\documentclass[reqno]{amsart}
\usepackage{amssymb,amscd,graphicx,stmaryrd,mathrsfs,bbm,accents}
\usepackage[all]{xy}
\usepackage{color}
\usepackage{url}
\usepackage
{hyperref}

\newtheorem{theorem}{Theorem}[section]
\newtheorem{proposition}[theorem]{Proposition} 
\newtheorem{corollary}[theorem]{Corollary}
\newtheorem{lemma}[theorem]{Lemma}
\newtheorem{remark}[theorem]{Remark}

\newtheorem{remark&definition}[theorem]{Remark and Definition}

\makeatletter
  
  \@addtoreset{equation}{section}
\makeatother

\begin{document}

\title[Matrix Liberation Process III]{Matrix Liberation Process III: \\
\small Unitary Brownian motion and Martingale analysis} 
\author{Yoshimichi UEDA}
\address{
Graduate School of Mathematics, Nagoya University, 
Furocho, Chikusaku, Nagoya, 464-8602, Japan
}
\email{ueda@math.nagoya-u.ac.jp}
\date{\today}
\thanks{This work was supported by Grants-in-Aid for Scientific Research Grant Numbers JP18H01122, JP24K06757, JP25H00593(PI: Benoit Collins).}

\maketitle
\begin{abstract}
We investigate the rate functions that emerge in our previous works towards large deviation principle for the matrix liberation process driven by the unitary Brownian motion as well as the unitary Brownian motion itself. Our approach is grounded in the viewpoint of the martingale problem. Specifically, we formulate and solve a  ``free martingale problem" within this framework, which provides a new perspective on the underlying stochastic structure. 
\end{abstract}

\allowdisplaybreaks{


\section{Introduction}

This work is a sequel to our previous papers \cite{Ueda:JOTP19, Ueda:CJM21} on the matrix liberation process, which we introduced as a natural random matrix model of the \emph{liberation process} introduced by Voiculescu \cite{Voiculescu:AdvMath99}. Specifically, in the present paper, we investigate the rate functions that emerged in our prior studies toward a large deviation principle for the matrix liberation process as well as for the unitary Brownian motion, primarily from the perspective of martingale analysis. Furthermore, we establish a weak large deviation lower bound for these processes.

In Part VI \cite{Voiculescu:AdvMath99} of his series of free entropy, Voiculescu introduced and developed the \emph{free mutual information} $i^*$, which plays the r\^{o}le of mutual information in free probability. Defined for tuples of subalgebras in a tracial $W^*$-probability space, $i^*$ was constructed  via the liberation process in the same spirit as the microstate-free free entropy (see \cite{Voiculescu:InventMath98}). A decade later, the \emph{orbital free entropy} $\chi_\mathrm{orb}$ was introduced by Hiai, Miyamoto and us \cite{HiaiMiyamotoUeda:IJM09} under certain constraints, following the same spirit as microstates free entropy (see \cite{Voiculescu:InventMath94}), and was then generalized to full generality by us \cite{Ueda:IUMJ14} (see also \cite{BianeDabrowski:AdvMath13}). 

Since the negative orbital free entropy $-\chi_\mathrm{orb}$ shares many properties with the free mutual information $i^*$, we conjecture that the orbital free entropy is a microstates counterpart of free mutual information. This naturally leads to the unification problem on free mutual information, asking whether these two quantities are proportional. This question is the mutual information equivalent of the celebrated unification problem on free entropy (see e.g., \cite{Voiculescu:BLMS02}). 

The unification problem on free entropy was solved completely in the one-dimensional case by Voiculescu \cite{Voiculescu:InventMath98} using ``commutative analysis". In the multi-dimensional setting, it was partially solved as a byproduct of the large deviation upper bound established by Biane--Capitaine--Guionnet \cite{BianeCapitaineGuionnet:InventMath03} for matrix Brownian motion via ``free stochastic analysis". By contrast, the unification problem on free mutual information has thus far been solved only for the case of two projections (see \cite{HiaiUeda:AIHP09,CollinsKemp:JFA14,IzumiUeda:NJM15,Hamdi:NMJ20,Hamdi:CAOPT18}), results which are intimately connected to the free Jacobi process introduced by Demni \cite{Demni:JTOP08}.    

In this series of works, we have focused on the methodology of Biane--Capitaine--Guionnet \cite{BianeCapitaineGuionnet:InventMath03}, which is centered on the level 3 large deviation principle. Although an important consequence in their work regarding free entropy was recently reproved by Jekel--Pi \cite{JekelPi:DocMath24} using a more elementary alternative method, we maintain our focus on the approach of \cite{BianeCapitaineGuionnet:InventMath03}. Our objective is twofold: not only to address the unification problem on free mutual information but also to further advance the field of ``free stochastic analysis" as a byproduct. 

Perhaps the most significant, albeit technical, contribution of this paper is the solution to a kind of ``free martingale problem", which we briefly explain below. Consider the universal unital $C^*$-algebra with indeterminates $x_j = x_j^*$, $j \geq 1$, and $u_i(t), u_i(t)^*$, $i=1,\dots,n$, $t \geq 0$, such that    
$\Vert x_j\Vert_\infty \leq R$ and $u_i(t)^* u_i(t) = 1 = u_i(t)u_i(t)^*$. Let $\varphi$ be a ``continuous" tracial state on the $C^*$-algebra. 
We will work in the non-commutative $L^2$-space $L^2(\varphi)$ that is defined to be the separation/completion of the $C^*$-algebra with respect to the $L^2$-seminorm $a \mapsto \varphi(a^* a)^{1/2}$. 
Our primary objective is as follows. Suppose that the rate function $I=I_{\sigma_0}^\mathrm{uBM}$ that we constructed in section 7 of \cite{Ueda:CJM21} is finite at $\varphi$ such that 
\begin{equation}\label{rate_ft_expression}
I(\varphi) = \frac{1}{2}\int_0^\infty \Vert \xi(t)\Vert_{L^2(\varphi)^n}^2\,dt 
\end{equation}
with self-adjoint $\xi(t) := (\xi_1(t),\dots,\xi_n(t)) \in L^2(\varphi)^n$ adapted to the ``filtration" determined by $t \mapsto u(t)$. Then, we want to derive the $n$-dimensional free stochastic differential equation (free SDE) 
\begin{equation}\label{freeSDE}
du(t) = \sqrt{-1}(db(t)+\xi(t)\,dt)u(t) - \frac{1}{2}u(t)\,dt,
\end{equation} 
or more precisely, 
\[
du_i(t) = \sqrt{-1}\,db_i(t)\,u_i(t) + \left(\sqrt{-1}\,\xi_i(t) - \frac{1}{2}\right)u_i(t)\,dt, \quad i=1,\dots,n
\]
in $L^2(\varphi)$ with an appropriate $n$-dimensional free Brownian motion $b(t)=(b_1(t),\dots,b_n(t))$ adapted to the same filteration. This insight was suggested from Borell's formula for a diffusion process due to Lehec \cite{Lehec:Proceedings17}, and moreover, $I(\varphi)=0$ implies $\xi=0$, which says that the $u(t)$ must be an $n$-dimensional left increment free unitary Brownian motion that is known to be determined by free SDE \eqref{freeSDE} with $\xi=0$. In this paper, we will solve this problem under an additional assumption on $\varphi$ or more precisely on $\xi$ in such a way that $\xi$ is bounded over finite intervals with respect to the ``$L^\infty$-norm" rather than the $L^2$-norm. 

Assumption \eqref{rate_ft_expression} will be rephrased as a kind of ``non-commutative weak partial differential equation", from which one must derive free SDE \eqref{freeSDE}. The same procedure, of course, appeared in Biane--Capitaine--Guionnet's work \cite{BianeCapitaineGuionnet:InventMath03}, where they dealt with a simpler free SDE such as  $dx(t) = db(t) + \xi(t)\,dt$ and solved it under some regularity assumptions on $\xi$ by providing a free probability analogue of L\'{e}vy's characterization of Brownian motion. Since L\'{e}vy's characterization can be regarded as a solution to the simplest martingale problem, we regard our problem as a kind of ``free martingale problem" and solve it by following the idea of the martingale problem based on It\^{o} calculus.   

Solving our martingale problem requires developing a kind of stochastic calculus based on non-commutative martingales. Recently, such a general theory focusing on free probability has been developed by Jekel--Kemp--Nikitopoulos \cite{JekelKempNikitopoulos:JFA26}. Their work partially motivated the present study and clarified the nature of the problem; however, their theory is too broad to be applied directly to our concrete problem. Thus, we must develop a more ``specific theory" tailored to our particular non-commutative martingales. This is carried out in the present paper by building upon existing results from Pisier--Xu's seminal work \cite{PisierXu:CMP97}, as well as an important work by Dabrowski \cite{Dabrowski:preprint10} that refined the L{\'e}vy-type characterization of free Brownian motion mentioned earlier.     

With the solution to our free martingale problem explained above, we will prove, following the procedure in \cite{BianeCapitaineGuionnet:InventMath03}, a large deviation lower bound for the $n$-dimensional (left-increment) Brownian motion on the unitary group $\mathrm{U}(N)$ with speed $N^2$. By the standard contraction principle for the large deviation principle (see e.g., \cite{DemboZeitouni:Book}), this result also provides, in principle, a large deviation lower bound for the matrix liberation process as well. Furthermore, we establish the almost sure convergence of the empirical distribution of an $n$-dimensional process on the unitary group $\mathrm{U}(N)$, defined by a stochastic differential equation (SDE) in the usual sense, which is analogous to \eqref{freeSDE} with a special kind of drift. 

As mentioned above, we will mainly work within a framework that captures the (free) unitary Brownian motion rather than the (matrix) liberation process. To this end, we will recall the content of Section 7 of \cite{Ueda:CJM21} in some detail in Section \ref{unitary_process}. Section \ref{study_on_rate_function} provides an alternative representation of the rate function. In Section \ref{a_free_martingale_problem} we solve the free martingale problem mentioned above and apply it to Brownian motion on $\mathrm{U}(N)$, including a large deviation lower bound for it in Section \ref{applications_to_unitary_BM}. In Section \ref{matrix_liberation_process} we apply the results obtained to the matrix liberation process. Finally, Appendix \ref{noncomm_L^2} provides a brief explanation of non-commutative $L^2$-spaces and affiliated operators, using formulations and notations that fit ``free stochastic analysis".   

\section*{Acknowledgements} 
We would like to thank Yoann Dabrowski for fruitful discussions at Fields Institute in June 2019. We also benefited from a seminar talk by Evangelos Nikitopoulos at Kyoto University in May 2025 and subsequent conversations with him. We are grateful to him for sharing his insights on non-commutative stochastic calculus and would also like to thank Benoit Collins for organizing the seminar. That opportunity motivated me and enabled me to resume this research.

Finally, we would like thank Gemini (an AI language model by Google) for its assistance in refining the English expression. 

\section{Preliminaries on unitary processes}\label{unitary_process}

While we adhere to the notation of \cite[section 7]{Ueda:CJM21}, we introduce several modifications to streamline the presentation in this paper. 

\subsection{Noncommutative coordinates} 

Let $C_R^*\langle x,u\rangle$ be the universal $C^*$-algebra generated by indeterminates $x = (x_j)_{j \geq 1}$ and $u(t) = (u_1(t),\dots,u_n(t))$, $t \geq 0$, subject to the relations $x_j=x_j^*$ and $u_i(t)^ *u_i(t) = 1 = u_i(t)u_i(t)^*$, along with norm constraints $\Vert x_j\Vert \leq R$. This $C^*$-algebra is regarded as the space of ``continuous" test functions for an $n$-dimensional unitary process $u(t) = (u_1(t),\dots,u_n(t))$ together with countably many bounded self-adjoint random variables $x=(x_j)_{j\geq1}$. 

For simplicity, we have slightly modified the notation from the previous $C^*_R \langle x_{\diamond},u_\bullet(\,\cdot\,)\rangle$. 
Note that we have simplified the notation to $C_R^*\langle x,u\rangle$, dropping the symbols $\diamond$ and $\bullet$ used in \cite{Ueda:JOTP19, Ueda:CJM21} for improved readability. 

Let $\mathbb{C}\langle x,u\rangle$ be the universal $*$-algebra generated by $x=(x_j)_{j\geq1}$ and $u(t) = (u_1(t),\dots,u_n(t))$, $t \geq 0$, which is naturally and faithfully embedded in $C_R^*\langle x,u\rangle$. 

\subsection{Derivations and cyclic derivatives} 

For each $t \geq 0$ and $i = 1,\dots.n$, we introduce a unique derivation $\delta_{t,i} : \mathbb{C}\langle x,u\rangle \to \mathbb{C}\langle x,u\rangle\otimes \mathbb{C}\langle x,u\rangle$ determined by
\begin{equation*}
\begin{aligned} 
\delta_{t,i}u_{i'} (t') &
:= 
\delta_{i,i'} \mathbf{1}_{[0,t']}(t)\, (u_i(t')\otimes 1)(\sqrt{-1}\,u_i(t)^*\otimes u_i(t)), \\ 
\delta_{t,i}u_{i'} (t')^* &
:= 
\delta_{i,i'} \mathbf{1}_{[0,t']}(t)\, (-\sqrt{-1}\,u_i(t)^*\otimes u_i(t))(1\otimes u_i(t')^*), \\ 
\delta_{t,i} x_j 
&:= 0. 
\end{aligned}
\end{equation*}

Let $\theta$ be the flip-multiplication map from $\mathbb{C}\langle x,u\rangle\otimes \mathbb{C}\langle x,u\rangle$ to $\mathbb{C}\langle x,u\rangle$ defined by $\theta(a\otimes b) = ba$. We set $\mathfrak{D}_{t,i} := \theta\circ\delta_{t,i} : \mathbb{C}\langle x,u\rangle \to \mathbb{C}\langle x,u\rangle$. Note that $\delta_{t,i}$ and $\mathfrak{D}_{t,i}$ were previously denoted by $\delta_t^{(i)}$ and $\mathfrak{D}_t^{(i)}$, respectively. 

\subsection{Non-commutative $L^2$- and $L^\infty$-spaces}

Fix a tracial state $\varphi$ on $C^*_R\langle x,u\rangle$ for the time being. The $L^2$-space $L^2(\varphi)$ associated with $\varphi$ is available as mentioned in the introduction. 

By construction, we have a canonical linear map $C^*_R\langle x,u\rangle \to L^2(\varphi)$. Although this linear map may not be  injective, we still denote the image of $a \in C^*_R\langle x,u\rangle$ in $L^2(\varphi)$ by the same symbol $a$, following the usual convention in measure theory. 

Thanks to the trace property the adjoint operation $a \mapsto a^*$ on $C^*_R\langle x,u\rangle$ extends to $L^2(\varphi)$ isometrically. 
Furthermore, thanks to the trace property again, we can naturally endow $L^2(\varphi)$ with a bimodule structure over $C^*_R\langle x,u\rangle$. This bimodule structure induces an operation $\sharp$, which is a map from $(C_R^*\langle x,u\rangle\otimes C_R^*\langle x,u\rangle) \times L^2(\varphi)$ into $L^2(\varphi)$ extending:  
\[
(a\otimes b)\,\sharp\,\xi = a\xi b, \qquad a,b \in C^*_R\langle x,u\rangle,\ \xi \in L^2(\varphi), 
\]
where $C_R^*\langle x,u\rangle\otimes C_R^*\langle x,u\rangle$ denotes the algebraic tensor product. Note that $(a,b) \mapsto (a\otimes b)\,\sharp\,\xi$ is ``multiplicative" in the variable $a$ and ``anti-multiplicative" in the variable $b$. 

The tracial state $\varphi$ extends to $L^2(\varphi)$ in such a way that $\varphi(\xi) = (\xi|1)_{L^2(\varphi)}$ for all $\xi \in L^2(\varphi)$. 
For any $\xi \in L^2(\varphi)$, define the $L^\infty$-norm as:  
\[
\Vert \xi \Vert_{L^\infty(\varphi)} := 
\sup\{ \Vert \xi a\Vert_{L^2(\varphi)}; 
a \in C^*_R\langle x,u \rangle,\ 
\Vert a \Vert_{L^2(\varphi)} := \varphi(a^* a)^{1/2} = \varphi(aa^*)^{1/2} \leq 1 \}.
\] 
Although $\Vert\xi\Vert_{L^\infty(\varphi)}$ may be $+\infty$, the collection $L^\infty(\varphi)$ of all $\xi \in L^2(\varphi)$ with $\Vert \xi\Vert_{L^\infty(\varphi)} < +\infty$ contains the image of $C^*_R\langle x,u\rangle$. This image is dense in $L^2(\varphi)$, and $L^\infty(\varphi)$ itself forms a $W^*$-algebra (i.e., a space-free von Neumann algebra). Moreover, the $C^*$-norm $\Vert a \Vert$ of $a \in C^*_R\langle x,u\rangle$ is an upper bound for the $L^\infty$-norm; specifically, $\Vert a\Vert_{L^\infty(\varphi)} \leq \Vert a\Vert$ holds. 

For any $\xi \in L^\infty(\varphi)$, the trace property allows us to rigorously define the left-multiplication $\eta \mapsto \xi\eta$ and right-multiplication $\eta \mapsto \eta\xi$ as bounded operators on $L^2(\varphi)$, with their operator norms coinciding $\Vert\xi\Vert_{L^\infty(\varphi)}$. In the theory of operator algebras, using the concepts of left and right bounded vectors,  $L^\infty(\varphi)$ is identified with the von Neumann algebra generated by the left action $C^*_R\langle x,u\rangle$ on $L^2(\varphi)$. 

For each $t \geq 0$, let $\mathbb{C}\langle x,u\rangle_t$ be the unital $*$-subalgebra of $\mathbb{C}\langle x,u\rangle$ generated by $x_j$ and $u_i(s)$ for $s \leq t$, and let $C^*_R\langle x,u\rangle_t$ be its norm closure. We denote by $L^2(\varphi)_t$ the closure of the image of $C^*_R\langle x,u\rangle_t$ in $L^2(\varphi)$, and by $E_t^\varphi$ the orthogonal projection from $L^2(\varphi)$ onto $L^2(\varphi)_t$. 

Notably, $L^\infty(\varphi)_t := L^\infty(\varphi) \cap L^2(\varphi)_t$ is a $W^*$-subalgebra of $L^\infty(\varphi)$ and the $E_t^\varphi$ induces a unqiue $\varphi$-preserving (normal) conditional expectation from $L^\infty(\varphi)$ onto $L^\infty(\varphi)_t$. The \emph{filtration} determined by $t \mapsto u(t)$ under $\varphi$ refers to the increasing family of $W^*$-subalgebras $L^\infty(\varphi)_t $ (or equivalently, the increasing family of $L^2$-spaces $L^2(\varphi)_t$) for $t \geq 0$. 

While these concepts are standard within operator algebras, we provide a brief explanation involving affiliated operators in Appendix \ref{noncomm_L^2} for the reader's convenience.  

\subsection{Continuous tracial states}

A tracial state $\varphi$ on $C_R^*\langle x,u\rangle$ is said to be \emph{continuous}, if $t \mapsto u(t)$ is continuous on $L^2(\varphi)^n$ under left-multiplication in the strong operator topology. This is equivalent to the condition that $(t_1,\dots,t_m) \mapsto \varphi(w(t_1,\dots,t_m))$ is continuous for every $m \in \mathbb{N}$ and any word $w(t_1,\dots,t_m) = w_1(t_1)\cdots w_m(t_m)$, where each $w_k(t_k)$ is either $x_j$, $u_i(t_k)$, or $u_i(t_k)^*$ (see \cite[Lemma 2.1]{Ueda:JOTP19} for the proof, though a trivial modification is necessary). Let $TS^c(C_R^*\langle x,u\rangle)$ be the set of all continuous tracial states, which forms a complete metric space with the metric:  
\[
d(\varphi_1,\varphi_2) := 
\sum_{\ell = 1}^\infty\sum_{m = 1}^\infty \frac{1}{2^\ell (2R)^m}\max_{w \in \mathcal{W}_{\leq m}} \sup_{(t_1,\dots,t_m) \in [0,\ell]^m} |(\varphi_1-\varphi_2)(w(t_1,\dots,t_m))|,
\]
where $\mathcal{W}_{\leq m}$ denotes the set of all words as above with length not greater than $m$. See \cite[section 1, Remark on Part I]{Ueda:CJM21}, though a trivial modification is necessary here as well. 

A typical example of continuous tracial state is given by an $n$-dimensional Brownian motion $U_N(t) = (U_{N,1}(t),\dots, U_{N,n}(t))$ on the unitary group $\mathrm{U}(N)$ of rank $N$, together with $N\times N$ deterministic matrices $X_N = (X_{N,j})_{j\in\mathbb{N}}$ such that the operator norm $\Vert X_{N,j}\Vert \leq R$. This is constructed from an $n$-dimensional $N\times N$ self-adjoint matrix Brownian motion $H_N(t) = (H_{N,1}(t),\dots,H_{N,n}(t))$ as the unique solution to the stochastic differential equation (SDE) 
\begin{equation*} 
dU_{N,i}(t) = \sqrt{-1}\,dH_{N,i}(t) U_{N,i}(t) - \frac{1}{2}U_{N,i}(t)\,dt, \qquad U_{N,i}(0) = I.  
\end{equation*} 
See \cite[section 1]{Ueda:JOTP19}.  (Note that we use the symbol $U_{N,i}(t)$, $X_{N,j}$ and $H_{N,i}(t)$ instead of $U_N^{(i)}(t)$, $\xi_j(N)$ and $H_N^{(i)}(t)$, respectively, departing from the notation in the previous papers \cite{Ueda:JOTP19,Ueda:CJM21}.)  Specifically, we obtain a ``random" tracial state $\varphi_N = \varphi_{(X_N,U_N)} \in TS^c(C^*_R\langle x,u\rangle)$ from them as follows. Let $\pi_N = \pi_{(X_N,U_N)}$ be the unique ``random" unital $*$-homomorphism from $C^*_R\langle x,u\rangle$ into the algebra of $N\times N$ matrices $M_N(\mathbb{C})$ mapping each $x_j$ to $X_{N,j}$ and each $u_i(t)$ to $U_{N,i}(t)$. Then, the desired $\varphi_N$ is defined as the composition $\mathrm{tr}_N\circ\pi_N$, where $\mathrm{tr}_N := N^{-1}\mathrm{Tr}_N$ is the normalized trace on $M_N(\mathbb{C})$. 

Our desired large deviation principle concerns the sequence of random variables $\varphi_N$ taking values in $TS^c(C^*_R\langle x,u\rangle)$. Since a (continuous) tracial state is naturally regarded as the empirical probability distributions of a continuous unitary process, this large deviation principle should be regarded as being of level 2 in the usual sense. When $X_N$ has a limit $*$-joint distribution $\sigma_0$ as $N\to\infty$, we have already obtained, in \cite[section 7]{Ueda:CJM21}, a large deviation upper bound for the sequence of probability measures $\mathbb{P}(\varphi_N \in \,\cdot\,)$ with speed $N^2$ and the good rate function $I = I_{\sigma_0}^\mathrm{uBM} : TS^c(C^*_R\langle x,u\rangle) \to [0,+\infty]$ defined as follows. 

\subsection{Rate function}

We first add new indeterminates $\widetilde{u}(t) = (\widetilde{u}_1(t),\dots,\widetilde{u}_n(t))$ representing an $n$-dimensional unitary process (i.e., $\widetilde{u}_i(t)^*\widetilde{u}_i(t) = 1 = \widetilde{u}_i(t)\widetilde{u}_i(t)^*$ for all $t\geq0$) to $C^*_R\langle x,u\rangle \supset \mathbb{C}\langle x,u\rangle$,. This yields larger universal ($C^*$- and $*$-)algebras $C^*_R\langle x,u,\widetilde{u}\rangle \supset \mathbb{C}\langle x,u,\widetilde{u}\rangle$, which naturally contains $C^*_R\langle x,u\rangle \supset \mathbb{C}\langle x,u\rangle$ as subalgebras. (Note that in \cite{Ueda:CJM21}, we used the notation $v(t) = (v_1(t),\dots,v_n(t))$ in place of the current $\widetilde{u}(t) = (\widetilde{u}_1(t),\dots,\widetilde{u}_n(t))$.)  

For each $t \geq 0$ there exists a unique unital $*$-homomorphism $\Pi^t : C^*_R\langle x,u\rangle \to C^*_R\langle x,u,\widetilde{u}\rangle$ mapping each $u_i(t')$ to: 
\begin{equation*}
u_i^t(t') := \widetilde{u}_i((t'-t)\vee 0)u_i(t\wedge t').
\end{equation*}  

Now, we fix $\varphi \in TS^c(C^*_R\langle x,u\rangle)$, and then define the desired rate function $I(\varphi)$. 

By taking the reduced free product for unital $C^*$-algebras (see e.g.\ \cite[Chapter 8]{FieldsInstMono13}), we extend the $\varphi$ to a unique tracial state $\widetilde{\varphi}$ on $C^*_R\langle x,u,\widetilde{u}\rangle$ such that:  
\begin{itemize}   
\item[(e.i)] The restriction of $\widetilde{\varphi}$ to $C^*_R\langle x,u\rangle$ is exactly $\varphi$.  
\item[(e.ii)] The distribution of $\widetilde{u}(t) = (\widetilde{u}_1(t),\dots,\widetilde{u}_n(t))$ under $\widetilde{\varphi}$ is that of an $n$-dimensional, left-increment free unitary Brownian motion (i.e., a freely independent $n$-tuple of left-increment, free unitary Brownian motions). 
\item[(e.iii)] The process $\widetilde{u}(t)$ is $*$-freely independent of $C^*_R\langle x,u\rangle$ under $\widetilde{\varphi}$.   
\end{itemize}
We set $\varphi^t := \widetilde{\varphi}\circ\Pi^t$, which clearly belongs to $TS^c(C^*_R\langle x,u\rangle)$ for every $t \geq 0$. This $\varphi^t$ plays the r\^{o}le of the empirical probability distribution of the unitary process distributed under $\varphi$ up to time $t$, and  following the law of a left increment free unitary Brownian motion after time $t$. Similarly, by taking another reduced free product, we construct $\sigma_0^\mathrm{frBM} \in TS^c(C^*_R\langle x,u\rangle)$ such that:  
\begin{itemize} 
\item[(f.i)] The joint distribution of $x=(x_j)_{j\geq1}$ is $\sigma_0$, 
\item[(f.ii)] The distribution of $u(t) = (u_1(t),\dots,u_n(t))$ under $\widetilde{\varphi}$ is that of an $n$-dimensional, left-increment free unitary Brownian motion.  
\item[(f.iii)] The variables $x_j$ and the process $u(t)$ are $*$-freely independent under $\sigma_0^{\mathrm{frBM}}$. 
\end{itemize} 
By the left-increment property, we have $(\sigma_0^\mathrm{frBM})^t = \sigma_0^\mathrm{frBM}$ for all $t \geq 0$. Using these notations, we define:  
\[
I(\varphi;a,T) := \varphi^T(a) - \sigma_0^\mathrm{frBM}(a) - \frac{1}{2} \int_0^T \sum_{i=1}^n \Vert E^{\varphi^t}_t(\mathfrak{D}_{t,i}a)\Vert_{L^2(\varphi^t)}^2\,dt
\]
for any $a = a^* \in \mathbb{C}\langle x,u\rangle$ and $T \geq 0$ (see Corollary \ref{C3.3} for its well-definedness), and then:  
\[
I(\varphi) := \sup\big\{ I(\varphi;a,T);\ T > 0,\ a = a^* \in \mathbb{C}\langle x,u\rangle\big\}. 
\]
If $\varphi$ does not agree with $\sigma_0$ on the polynomials in $x_j$, then $I(\varphi)$ must be $+\infty$ (see e.g., \cite[Lemma 5.3]{Ueda:JOTP19}).  

\section{Study on Rate Function}\label{study_on_rate_function} 

The primary objective of this section is to derive a specific expression of $\varphi \in TS^c(C^*_R\langle x, u\rangle)$ with $I(\varphi) < +\infty$. To achieve this, we will reformulate the rate function $I(\varphi)$ into a more tractable expression. 

\begin{lemma}\label{L3.1} 
For any word $w$ in the variables $x_j$ and $u_i(t), u_i(t)^*$, we have:  
\[
\mathfrak{D}_{t,i}w = \sqrt{-1}\left(\sum_{\substack{w = w_1 u_i(t') w_2 \\ t \leq t'}} u_i(t) w_2 w_1 u_i(t')u_i(t)^* - \sum_{\substack{w = w_1 u_i(t')^* w_2 \\ t \leq t'}} u_i(t) u_i(t')^* w_2 w_1 u_i(t)^*\right)
\]
and 
\[
\Pi^t(\mathfrak{D}_{t,i}w) = \sqrt{-1}\left(\sum_{\substack{w = w_1 u_i(t') w_2 \\ t \leq t'}} u_i(t) \Pi^t(w_2 w_1) v_i(t'-t) - \sum_{\substack{w = w_1 u_i(t')^* w_2 \\ t \leq t'}} v_i(t'-t)^* \Pi^t(w_2 w_1) u_i(t)^*\right).
\]
If $w$ does not contain any letters $u_i(t'), u_i(t')^*$ with $t' \geq t$, then $\mathfrak{D}_{t,i}w = 0$.  
\end{lemma}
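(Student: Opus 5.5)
The plan is a direct computation from the Leibniz rule for $\delta_{t,i}$, followed by applying $\theta$ and then the homomorphism $\Pi^t$. Throughout I read $v_i$ in the second display as the old notation $\widetilde{u}_i$ of \cite{Ueda:CJM21}.

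First I recall how the derivation acts on products. The tensor product $\mathbb{C}\langle x,u\rangle\otimes\mathbb{C}\langle x,u\rangle$ is a bimodule via $a(b\otimes c)d = ab\otimes cd$. Since $\delta_{t,i}$ is a derivation into this bimodule, for a word $w=\ell_1\cdots\ell_m$ in the letters $x_j$, $u_{i'}(t')$, $u_{i'}(t')^*$ we have
\[
\delta_{t,i}w=\sum_{k=1}^m (\ell_1\cdots\ell_{k-1}\otimes 1)\,\delta_{t,i}(\ell_k)\,(1\otimes \ell_{k+1}\cdots\ell_m).
\]
Well-definedness of $\delta_{t,i}$ on the universal $*$-algebra is presupposed in Section 2, since the prescribed values are compatible with $u^*u=1=uu^*$. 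For instance, $\delta_{t,i}(u_i(t')^*u_i(t'))$ equals
\[
-\sqrt{-1}\,u_i(t)^*\otimes u_i(t)u_i(t')^*u_i(t') \;+\; \sqrt{-1}\,u_i(t')^*u_i(t')u_i(t)^*\otimes u_i(t),
\]
and this is $0$. So I only need to evaluate the sum.

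By the definition of $\delta_{t,i}$, a letter contributes only if it is $u_i(t')$ or $u_i(t')^*$ with $t\le t'$; the letters $x_j$ and the $u_{i'}$ with $i'\neq i$ give zero. Write $w=w_1\ell w_2$.

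For $\ell=u_i(t')$, the term is $\sqrt{-1}\,w_1u_i(t')u_i(t)^*\otimes u_i(t)w_2$. Applying $\theta(a\otimes b)=ba$ turns it into $\sqrt{-1}\,u_i(t)w_2w_1u_i(t')u_i(t)^*$.

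For $\ell=u_i(t')^*$, the term is $-\sqrt{-1}\,w_1u_i(t)^*\otimes u_i(t)u_i(t')^*w_2$. Applying $\theta$ gives $-\sqrt{-1}\,u_i(t)u_i(t')^*w_2w_1u_i(t)^*$.

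Summing these terms gives the first formula. The final assertion is immediate: if no letter $u_i(t')^{(*)}$ with $t'\ge t$ occurs, every summand vanishes.

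For the second formula I apply the unital $*$-homomorphism $\Pi^t$ term by term. Using $\Pi^t(u_i(t'))=\widetilde{u}_i(t'-t)u_i(t)$ for $t'\ge t$, we get
\[
\Pi^t\bigl(u_i(t')u_i(t)^*\bigr)=\widetilde{u}_i(t'-t)\,u_i(t)\,\Pi^t(u_i(t))^*.
\]
This reduces to $\widetilde{u}_i(t'-t)$ once $\Pi^t(u_i(t))=\widetilde{u}_i(0)u_i(t)$ is identified with $u_i(t)$. Similarly $\Pi^t(u_i(t)u_i(t')^*)$ becomes $\widetilde{u}_i(t'-t)^*$, and $\Pi^t(u_i(t))$, $\Pi^t(u_i(t)^*)$ become $u_i(t)$, $u_i(t)^*$.

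The only delicate point is this identification $\widetilde{u}_i(0)=1$. It is not an algebraic relation in $C^*_R\langle x,u,\widetilde{u}\rangle$, but it holds in $L^2(\widetilde{\varphi})$ (indeed in $L^\infty(\widetilde{\varphi})$), because $\widetilde{u}$ is a free unitary Brownian motion started at $1$ by (e.ii). I will therefore state the second identity as holding in $L^2(\widetilde{\varphi})$, which is the only place it is used, namely inside $\Vert E_t^{\varphi^t}(\cdot)\Vert_{L^2(\varphi^t)}$. Alternatively one can adopt the convention $\widetilde{u}_i(0)=1$ in the universal algebra. With that understood, the computation is routine bookkeeping of the orders of $w_1,w_2$ after the flip.
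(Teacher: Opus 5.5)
Your proposal is correct and is the same direct computation the paper leaves to the reader: expand $\delta_{t,i}w$ by the Leibniz rule, apply the flip $\theta$, then push the result through the homomorphism $\Pi^t$, reading $v_i$ as $\widetilde{u}_i$. Your caveat is also correct and fills a gap the paper leaves implicit: $\Pi^t(u_i(t))=\widetilde{u}_i(0)u_i(t)$ reduces to $u_i(t)$ only after identifying $\widetilde{u}_i(0)=1$, which holds in $L^2(\widetilde{\varphi})$ by (e.ii) or can be imposed as a convention.
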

\begin{proof}
This lemma follows from a straightforward, albeit tedious, calculation. We leave the details to the reader. 
\end{proof}

Let $\varphi \in TS^c(C^*_R\langle x,u\rangle)$ be arbitrarily given, and consider its extension $\widetilde{\varphi}$ to $C_R^*\langle x,u,\widetilde{u}\rangle$ as described in Subsection 2.5. By replacing $(C^*_R\langle x, u\rangle,\varphi)$ with $(C^*_R\langle x,u,\widetilde{u}\rangle,\widetilde{\varphi})$, we obtain the $L^2$- and the $L^\infty$-spaces $L^2(\widetilde{\varphi}) \supseteq  L^\infty(\widetilde{\varphi})$ as in Subsection 2.3. We have the following natural inclusions: 
\[
\vcenter{
\xymatrix{
L^2(\varphi)\ \ar@{^{(}->}[r] \ar@{}[d]|{\bigcup} &  L^2(\widetilde{\varphi}) \ar@{}[d]|{\bigcup} \\
L^\infty(\varphi)\ \ar@{^{(}->}[r] & L^\infty(\widetilde{\varphi}) \ar@{}[lu]|{\circlearrowleft}\\
C^*_R\langle x,u\rangle \ar[u] \ar@{}[r]|*{\quad \subset\quad\ } & C^*_R\langle x,u,\widetilde{u}\rangle, \ar[u] \ar@{}[lu]|{\circlearrowleft} 
}
}
\]
where two horizontal arrows represent natural embeddings with closed range, and two bottom vertical arrows denote  the canonical linear maps (which are not necessarily injective). Let $E_\infty^{\widetilde{\varphi}}$ be the orthogonal projection from $L^2(\widetilde{\varphi})$ onto $L^2(\varphi)$ ($= L^\infty(\varphi)_\infty$), which induces a unique $\widetilde{\varphi}$-preserving (normal) conditional expectation from $L^\infty(\widetilde{\varphi})$ onto $L^\infty(\varphi)$. 

\begin{lemma}\label{L3.2} 
For every $a \in \mathbb{C}\langle x,u\rangle$ and each $i=1,\dots,n$, the following identity holds: 
\[
E_t^{\varphi^t}(\mathfrak{D}_{t,i}a) = E_\infty^{\widetilde{\varphi}}(\Pi^t(\mathfrak{D}_{t,i}a)).
\] 
\end{lemma}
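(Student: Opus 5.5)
The plan is to put both sides of the identity in the common Hilbert space $L^2(\widetilde{\varphi})$ and then reduce everything to one freeness statement.

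\emph{Identifying the two sides.} Since $\widetilde{u}(0)$ has the distribution of a free unitary Brownian motion at time $0$, we have $\widetilde{u}_i(0)=1$ in $L^2(\widetilde{\varphi})$. Hence $\Pi^t(u_i(t'))=u_i(t')$ in $L^2(\widetilde{\varphi})$ for every $t'\le t$, and $\Pi^t$ acts as the identity on $C^*_R\langle x,u\rangle_t$ modulo null vectors. Consequently $\varphi^t=\widetilde{\varphi}\circ\Pi^t$ agrees with $\varphi$ on $C^*_R\langle x,u\rangle_t$. The map $b\mapsto b$ therefore induces a unitary from $L^2(\varphi^t)_t$ onto $L^2(\varphi)_t\subseteq L^2(\varphi)\subseteq L^2(\widetilde{\varphi})$, and the identity in the statement is to be read through this identification.

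\emph{Reduction to a projection statement.} For $b\in\mathbb{C}\langle x,u\rangle_t$ and $a\in\mathbb{C}\langle x,u\rangle$,
\[
(\mathfrak{D}_{t,i}a\,|\,b)_{L^2(\varphi^t)}=\widetilde{\varphi}\bigl(\Pi^t(b)^*\,\Pi^t(\mathfrak{D}_{t,i}a)\bigr)=(\Pi^t(\mathfrak{D}_{t,i}a)\,|\,b)_{L^2(\widetilde{\varphi})}.
\]
By density, $E_t^{\varphi^t}(\mathfrak{D}_{t,i}a)$ thus corresponds to the orthogonal projection of $\Pi^t(\mathfrak{D}_{t,i}a)$ onto $L^2(\varphi)_t$ inside $L^2(\widetilde{\varphi})$. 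This projection equals $E_t\circ E_\infty^{\widetilde{\varphi}}$, because $L^2(\varphi)_t\subseteq L^2(\varphi)$. So the lemma reduces to the following claim:
\[
E_\infty^{\widetilde{\varphi}}(\Pi^t(\mathfrak{D}_{t,i}a))\in L^2(\varphi)_t .
\]

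\emph{Structure of $\Pi^t(\mathfrak{D}_{t,i}a)$.} By the second formula of Lemma \ref{L3.1}, and since $\Pi^t$ sends every letter into the $*$-algebra generated by $\mathcal{A}_t:=\mathbb{C}\langle x,u\rangle_t$ and the $\widetilde{u}_i(s)$, $s\ge0$, the element $\Pi^t(\mathfrak{D}_{t,i}a)$ is a polynomial in $\mathcal{A}_t$ and $\mathcal{B}:=\mathbb{C}\langle\widetilde{u}\rangle$. Here $\mathcal{B}$ is $*$-free from $\mathcal{A}:=\mathbb{C}\langle x,u\rangle\supseteq\mathcal{A}_t$ by (e.iii). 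It therefore suffices to prove: if $\mathcal{B}$ is free from $\mathcal{A}$ and $\mathcal{A}_t\subseteq\mathcal{A}$, then $E_{\mathcal{A}}$ maps $\mathrm{alg}(\mathcal{A}_t,\mathcal{B})$ into the $L^2$-closure of $\mathcal{A}_t$, and in fact coincides there with $E_{\mathcal{A}_t}$.

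\emph{Proof of the freeness claim, by induction on the word length.} Centre the $\mathcal{B}$-letters and write an element as a sum of words $a_0b_1a_1\cdots b_ka_k$ with $a_j\in\mathcal{A}_t$ and $\widetilde{\varphi}(b_j)=0$. Since every element of the $\mathcal{A}_t$-bimodule generated by such words is again of this form, the claim follows once we show that
\[
\widetilde{\varphi}\bigl(c\,a_0b_1a_1\cdots b_ka_k\bigr)=\widetilde{\varphi}\bigl(E_{\mathcal{A}_t}(c)\,a_0b_1\cdots b_ka_k\bigr)\qquad (c\in\mathcal{A},\ k\ge1).
\]
For this, split each interior $a_j$ ($1\le j\le k-1$) as $a_j=\varphi(a_j)+\mathring a_j$. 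Terms containing $\varphi(a_j)$ merge two neighbouring $b$'s and have smaller length, so they are handled by the induction hypothesis. In the remaining term, the product $a_kca_0$ (using traciality) is split as $\varphi(a_kca_0)+$centred part. The centred part gives an alternating centred word, which vanishes by freeness. The scalar part involves $c$ only through $\varphi(a_kca_0)=\varphi(E_{\mathcal{A}_t}(c)\,a_0a_k)$, so replacing $c$ by $E_{\mathcal{A}_t}(c)$ leaves it unchanged. The main obstacle is carrying out this bookkeeping cleanly; alternatively one could cite the standard fact that $\mathrm{alg}(\mathcal{A}_t,\mathcal{B})$ and $\mathcal{A}$ are free with amalgamation over $\mathcal{A}_t$. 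Finally, pass from polynomials to the $L^2$-closures by continuity of the orthogonal projections.
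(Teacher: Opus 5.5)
Your proof is correct. Its first half is the paper's argument: the paper also pairs both sides with $b\in C^*_R\langle x,u\rangle_t$, and uses $\Pi^t(b^*)=b^*$ together with $\varphi^t=\widetilde{\varphi}\circ\Pi^t$ to obtain $\varphi^t((\mathfrak{D}_{t,i}a)b^*)$ on both sides. You are slightly more careful in noting that $\Pi^t(b)=b$ holds only in $L^2(\widetilde{\varphi})$, via $\widetilde{u}_i(0)=1$.

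The difference is what comes next. The paper stops there, saying that testing against $b\in C^*_R\langle x,u\rangle_t$ suffices. You observe, rightly, that this only identifies $E_t^{\varphi^t}(\mathfrak{D}_{t,i}a)$ with the projection of $E_\infty^{\widetilde{\varphi}}(\Pi^t(\mathfrak{D}_{t,i}a))$ onto $L^2(\varphi)_t$. One also needs $E_\infty^{\widetilde{\varphi}}(\Pi^t(\mathfrak{D}_{t,i}a))\in L^2(\varphi)_t$. The paper records that membership only afterwards, in Corollary~\ref{C3.3}, by citing the Mingo--Speicher computation of conditional expectations onto one of two free subalgebras.

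Your induction on the number of centred $\mathcal{B}$-letters (or the amalgamated freeness over $\mathcal{A}_t$ you mention) derives this from (e.iii) alone, so your version is self-contained where the paper's is not. If you record $E_{\mathcal{A}}$ of each word explicitly, the same recursion gives more. For instance, the fully centred word gives $\widetilde{\varphi}(b_1\mathring a_1\cdots\mathring a_{k-1}b_k)\,a_0a_k$. This yields the sharper statement of Corollary~\ref{C3.3}, that the value lies in the algebra $\mathbb{C}\langle x,u\rangle_t$ itself.

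Two small repairs:
\begin{itemize}
\item Start the induction at the trivial case $k=0$, since merging adjacent $b$'s can produce words with no $\mathcal{B}$-letter.
\item When you apply freeness to the word containing $E_{\mathcal{A}_t}(c)$, note that this element lies in $L^\infty(\varphi)_t$ rather than in $\mathcal{A}$. Freeness passes to the generated von Neumann algebras, so the argument still goes through, but this should be said.
\end{itemize}
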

\begin{proof}
Since the restriction of $\varphi^t$ to $C^*_R\langle x,u\rangle_t$ coincides with the original $\varphi$, we naturally have $L^2(\varphi^t)_t = L^2(\varphi)_t \subset L^2(\varphi)$. In particular, $E_t^{\varphi^t}(\mathfrak{D}_{t,i}a)$ belongs to $L^2(\widetilde{\varphi})$. 

Owing to the natural inclusions above, it suffices to prove that 
\[
(E_t^{\varphi^t}(\mathfrak{D}_{t,i}a)|b)_{L^2(\widetilde{\varphi})} 
=
(E_\infty^{\widetilde{\varphi}}(\Pi^t(\mathfrak{D}_{t,i}a))|b) _{L^2(\widetilde{\varphi})} 
\]
for all $b\in C^*_R\langle x,u\rangle_t$. Since $b \in L^2(\varphi^t)_t = L^2(\varphi)_t \subset L^2(\varphi) \subset L^2(\widetilde{\varphi})$, the left-hand side reduces to
\[
(E_t^{\varphi^t}(\mathfrak{D}_{t,i}a)|b)_{L^2(\varphi^t)} 
= 
(\mathfrak{D}_{t,i}a|b)_{L^2(\varphi^t)} 
=
\varphi^t((\mathfrak{D}_{t,i}a)b^*). 
\]
Conversely, the right-hand side becomes 
\[
(\Pi^t(\mathfrak{D}_{t,i}a)|b)_{L^2(\widetilde{\varphi})}\, 
\big(= \widetilde{\varphi}(\Pi^t(\mathfrak{D}_{t,i}a)b^*)\big) 
= 
\widetilde{\varphi}(\Pi^t((\mathfrak{D}_{t,i}a)b^*)) 
= 
\varphi^t((\mathfrak{D}_{t,i}a)b^*),
\]
where we used $\Pi^t(b^*) = b^*$ by definition. Hence we are done. 
\end{proof}

\begin{corollary}\label{C3.3} 
For every $a \in \mathbb{C}\langle x,u\rangle$ and each $i=1,\dots,n$, the map $t \mapsto E_t^{\varphi^t}(\mathfrak{D}_{t,i}a) = E_\infty^{\widetilde{\varphi}}(\Pi^t(\mathfrak{D}_{t,i}a))$ is a compactly supported, bounded, and piecewise strongly continuous function taking values in $L^\infty(\varphi)$. Consequently, the norm function 
\[
t \mapsto \Vert E_t^{\varphi^t}(\mathfrak{D}_{t,i}a)\Vert_{L^2(\varphi^t)} = \Vert E_\infty^{\widetilde{\varphi}}(\Pi^t(\mathfrak{D}_{t,i}a))\Vert_{L^2(\varphi)}
\]
is bounded, piecewise continuous. Furthermore, $E_t^{\varphi^t}(\mathfrak{D}_{t,i}a) = E_\infty^{\widetilde{\varphi}}(\Pi^t(\mathfrak{D}_{t,i}a))$ is contained in $\mathbb{C}\langle x,u\rangle_t$ for every $t \geq 0$
\end{corollary}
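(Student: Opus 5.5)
By linearity it suffices to treat a single word $w$ in $x_j$, $u_i(t')$, $u_i(t')^*$. The plan is to compute $E_\infty^{\widetilde{\varphi}}(\Pi^t(\mathfrak{D}_{t,i}w))$ explicitly using Lemma \ref{L3.1} and freeness. Then the regularity in $t$, the boundedness and the support can be read off from the resulting formula.

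Let $T_0$ be the maximal time appearing in $w$. By the last assertion of Lemma \ref{L3.1}, $\mathfrak{D}_{t,i}w=0$ for $t>T_0$, so the map has compact support in $[0,T_0]$. Boundedness follows from $\Vert E_\infty^{\widetilde{\varphi}}(\eta)\Vert_{L^\infty}\le\Vert\eta\Vert_{L^\infty(\widetilde\varphi)}\le\Vert\eta\Vert$ together with the fact that Lemma \ref{L3.1} expresses $\Pi^t(\mathfrak{D}_{t,i}w)$ as a sum of at most $|w|$ terms. Each term is a product of $u_i(t)$, $u_i(t)^*$, $\widetilde u_i(\cdot)$ and images of subwords, hence of $C^*$-norm at most $R^{|w|}$. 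This gives the uniform bound $|w|R^{|w|}$.

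For the final claim, fix $t$ and look at a term from Lemma \ref{L3.1}. For instance, $u_i(t)\Pi^t(w_2w_1)\widetilde u_i(t'-t)$ (reading $v_i$ as $\widetilde u_i$) is a word whose letters are either in $\mathbb{C}\langle x,u\rangle_t$ (namely $x_j$, $u_k(s)^{\pm}$ with $s\le t$, and the factors $u_k(t)^{\pm}$) or are increments $\widetilde u_k(s)^{\pm}$. Since the $\widetilde u$'s are $*$-free from $C^*_R\langle x,u\rangle$ and form a free unitary Brownian motion with known moments, the conditional expectation onto $L^\infty(\varphi)$ of such an alternating product is a polynomial in the $\mathbb{C}\langle x,u\rangle_t$-letters. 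The coefficients are computed via the standard free-product formula (free cumulants, or recursive centering). Those coefficients are polynomials in moments of $\widetilde u_k(s)$, which are explicit continuous functions of the times $s=(t''-t)\vee0$ (e.g.\ $e^{-s/2}$ times polynomials in $s$). So $E_\infty^{\widetilde\varphi}(\Pi^t(\mathfrak{D}_{t,i}w))\in\mathbb{C}\langle x,u\rangle_t$. More precisely, on each interval between consecutive distinct times $t'$ in $w$ it equals a finite sum $\sum_k c_k(t)\,m_k(t)$. Here the $c_k$ are continuous scalar functions and the $m_k(t)$ are words in $x_j$, $u(s)$ with $s\le t$. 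The time arguments of the $m_k(t)$ are either fixed times $t'\le t$ or the moving time $t$ itself, because the word's letters with $t'<t$ are unchanged and the letters with $t'\ge t$ are replaced by $u_k(t)$.

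Strong continuity on each such interval then follows from continuity of $\varphi$. Left multiplication by $u_k(s)$ is strongly continuous in $s$, so products of such words are strongly continuous in $t$, since they are bounded and strong continuity is preserved under products of uniformly bounded families. The norm function is then piecewise continuous and bounded. The main obstacle is making the freeness computation rigorous: one has to check that the conditional expectation of an alternating product onto the first free factor really lands in the algebraic span with continuous coefficients. I would do this by induction on the number of $\widetilde u$-blocks, centering each block $\widetilde u(s)^{\pm}-\widetilde\varphi(\widetilde u(s)^{\pm})$ and using that $E_\infty^{\widetilde\varphi}$ kills alternating centered products containing at least one centered $\widetilde u$-block.
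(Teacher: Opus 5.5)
Your route differs from the paper's in how continuity is obtained. The paper never needs an explicit formula for that part. By Lemma~\ref{L3.1}, on each piece $\Pi^t(\mathfrak{D}_{t,i}a)$ is a fixed noncommutative polynomial in $x_j$ and $u_k^t(t')^{\pm}=\big(\widetilde u_k((t'-t)\vee0)\,u_k(t\wedge t')\big)^{\pm}$. These letters are uniformly bounded and strongly continuous in $t$ on $L^2(\widetilde\varphi)$. For $\eta\in L^\infty(\varphi)$ one has $\Vert (E_\infty^{\widetilde\varphi}(X)-E_\infty^{\widetilde\varphi}(Y))\eta\Vert_{L^2(\varphi)}\le \Vert (X-Y)\eta\Vert_{L^2(\widetilde\varphi)}$, so piecewise strong continuity passes through the conditional expectation. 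Freeness is then used only for the membership in $\mathbb{C}\langle x,u\rangle_t$, via a Mingo--Speicher type argument; your centering induction is meant to reprove exactly that. You instead compute $E_\infty^{\widetilde\varphi}$ first and read continuity off the formula, which makes the continuity of the coefficients $c_k(t)$ the crux. You already use the contractivity of $E_\infty^{\widetilde\varphi}$ for boundedness, so the paper's shortcut was at hand.

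The step that is wrong as written is your description of those coefficients and of the centering induction. The coefficients are not polynomials in moments of the $\widetilde u_k(s)$ alone, and $E_\infty^{\widetilde\varphi}$ does not kill products in which only the $\widetilde u$-blocks are centered. Set $\mathring{\widetilde u}_i(s):=\widetilde u_i(s)-e^{-s/2}1$ and take $m\in\mathbb{C}\langle x,u\rangle_t$. Then $E_\infty^{\widetilde\varphi}(\mathring{\widetilde u}_i(s)^*\,m\,\mathring{\widetilde u}_i(s))=(1-e^{-s})\varphi(m)1$, and hence $E_\infty^{\widetilde\varphi}(\widetilde u_i(s)^*m\,\widetilde u_i(s))=e^{-s}m+(1-e^{-s})\varphi(m)1$. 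Your rule would return only $e^{-s}m$; compare the analogous computation in the proof of Lemma~\ref{L6.3}.

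To run the induction you must also center the $\mathbb{C}\langle x,u\rangle_t$-blocks. Only products alternating between centered blocks of both kinds, with at least one $\widetilde u$-block, are annihilated. Centering the $\mathbb{C}\langle x,u\rangle_t$-blocks produces coefficient factors $\varphi(m(t))$, where $m(t)$ is a word in $x_j$, in $u_k(t')$ with fixed $t'<t$, and in $u_k(t)$. So the result is a trace polynomial, as in the explicit example in Section~\ref{applications_to_unitary_BM}.

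Membership in $\mathbb{C}\langle x,u\rangle_t$ is unaffected, since these factors are scalars. Moreover, $t\mapsto\varphi(m(t))$ is continuous on each piece because $\varphi\in TS^c(C^*_R\langle x,u\rangle)$. So the argument closes with an ingredient you already invoke, but as stated the continuity of the $c_k$ is not justified. A minor point: your norm bound should be $\max(1,R)^{|w|}$, not $R^{|w|}$.
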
 
\begin{proof} 
By Lemma \ref{L3.1}, $\Pi^t(\mathfrak{D}_{t,i}a)$ is piecewise (in $t$) a polynomial in $x_j$ and $u^t_i(t'), u^t_i(t')^*$. Consequently, the first assertion follows from that the fact that $t \mapsto u_i^t(t') = \widetilde{u}_i((t'-t)\vee0)u_i(t\wedge t')$ is strongly continuous on $L^2(\widetilde{\varphi})$ for a fixed $t'$ and that $E_\infty^{\widetilde{\varphi}}$ is strongly continuous on bounded balls in $L^\infty(\varphi)$. 

The second assertion can be verified by applying item (e.ii) from the definition of $\widetilde{\varphi}$, following the idea of \cite[Theorem 19 and Exercises 19,20 in section 2.5]{MingoSpeicher:Book} (cf.\ \cite[Lemmas 4.3, 4.4]{Ueda:JOTP19}). 
\end{proof} 

Let $\xi : [0,+\infty) \to L^2(\varphi)^n$ be a function given by $\xi(t) = (\xi_1(t),\dots,\xi_n(t))$ such that each component $\xi_i(t)$ belongs to $L^2(\varphi)_t$ and is self-adjoint. A primary example is the map $t \mapsto (E_\infty^{\widetilde{\varphi}}(\Pi^t(\mathfrak{D}_{t,i}a)))_{i=1}^n$ for $a=a^* \in \mathbb{C}\langle x,u\rangle$ as seen in Corollary \ref{C3.3}. These examples form a dense linear submanifold of the $L^2$-space of such functions; see Lemma \ref{L3.10} below. 

Since the restriction of $\varphi^t$ to $C^*_R\langle x,u\rangle_t$ coincides with the original $\varphi$, we naturally have $L^2(\varphi^t)_t = L^2(\varphi)_t$. Based on this remark, we introduce a self-adjoint derivation $D_{\xi(t)}: \mathbb{C}\langle x,u\rangle \to L^2(\varphi^t)$ in such a way that 
\begin{align*}
D_{\xi(t)}u_i(t') 
&= 
\mathbf{1}_{[0,t']}(t)\,u_i(t')(\sqrt{-1}u_i(t)^*\xi_i(t) u_i(t)), \\
D_{\xi(t)} x_j &= 0. 
\end{align*}
Here, a derivation $D$ is said to be self-adjoint if it commutes with the adjoint operation, i.e., $D(a^*) = D(a)^*$ for all $a$. 

\begin{lemma}\label{L3.4} 
For every $a \in \mathbb{C}\langle x,u\rangle$, we have the following identity for the derivation $D_{\xi(t)}$: 
\[
D_{\xi(t)}a = \sum_{i=1}^n (\delta_{t,i}\,a)\,\sharp\,\xi_i(t),
\]
which implies:  
\[
\sum_{i=1}^n \varphi^t(E_t^{\varphi^t}(\mathfrak{D}_{t,i}a)\xi_i(t)) 
= 
\sum_{i=1}^n (E_t^{\varphi^t}(\mathfrak{D}_{t,i}a)|\xi_i(t))_{L^2(\varphi^t)} 
= 
\varphi^t(D_{\xi(t)}a).  
\]
\end{lemma}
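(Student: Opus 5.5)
The plan has two parts: first the derivation identity, then the trace identity.

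For the first identity, both sides are derivations from $\mathbb{C}\langle x,u\rangle$ into the bimodule $L^2(\varphi^t)$ over $C^*_R\langle x,u\rangle$. The map $a \mapsto (\delta_{t,i}a)\,\sharp\,\xi_i(t)$ is a derivation because $\delta_{t,i}$ is one, and $(a\otimes b)\,\sharp\,\eta = a\eta b$ turns $\delta_{t,i}(ab)=\delta_{t,i}(a)(1\otimes b)+(a\otimes 1)\delta_{t,i}(b)$ into the Leibniz rule. It therefore suffices to check the identity on generators. On $x_j$ both sides vanish. On $u_{i'}(t')$, the definition of $\delta_{t,i}$ gives
\[
\sum_i (\delta_{t,i}u_{i'}(t'))\,\sharp\,\xi_i(t) = \mathbf{1}_{[0,t']}(t)\,u_{i'}(t')\sqrt{-1}\,u_{i'}(t)^*\xi_{i'}(t)u_{i'}(t),
\]
which is $D_{\xi(t)}u_{i'}(t')$. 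On $u_{i'}(t')^*$ we get $-\mathbf{1}_{[0,t']}(t)\sqrt{-1}\,u_{i'}(t)^*\xi_{i'}(t)u_{i'}(t)u_{i'}(t')^*$. Since $\xi_{i'}(t)$ is self-adjoint, this equals $(D_{\xi(t)}u_{i'}(t'))^*$, which is $D_{\xi(t)}(u_{i'}(t')^*)$ by self-adjointness of $D_{\xi(t)}$. Indeed, self-adjointness is what defines $D_{\xi(t)}$ on $u^*$; equivalently, applying the derivation to $u^*u=1$ forces this value. Hence the two derivations agree on all of $\mathbb{C}\langle x,u\rangle$.

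For the second identity, the first equality is the definition of the inner product together with $\xi_i(t)^*=\xi_i(t)$:
\[
(\eta|\xi)=\varphi^t(\xi^*\eta)=\varphi^t(\eta\xi)
\]
by traciality. Here $E_t^{\varphi^t}(\mathfrak{D}_{t,i}a)$ lies in $\mathbb{C}\langle x,u\rangle_t$ by Corollary \ref{C3.3}, so the product makes sense in $L^2$. Since $\xi_i(t)\in L^2(\varphi^t)_t$, the projection can be dropped:
\[
(E_t^{\varphi^t}(\mathfrak{D}_{t,i}a)|\xi_i(t)) = (\mathfrak{D}_{t,i}a|\xi_i(t))_{L^2(\varphi^t)} = \varphi^t((\mathfrak{D}_{t,i}a)\,\xi_i(t)).
\]
It remains to show $\varphi^t(\theta(\delta_{t,i}a)\,\xi) = \varphi^t((\delta_{t,i}a)\,\sharp\,\xi)$. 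For a simple tensor $b\otimes c$ this reads $\varphi^t(cb\xi) = \varphi^t(b\xi c)$, which is traciality. By linearity it holds for all of $\delta_{t,i}a$, and summing over $i$ and using the first identity gives $\varphi^t(D_{\xi(t)}a)$.

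The only delicate point is justifying traciality with $\xi\in L^2$ rather than bounded: the identity $\varphi^t(cb\xi)=\varphi^t(b\xi c)$ for $b,c\in C^*_R\langle x,u\rangle$ and $\xi\in L^2(\varphi^t)$. I would prove it by approximating $\xi$ in $L^2$ by elements of $C^*_R\langle x,u\rangle$. Both sides are $L^2$-continuous in $\xi$, since they are inner products against $(cb)^*$ and via the bimodule structure, and they agree on the dense subspace. This continuity argument is the main (minor) obstacle; everything else is bookkeeping.
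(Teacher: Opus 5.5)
Your proposal is correct and follows essentially the same route as the paper: you reduce the first identity to generators (checking $u_{i'}(t')^*$ directly where the paper instead notes that both derivations are self-adjoint), and then you drop $E_t^{\varphi^t}$ using $\xi_i(t)\in L^2(\varphi^t)_t$ and move the tensor legs around by traciality. The only cosmetic difference is that the paper carries out this last step with adjoint relations inside the $L^2$ inner product, which avoids the density argument you add for $\xi_i(t)\in L^2$.
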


This lemma establishes that the family $D_\xi = (D_{\xi(t)})_{t\geq0}$ plays the r\^{o}le of the ``gradient in the direction of $\xi$". 

\begin{proof}
It is straightforward to see that the map $a \mapsto \sum_{i=1}^n (\delta_{t,i}\,a)\,\sharp\,\xi_i(t)$ defines a derivation that is self-adjoint, provided $\xi_i(t) = \xi_i(t)^*$. Hence, it suffices to verify the first identity for the generators $x_j$ and $u_{i'}(t')$. Indeed, we have $\delta_{t,i}x_j = 0$ and 
\[
\sum_{i=1}^n \delta_{t,i} u_{i'}(t')\,\sharp\,\xi_i(t) 
= 
\mathbf{1}_{[0,t']}(t)\,u_{i'}(t')(\sqrt{-1} u_{i'}(t)^* \xi_{i'}(t) u_{i'}(t)), 
\]
which confirms the first identity. 

The first equality of the second identity follows directly from the self-adjointness $\xi_i(t) = \xi_i(t)^*$. Thus, it remains only to prove the second equality. Using the Sweedler notation $\delta_{t,i}a = a(i)_{(1)}\otimes a(i)_{(2)}$ and noting that $\xi_i(t) \in L^2(\varphi^t)_t$, we have
\begin{align*}
\sum_{i=1}^n (E_t^{\varphi^t}(\mathfrak{D}_{t,i}a)|\xi_i(t))_{L^2(\varphi^t)} 
&=
\sum_{i=1}^n (a(i)_{(2)}a(i)_{(1)}|\xi_i(t))_{L^2(\varphi^t)} \\
&= 
\sum_{i=1}^n (1|a(i)_{(2)}^*\xi_i(t)a(i)_{(1)}^*)_{L^2(\varphi^t)} \\
&= 
\sum_{i=1}^n (a(i)_{(1)}\xi_i(t)a(i)_{(2)}|1)_{L^2(\varphi^t)} \\
&= 
\sum_{i=1}^n ((\delta_{t,i}a)\,\sharp\,\xi_i(t)|1)_{L^2(\varphi^t)} 
= 
\varphi^t(D_{\xi(t)}a).  
\end{align*}
Hence we are done.
\end{proof} 

We introduce new coordinates $y(t) = (y_1(t),\dots,y_n(t))$ and $\widetilde{y}(t) = (\widetilde{y}_1(t),\dots,\widetilde{y}_n(t))$ defined by: 
\[
y_i(t) := e^{t/2}u_i(t) \in \mathbb{C}\langle x,u\rangle \qquad \text{and} \qquad \widetilde{y}_i(t) := e^{t/2}\widetilde{u}_i(t) \in \mathbb{C}\langle x,u,\widetilde{u}\rangle.
\]
The Introduction of these coordinates is inspired by the discussions in \cite {Biane:Fields97}. Clearly, the variables $x_j$ and $y_i(t)$, $y_i(t)^{-1}$ altogether generate $\mathbb{C}\langle x,u\rangle$ algebraically (though not as $*$-algebra); that is, any element of $\mathbb{C}\langle x,u\rangle$ is a polynomial in $x_j$ and $y_i(t)$, $y_i(t)^{-1} = e^{-t/2}u_i(t)^*$. Similarly, $x$ and $y(t)$, $\widetilde{y}(t)$, $y(t)^{-1}$, $\widetilde{y}(t)^{-1}$ algebraically generate $\mathbb{C}\langle x,u,\widetilde{u} \rangle$. 

It is necessary to examine the behavior of the previously defined maps under these new coordinates. The proof of the next lemma
is straightforward and is thus left to the reader.  

\begin{lemma}\label{L3.5} 
The following identities hold: 
\begin{itemize}
\item[(1)] $D_{\xi(t)}y_i(t') = \mathbf{1}_{[0,t']}(t)\,y_i(t')(\sqrt{-1}\,y_i(t)^{-1}\xi_i(t)y_i(t))$ for every $i=1,\dots,n$ and each $t' \geq 0$.  
\item[(2)] $\Pi^t(y_i(t')) = \widetilde{y}_i((t'-t)\vee 0)y_i(t\wedge t')\, (=: y_i^t(t'))$ for every $i=1,\dots,n$ and each $t' \geq 0$. 
\end{itemize}
\end{lemma}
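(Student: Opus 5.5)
Both identities follow directly from the definitions: $y_i(t') = e^{t'/2}u_i(t')$ is a scalar multiple of a generator, and $D_{\xi(t)}$ and $\Pi^t$ are linear. So I will compute each side on the generator $u_i(t')$ and move the scalar factors through.

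For (1), linearity of the derivation $D_{\xi(t)}$ and its defining formula on $u_i(t')$ give
\[
D_{\xi(t)}y_i(t') = e^{t'/2}D_{\xi(t)}u_i(t') = \mathbf{1}_{[0,t']}(t)\,e^{t'/2}u_i(t')\bigl(\sqrt{-1}\,u_i(t)^*\xi_i(t)u_i(t)\bigr).
\]
Next I rewrite the right-hand side in the new coordinates:
\[
e^{t'/2}u_i(t') = y_i(t'), \qquad u_i(t)^* = e^{t/2}y_i(t)^{-1}, \qquad u_i(t) = e^{-t/2}y_i(t).
\]
The factors $e^{t/2}$ and $e^{-t/2}$ flanking $\xi_i(t)$ cancel. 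This gives $\mathbf{1}_{[0,t']}(t)\,y_i(t')\bigl(\sqrt{-1}\,y_i(t)^{-1}\xi_i(t)y_i(t)\bigr)$, which is the claimed formula.

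For (2), $\Pi^t$ is a linear $*$-homomorphism, so
\[
\Pi^t(y_i(t')) = e^{t'/2}\,\widetilde{u}_i((t'-t)\vee 0)\,u_i(t\wedge t').
\]
I split the scalar as $e^{t'/2} = e^{((t'-t)\vee 0)/2}\,e^{(t\wedge t')/2}$, which holds because $((t'-t)\vee 0) + (t\wedge t') = t'$. I check this in two cases.
\begin{itemize}
\item If $t' \ge t$, the sum is $(t'-t) + t = t'$.
\item If $t' < t$, the sum is $0 + t' = t'$.
\end{itemize}
Absorbing the two exponential factors into the respective unitaries gives $\widetilde{y}_i((t'-t)\vee 0)\,y_i(t\wedge t')$, using the definitions of $y_i$ and $\widetilde{y}_i$. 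This is exactly $y_i^t(t')$.

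The only step needing any care is the exponent bookkeeping in (2), namely the identity $((t'-t)\vee 0) + (t\wedge t') = t'$ checked above.
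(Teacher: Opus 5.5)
Your proposal is correct and is exactly the direct verification the paper leaves to the reader. It applies linearity of $D_{\xi(t)}$ and $\Pi^t$ to $y_i(t') = e^{t'/2}u_i(t')$, rewrites in the $y$-coordinates, and uses the exponent identity $((t'-t)\vee 0)+(t\wedge t') = t'$ in part (2).
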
 

The next lemma asserts that $\Pi^t$ extends to $L^2(\varphi^t)$ and interacts consistently with the derivation $D_{\xi(t)}$. 

\begin{lemma}\label{L3.6} 
The $*$-homomorphism $\Pi^t$ extends to $L^2(\varphi^t)$ as an isometric linear map into $L^2(\widetilde{\varphi})$. This extension, still denoted by the same symbol, satisfies:
\[
\Pi^t(c\,\sharp\,\eta) = (\Pi^t\otimes\Pi^t)(c)\,\sharp\,\eta
\]
for any $c \in C^*_R\langle x,u\rangle\otimes C_R^*\langle x,u\rangle$ and $\eta \in L^2(\varphi)_t = L^2(\varphi^t)_t$.
In particular, the identity $\varphi^t(D_{\xi(t)}a) = \widetilde{\varphi}(\Pi^t(D_{\xi(t)}a))$ holds for every $a \in \mathbb{C}\langle x,u\rangle$. 
\end{lemma}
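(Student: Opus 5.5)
The extension follows from the definition $\varphi^t = \widetilde{\varphi}\circ\Pi^t$. For $a \in C^*_R\langle x,u\rangle$ we have
\[
\Vert \Pi^t(a)\Vert_{L^2(\widetilde{\varphi})}^2 = \widetilde{\varphi}(\Pi^t(a)^*\Pi^t(a)) = \widetilde{\varphi}(\Pi^t(a^*a)) = \varphi^t(a^*a) = \Vert a\Vert_{L^2(\varphi^t)}^2 ,
\]
since $\Pi^t$ is a $*$-homomorphism. So $a \mapsto \Pi^t(a)$ is well defined on the image of $C^*_R\langle x,u\rangle$ in $L^2(\varphi^t)$: vectors of zero $L^2(\varphi^t)$-norm go to zero. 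It is isometric on this dense subspace and hence extends uniquely to an isometry $L^2(\varphi^t) \to L^2(\widetilde{\varphi})$. I also note that $\Pi^t$ is the identity on $C^*_R\langle x,u\rangle_t$, because $u_i^t(t') = \widetilde{u}_i(0)u_i(t') = u_i(t')$ for $t'\le t$ (using $\widetilde{u}_i(0)=1$ for the free unitary Brownian motion). By continuity, $\Pi^t$ is therefore the identity on $L^2(\varphi^t)_t = L^2(\varphi)_t$, viewed inside $L^2(\widetilde{\varphi})$ through the inclusions of the diagram.

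For the bimodule identity, I take $c = a\otimes b$ by linearity, so the claim is $\Pi^t(a\eta b) = \Pi^t(a)\,\eta\,\Pi^t(b)$ for $\eta \in L^2(\varphi)_t$. I first check it for $\eta \in C^*_R\langle x,u\rangle_t$. There it reads $\Pi^t(a\eta b) = \Pi^t(a)\Pi^t(\eta)\Pi^t(b) = \Pi^t(a)\,\eta\,\Pi^t(b)$, which holds by multiplicativity of $\Pi^t$ and $\Pi^t(\eta)=\eta$. For general $\eta$ I approximate by $\eta_k \in C^*_R\langle x,u\rangle_t$ in $L^2(\varphi)_t$. The left side converges because $\eta\mapsto a\eta b$ is bounded on $L^2(\varphi^t)$ (with norm at most $\Vert a\Vert\Vert b\Vert$) and the extended $\Pi^t$ is isometric. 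The right side converges because left and right multiplication by elements of $C^*_R\langle x,u,\widetilde{u}\rangle$ are bounded on $L^2(\widetilde{\varphi})$, and $\eta_k\to\eta$ in $L^2(\widetilde{\varphi})$ since the inclusion $L^2(\varphi)_t\hookrightarrow L^2(\widetilde{\varphi})$ is isometric.

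The main point to be careful about is bookkeeping: $\eta$ is read in two different spaces, $L^2(\varphi^t)$ on the left and $L^2(\widetilde{\varphi})$ on the right. The two readings are consistent because $\varphi^t$ and $\widetilde{\varphi}$ both restrict to $\varphi$ on $C^*_R\langle x,u\rangle_t$. This is exactly the content of the identification $L^2(\varphi^t)_t = L^2(\varphi)_t$.

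Finally, by Lemma \ref{L3.4} we have $D_{\xi(t)}a = \sum_i (\delta_{t,i}a)\,\sharp\,\xi_i(t)$ with $\xi_i(t)\in L^2(\varphi)_t$, so $D_{\xi(t)}a \in L^2(\varphi^t)$. The extended $\Pi^t$ is isometric and maps $1$ to $1$, hence preserves inner products, and so
\[
\widetilde{\varphi}(\Pi^t(D_{\xi(t)}a)) = (\Pi^t(D_{\xi(t)}a)\,|\,\Pi^t(1))_{L^2(\widetilde{\varphi})} = (D_{\xi(t)}a\,|\,1)_{L^2(\varphi^t)} = \varphi^t(D_{\xi(t)}a).
\]
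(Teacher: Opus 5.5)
Your proposal is correct and follows essentially the same route as the paper. The isometric extension comes from $\varphi^t=\widetilde{\varphi}\circ\Pi^t$, and the bimodule identity is obtained by approximating $\eta$ with elements of $C^*_R\langle x,u\rangle_t$, on which $\Pi^t$ acts trivially in $L^2(\widetilde{\varphi})$, and passing to the limit on both sides; the final identity then holds because the isometry fixes $1$, and you spell out the well-definedness and the two readings of $\eta$ (in $L^2(\varphi^t)$ and in $L^2(\widetilde{\varphi})$) more carefully than the paper's brief argument does.
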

\begin{proof}
The existence of the desired extension is clear from the definition of $\varphi^t$. For any $\eta \in L^2(\varphi^t)$, we can choose a sequence $b_k \in C^*_R\langle x,u\rangle_t$ such that $\Vert b_k - \eta\Vert_{L^2(\varphi^t)_t} \to 0$ as $k\to\infty$. Since $L^2(\varphi^t)_t = L^2(\varphi)_t \subseteq L^2(\varphi) \subset L^2(\widetilde{\varphi})$, we have $\Vert b_k - \eta\Vert_{L^2(\widetilde{\varphi})} \to 0$ as $k\to\infty$. Given that $\Pi^t(b_k) = b_k$ for all $k$, it follows that    
\begin{align*}
\Pi^t(c\,\sharp\,\eta) = \lim_{k\to\infty} \Pi^t(c\,\sharp\,b_k) = \lim_{k\to\infty} (\Pi^t\otimes\Pi^t)(c)\,\sharp\,b_k = (\Pi^t\otimes\Pi^t)(c)\,\sharp\,\eta. 
\end{align*}
The final assertion follows immediately from Lemma \ref{L3.4}.
\end{proof}

\begin{remark}\label{R3.7}
Lemma \ref{L3.6} provides a clear procedure for calculating $\varphi^t(D_{\xi(t)}a)$ for any $a\in \mathbb{C}\langle x,u\rangle$. First, express $a$ as a polynomial in the variables $x_j$, $y_i(t')$, $y_i(t')^{-1}$. Next, compute $D_{\xi(t)}a$ using the identities in Lemma \ref{L3.5}(1), combined with the Leibniz rule and the self-adjointness of the derivation. Then, substitute $y_i(t')$ with $y_i^t(t')$ as per Lemma \ref{L3.6}(2), and finally evaluate the expression under the tracial state $\widetilde{\varphi}$.
\end{remark}  

The following is a key explicit calculation utilized in our framework. 

\begin{lemma}\label{L3.8} For every $s > r \geq 0$, every $a \in \mathbb{C}\langle x,u\rangle_r$ and all $i,i'=1,\dots,n$, the following identity holds:  
\[
E_\infty^{\widetilde{\varphi}}(\Pi^t(\mathfrak{D}_{t,i}(y_{i'}(s)-y_{i'}(r))a)) = \delta_{i,i'}\,\sqrt{-1}\,\mathbf{1}_{(r,s]}(t) y_i(t)a, \qquad t \geq 0.
\] 
\end{lemma}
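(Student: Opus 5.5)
The plan is to compute $\mathfrak{D}_{t,i}\big((y_{i'}(s)-y_{i'}(r))a\big)$ explicitly using the Leibniz rule, then apply $\Pi^t$ as in Remark \ref{R3.7}, and finally evaluate $E_\infty^{\widetilde{\varphi}}$ using the freeness in (e.ii) and (e.iii).

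\textbf{Step 1: the derivative.} From the definition of $\delta_{t,i}$ we get
\[
\delta_{t,i}y_{i'}(s)=\delta_{i,i'}\mathbf{1}_{[0,s]}(t)\,\sqrt{-1}\,y_i(s)u_i(t)^*\otimes u_i(t).
\]
Write $\delta_{t,i}a=a_{(1)}\otimes a_{(2)}$ in Sweedler notation. The Leibniz rule $\delta(bc)=\delta(b)(1\otimes c)+(b\otimes 1)\delta(c)$ and $\theta(p\otimes q)=qp$ give
\[
\mathfrak{D}_{t,i}\big((y_{i'}(s)-y_{i'}(r))a\big)=\sqrt{-1}\,\delta_{i,i'}\big(\mathbf{1}_{[0,s]}(t)\,u_i(t)a\,y_i(s)u_i(t)^*-\mathbf{1}_{[0,r]}(t)\,u_i(t)a\,y_i(r)u_i(t)^*\big)+a_{(2)}(y_{i'}(s)-y_{i'}(r))a_{(1)}.
\]
Since $a\in\mathbb{C}\langle x,u\rangle_r$, Lemma \ref{L3.1} shows that $\delta_{t,i}a=0$ for $t>r$. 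Moreover, $a_{(1)},a_{(2)}$ only involve letters at times $\le r$.

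\textbf{Step 2: apply $\Pi^t$ and split into cases.} By Lemma \ref{L3.5}(2), for $t\le r<s$ we have
\[
\Pi^t(y_{i'}(s)-y_{i'}(r))=\big(\widetilde{y}_{i'}(s-t)\widetilde{y}_{i'}(r-t)^{-1}-1\big)\widetilde{y}_{i'}(r-t)y_{i'}(t).
\]
In this expression, $w:=e^{(s-r)/2}\,\widetilde{u}_{i'}(s-t)\widetilde{u}_{i'}(r-t)^*$ is the rescaled left increment. Let $B$ be the algebra generated by $C^*_R\langle x,u\rangle$ and all $\widetilde{u}_{i''}(\tau)$ with $\tau\le r-t$. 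By (e.ii), (e.iii), freeness of left increments of free unitary Brownian motion, and associativity of freeness, $w$ is free from $B$. Also $\widetilde{\varphi}(w)=e^{(s-r)/2}e^{-(s-r)/2}=1$. All of $\Pi^t(a)$, $\Pi^t(a_{(1)})$, $\Pi^t(a_{(2)})$ and $u_i(t)^{\pm1}$ lie in $B$. Hence the standard freeness identity $E_B(X(w-1)Y)=X\,\widetilde{\varphi}(w-1)\,Y=0$ for $X,Y\in B$ applies, and after composing with $E_\infty^{\widetilde{\varphi}}$ every term vanishes for $t\le r$.

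For $r<t\le s$, only the $y(s)$-term of the first bracket survives. It becomes $u_i(t)a\,\widetilde{y}_i(s-t)y_i(t)u_i(t)^*$. Since $\widetilde{y}_i(s-t)$ is free from $L^\infty(\varphi)$ and has trace $e^{(s-t)/2}e^{-(s-t)/2}=1$, we get
\[
E_\infty^{\widetilde{\varphi}}\big(u_i(t)a\,\widetilde{y}_i(s-t)y_i(t)u_i(t)^*\big)=u_i(t)a\,y_i(t)u_i(t)^*=e^{t/2}u_i(t)a=y_i(t)a.
\]
For $t>s$, all terms vanish by the indicators. Collecting the three cases yields the factor $\delta_{i,i'}\sqrt{-1}\,\mathbf{1}_{(r,s]}(t)$.

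\textbf{Main obstacle.} The main obstacle is justifying rigorously, in the $L^2$/$W^*$ setting, the freeness factorization $E(X(w-1)Y)=0$. This requires identifying the right subalgebra $B$ and checking that the increment $w$ is free from it. That check involves combining the freeness of $\widetilde{u}$ from $C^*_R\langle x,u\rangle$, the freeness among the components $\widetilde{u}_1,\dots,\widetilde{u}_n$, and the freeness of left increments within each component. I would handle this first via the moment-cumulant characterization, in the spirit of Corollary \ref{C3.3}.
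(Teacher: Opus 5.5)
Your proposal is correct and takes essentially the paper's route. The paper leaves Lemma \ref{L3.8} as a direct calculation via Lemma \ref{L3.1}, which is the Leibniz rule you use. Its written-out analogue, Lemma \ref{L6.3}, likewise shows that the $t\le r$ contributions do not depend on $s$ by freeness of left increments, and evaluates the $(r,s]$ piece by freeness from the original variables. Your factorization $\Pi^t(y_{i'}(s)-y_{i'}(r))=(w-1)\widetilde{y}_{i'}(r-t)y_{i'}(t)$ with $\widetilde{\varphi}(w)=1$ is a compact packaging of that cancellation. Note that for $t\le r$ the two $\delta_{i,i'}$-terms must first be combined into $u_i(t)a(y_i(s)-y_i(r))u_i(t)^*$, since each alone has nonzero conditional expectation.
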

\begin{proof} 
This result follows from a direct, albeit tedious, calculation using Lemma \ref{L3.1}. We leave the details to the interested reader. 
\end{proof} 

We define several Hilbert spaces as follows. Let $\mathcal{H}(\varphi)_0$ be all the bounded, compactly supported and piecewise continuous functions $\xi : [0,+\infty) \to L^2(\varphi)^n$ such that $\xi(t) \in (L^2(\varphi)_t)^n$ for every $t \geq 0$. We denote by $\mathcal{H}(\varphi)$ the closure of $\mathcal{H}(\varphi)_0$ in the Hilbert space $L^2([0,+\infty);L^2(\varphi)^n)$. Furthermore, for each $T>0$, we consider the closed subspace $\mathcal{H}(\varphi)_T := \mathbf{1}_{[0,T)}\cdot\mathcal{H}(\varphi)$, consisting of functions supported on the interval $[0,T)$. 

The adjoint operation on $L^2(\varphi)$ naturally induces an adjoint on $L^2([0,+\infty);L^2(\varphi)^n)$ defined by $\xi_i^*(t) := \xi_i(t)^*$ for $\xi(t) = (\xi_1(t),\dots,\xi_n(t)) \in L^2([0,+\infty); L^2(\varphi)^n)$. Clearly, this operation is isometric with respect to the $L^2$-norm. 

The next lemma is standard, and thus its proof is omitted. 

\begin{lemma}\label{L3.9}
The following properties hold: 
\begin{itemize}
\item[(1)] Any element of $\mathcal{H}(\varphi)$ admits a representative function $\xi : [0,+\infty) \to L^2(\varphi)^n$ such that $\xi(t) \in (L^2(\varphi)_t)^n$ for all $t \geq 0$. Consequently, $\mathcal{H}(\varphi)$ is closed under the adjoint operation. 
\item[(2)] The collection of functions of the form $(\mathbf{1}_{(s_i,t_i]}\,a_i)_{i=1}^n$, where $t_i > s_i \geq 0$ and $a_i \in \mathbb{C}\langle x,u\rangle_{s_i}$, is total in $\mathcal{H}(\varphi)$. 
\end{itemize}  
\end{lemma}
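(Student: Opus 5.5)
Lemma \ref{L3.9} is a standard approximation statement, so I will prove (1) and (2) separately, using only the density of $C^*_R\langle x,u\rangle_s$ in $L^2(\varphi)_s$ and the continuity of $\varphi$.

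For (1), I start from a sequence $\xi^{(k)}\in\mathcal{H}(\varphi)_0$ converging to a given element $\xi$ of $\mathcal{H}(\varphi)$ in $L^2([0,+\infty);L^2(\varphi)^n)$. Passing to a subsequence that converges quickly gives pointwise convergence $\xi^{(k)}(t)\to\xi(t)$ in $L^2(\varphi)^n$ for almost every $t$. Since each $L^2(\varphi)_t$ is closed, the limit $\xi(t)$ lies in $(L^2(\varphi)_t)^n$ for those $t$. On the remaining null set I redefine $\xi(t):=0$, which gives the desired representative. For the adjoint, I use that the adjoint on $L^2(\varphi)$ is an isometry preserving each $L^2(\varphi)_t$, because $C^*_R\langle x,u\rangle_t$ is a $*$-algebra. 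Hence $\mathcal{H}(\varphi)_0$ is closed under $^*$: pointwise adjoints of bounded, compactly supported, piecewise continuous functions are of the same type. Since $^*$ is isometric on the ambient $L^2$-space, closedness passes to the closure $\mathcal{H}(\varphi)$.

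For (2), I must show that any $\xi\in\mathcal{H}(\varphi)$ orthogonal to all $(\mathbf{1}_{(s_i,t_i]}a_i)_{i=1}^n$ vanishes. Equivalently, it is enough to approximate each element of $\mathcal{H}(\varphi)_0$ by finite sums of such step functions. Fix $\eta\in\mathcal{H}(\varphi)_0$ supported in $[0,T]$, bounded by $M$ and continuous off a finite set. I take a partition $0=\tau_0<\dots<\tau_m=T$ containing the discontinuities and consider the adapted left-point approximation
\[
\eta_m(t)=\sum_k \mathbf{1}_{(\tau_k,\tau_{k+1}]}(t)\,\eta(\tau_k^+),
\]
where $\eta(\tau_k^+)$ is the right limit at $\tau_k$. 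Adaptedness of this coefficient is the main point. For $t>\tau_k$ we have $\eta(t)\in L^2(\varphi)_t$, and letting $t\downarrow\tau_k$ only gives membership in $\bigcap_{t>\tau_k}L^2(\varphi)_t$. Whether that intersection equals $L^2(\varphi)_{\tau_k}$ is a right-continuity question for the filtration, which I want to avoid. So I will instead use the shifted coefficient $\eta(\tau_k')$ with $\tau_k'\le\tau_k$ chosen in the previous continuity interval, or simply set the coefficient to $0$ on intervals adjacent to jumps. Since $\tau_k'\le\tau_k$ and the filtration is increasing, $\eta(\tau_k')\in L^2(\varphi)_{\tau_k'}\subseteq L^2(\varphi)_{\tau_k}$, so this coefficient is adapted.

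With this choice, uniform continuity of $\eta$ on each closed continuity piece, together with the bound $M$ on the short exceptional intervals near the finitely many jumps, gives $\eta_m\to\eta$ in $L^2$. Finally, each coefficient lies in $L^2(\varphi)_{\tau_k}$, which is by definition the $L^2$-closure of $\mathbb{C}\langle x,u\rangle_{\tau_k}$, since that algebra is norm dense in $C^*_R\langle x,u\rangle_{\tau_k}$. I therefore approximate each coefficient in $L^2(\varphi)$ by some $a\in\mathbb{C}\langle x,u\rangle_{\tau_k}$. This produces finite linear combinations of the stated functions, each with $s_i=\tau_k$ and $a_i\in\mathbb{C}\langle x,u\rangle_{s_i}$, which completes the totality argument.
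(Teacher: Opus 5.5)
Your argument is correct, and it is the standard approximation argument the paper has in mind when it omits the proof of Lemma \ref{L3.9} as standard. For (1) you use a.e.-convergent subsequences together with the closedness of $L^2(\varphi)_t$ and the $*$-invariance of $\mathcal{H}(\varphi)_0$; for (2) you use adapted step approximations of elements of $\mathcal{H}(\varphi)_0$, then $L^2$-approximation of the coefficients by elements of $\mathbb{C}\langle x,u\rangle_{\tau_k}$. The detour around right limits is harmless but unnecessary: elements of $\mathcal{H}(\varphi)_0$ satisfy $\eta(\tau_k)\in(L^2(\varphi)_{\tau_k})^n$ at every point, jumps included, so the plain left-endpoint values $\eta(\tau_k)$ are already adapted coefficients, and the finitely many jumps contribute only $O(|\triangle|)$ to the squared $L^2$-error.
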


\begin{lemma}\label{L3.10} 
For every $T>0$, the collection of functions given by $t\mapsto (E_\infty^{\widetilde{\varphi}}(\Pi^t(\mathfrak{D}_{t,i}a)))_{i=1}^n$, with $a \in \mathbb{C}\langle x,u\rangle_s$ and $s < T$, constitudes a dense linear submanifold of $\mathcal{H}(\varphi)_T$. Moreover, the subcollection satisfying the constraint $a = a^*$ forms a dense linear submanifold (over the real numbers $\mathbb{R}$) of the real Hilbert space $\mathcal{H}(\varphi)^\mathrm{sa}_T$, consisting of all the self-adjoint elements of $\mathcal{H}(\varphi)_T$. 
\end{lemma}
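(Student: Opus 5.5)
Three things need to be checked: that the functions
\[
\Xi_a(t) := \bigl(E_\infty^{\widetilde{\varphi}}(\Pi^t(\mathfrak{D}_{t,i}a))\bigr)_{i=1}^n, \qquad a \in \mathbb{C}\langle x,u\rangle_s,\ s<T,
\]
lie in $\mathcal{H}(\varphi)_T$; that they form a linear submanifold; and that they are dense there.

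Linearity is immediate, since $a\mapsto \mathfrak{D}_{t,i}a$, $\Pi^t$ and $E_\infty^{\widetilde{\varphi}}$ are linear and $\bigcup_{s<T}\mathbb{C}\langle x,u\rangle_s$ is a vector space.

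For membership, Corollary \ref{C3.3} gives that $\Xi_a$ is bounded, compactly supported and piecewise continuous, and that $\Xi_a(t)\in(\mathbb{C}\langle x,u\rangle_t)^n\subset (L^2(\varphi)_t)^n$. Moreover, if $a\in\mathbb{C}\langle x,u\rangle_s$, then $a$ contains no letter $u_i(t')$ with $t'\ge t$ once $t>s$, so the last assertion of Lemma \ref{L3.1} gives $\mathfrak{D}_{t,i}a=0$ for $t>s$. Hence $\Xi_a$ is supported in $[0,s]\subset[0,T)$ and lies in $\mathcal{H}(\varphi)_T$.

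For density I use Lemma \ref{L3.9}(2): it suffices to approximate each single-component function $\mathbf{1}_{(r,s]}(t)\,b\,e_{i}$ with $b\in\mathbb{C}\langle x,u\rangle_{r}$ and $r<s\le T$. Each such function is a limit of those with $s<T$, because $\Vert \mathbf{1}_{(s',T]}b\Vert_{L^2}\to0$ as $s'\uparrow T$, and functions supported in $[0,T)$ generate $\mathcal{H}(\varphi)_T$ after multiplying by $\mathbf{1}_{[0,T)}$. Lemma \ref{L3.8} is the key tool: for $a'\in\mathbb{C}\langle x,u\rangle_r$,
\[
\Xi_{(y_i(s)-y_i(r))a'}(t)=\sqrt{-1}\,\mathbf{1}_{(r,s]}(t)\,y_i(t)a'\,e_i .
\]
The factor $y_i(t)$ depends on $t$, so I partition $(r,s]$ into small intervals $(r_k,r_{k+1}]$ and take $a'_k := -\sqrt{-1}\,y_i(r_k)^{-1}b\in\mathbb{C}\langle x,u\rangle_{r_k}$. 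Then
\[
\sum_k \Xi_{(y_i(r_{k+1})-y_i(r_k))a'_k}(t)=\sum_k\mathbf{1}_{(r_k,r_{k+1}]}(t)\,y_i(t)y_i(r_k)^{-1}b\,e_i .
\]
Since $\varphi$ is continuous, $t\mapsto u_i(t)$ is strongly continuous, and $\Vert y_i(t)y_i(r_k)^{-1}\Vert_{L^\infty}\le e^{(t-r_k)/2}$ while $b$ is bounded. Therefore $\Vert (y_i(t)y_i(r_k)^{-1}-1)b\Vert_{L^2(\varphi)}\to0$ uniformly on $[r,s]$ as the mesh tends to $0$, and the $L^2([0,\infty);L^2(\varphi)^n)$ error tends to $0$. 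This proves density in $\mathcal{H}(\varphi)_T$. The main obstacle is this uniform-continuity step, which only needs strong continuity on a compact interval.

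For the self-adjoint part, self-adjointness of $\Xi_a$ for $a=a^*$ comes from $\mathfrak{D}_{t,i}(a^*)=(\mathfrak{D}_{t,i}a)^*$. This holds because $\delta_{t,i}$ intertwines $*$ with $(x\otimes y)\mapsto y^*\otimes x^*$, as seen on generators, and $\Pi^t$ and $E_\infty^{\widetilde{\varphi}}$ are $*$-preserving; hence $\Xi_{a^*}=\Xi_a^*$. Now let $\eta\in\mathcal{H}(\varphi)^{\mathrm{sa}}_T$ and choose $a_k$ with $\Xi_{a_k}\to\eta$. Then $\Xi_{(a_k+a_k^*)/2}=(\Xi_{a_k}+\Xi_{a_k}^*)/2\to(\eta+\eta^*)/2=\eta$, using that the adjoint is isometric on $L^2([0,+\infty);L^2(\varphi)^n)$. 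Finally, the set of $\Xi_a$ with $a=a^*$ is an $\mathbb{R}$-linear space, so it is a dense real linear submanifold of $\mathcal{H}(\varphi)^{\mathrm{sa}}_T$.
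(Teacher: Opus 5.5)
Your proof is correct and takes essentially the same route as the paper. It reduces via Lemma \ref{L3.9}(2) to step functions $\mathbf{1}_{(r,s]}b\,e_i$ and realizes $\mathbf{1}_{(r_k,r_{k+1}]}(t)\,y_i(t)y_i(r_k)^{-1}b\,e_i$ through Lemma \ref{L3.8} applied to $-\sqrt{-1}\,(y_i(r_{k+1})-y_i(r_k))y_i(r_k)^{-1}b$. It then refines the partition using the $L^2$-continuity of $t\mapsto u_i(t)$, and obtains the self-adjoint case from the $*$-compatibility of $\mathfrak{D}_{t,i}$, $\Pi^t$ and $E_\infty^{\widetilde{\varphi}}$.

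You also spell out several details the paper leaves implicit, all of which are correct:
\begin{itemize}
\item membership of the functions in $\mathcal{H}(\varphi)_T$;
\item the constraint $s<T$ at the right endpoint;
\item the symmetrization $a\mapsto (a+a^*)/2$.
\end{itemize}
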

\begin{proof} 
The second part follows from the first part, because $E_\infty^{\widetilde{\varphi}}$, $\Pi^t$ and $\mathfrak{D}_{t,i}$ are all $*$-preserving. 

By Lemma \ref{L3.9}(2), it suffices to prove that any $t\mapsto (\mathbf{1}_{(s_i,t_i]}(t) a_i)_{i=1}^n$ with $t_i > s_i \geq 0$ and $a_i \in \mathbb{C}\langle x,u\rangle_{s_i}$ belongs to the closure of the collection. 

For any $t_i \geq r_{i,2} > r_{i,1} \geq s_i$, we consider 
\[
a := -\sqrt{-1}\,\sum_{i=1}^n (y_i(r_{i,2})-y_i(r_{i,1})) y_i(r_{i,1})^{-1} a_i.
\]
By Lemma \ref{L3.8}, we have 
\[
E_\infty^{\widetilde{\varphi}}(\Pi^t(\mathfrak{D}_{t,i}(a))) 
= 
\mathbf{1}_{(r_{i,1},r_{i,2}]}(t)y_i(t)y_i(r_{i,1})^{-1} a_i 
= 
\mathbf{1}_{(r_{i,1},r_{i,2}]}(t)\,e^{(t-r_{i,1})/2}u_i(t)u_i(r_{i,1})^* a_i. 
\] 
Hence we obtain that 
\begin{align*}
&\int_0^{+\infty} \sum_{i=1}^n \Vert E_\infty^{\widetilde{\varphi}}(\Pi^t(\mathfrak{D}_{t,i}a)) - \mathbf{1}_{(r_{i,1},r_{i,2}]}(t)a_i\Vert_{L^2(\varphi)}^2\,dt \\
&=
\sum_{i=1}^n \int_{r_{i,1}}^{r_{i,2}} \Vert (e^{(t-r_{i,1})/2} u_i(t)u_i(r_{i,1})^* - 1)a_i\Vert_{L^2(\varphi)}^2\,dt \\
&\leq 
\sum_{i=1}^n \Vert a_i\Vert^2\, (r_{i,2}-r_{i,1})\,\big((e^{(r_{i,2}-r_{i,1})/2}-1) + \max_{t \in [r_{i,1},r_{i,2}]}\Vert u_i(t) - u_i(r_{i,1}) \Vert_{L^2(\varphi)}^2\big).
\end{align*}
Since $t \mapsto u_i(t)$ is continuous in $L^2(\varphi)$ (and hence uniformly continuous on any finite intervals), we conclude the desired assertion by dividing each interval $(s_i,t_i]$ into small subintervals.  
\end{proof}

The same proof as \cite[Lemma 5.3]{Ueda:JOTP19} shows the following: 

\begin{lemma}\label{L3.11} 
If $I(\varphi) < +\infty$, then 
\[
I(\varphi) 
= 
\frac{1}{2}\sup_{\substack{T > 0 \\ a = a^* \in \mathbb{C}\langle x,u\rangle}} 
\frac{(\varphi^T(a) - \sigma_0^\mathrm{frBM}(a))^2}{\int_0^T \sum_{i=1}^n \Vert E_\infty^{\widetilde{\varphi}}(\Pi^t(\mathfrak{D}_{t,i}a))\Vert_{L^2(\varphi)}^2\,dt},
\]
where we adopt the convention $0/0 = 0$. In particular, for all $T > 0$ and any self-adjoint $a = a^* \in \mathbb{C}\langle x, u\rangle$, we have the inequality
\[
(\varphi^T(a) - \sigma_0^\mathrm{frBM}(a))^2 
\leq 
2 I(\varphi) 
\int_0^T \sum_{i=1}^n \Vert E_\infty^{\widetilde{\varphi}}(\Pi^t(\mathfrak{D}_{t,i}a))\Vert_{L^2(\varphi)}^2\,dt. 
\]
The right-hand side can be expressed more concisely as
\[
(\varphi^T(a) - \sigma_0^\mathrm{frBM}(a))^2 
\leq 
2I(\varphi) \big\Vert t \mapsto (\mathbf{1}_{[0,T)}(t)\, E_\infty^{\widetilde{\varphi}}(\Pi^t(\mathfrak{D}_{t,i}a)))_{i=1}^n\big\Vert_{L^2}^2,  
\]
where $\Vert\,\cdot\,\Vert_{L^2}$ denotes the $L^2$-norm for functions taking its values in $L^2(\varphi)^n$. 
\end{lemma}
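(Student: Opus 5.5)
The plan is the usual scaling argument: the functional $I(\varphi;a,T)$ is linear in $a$ in its first part and quadratic in its penalty part, so optimizing over real multiples of $a$ turns the supremum into a ratio.

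\emph{Setting up the parabola.} Fix $T>0$ and $a=a^*\in\mathbb{C}\langle x,u\rangle$, and put
\[
L(a):=\varphi^T(a)-\sigma_0^\mathrm{frBM}(a),\qquad Q(a):=\int_0^T\sum_{i=1}^n\Vert E_\infty^{\widetilde{\varphi}}(\Pi^t(\mathfrak{D}_{t,i}a))\Vert_{L^2(\varphi)}^2\,dt .
\]
By Lemma \ref{L3.2} the integrand of $Q(a)$ coincides with the one in the definition of $I(\varphi;a,T)$. By Corollary \ref{C3.3} this integrand is bounded and piecewise continuous, so $Q(a)<+\infty$. The map $a\mapsto\mathfrak{D}_{t,i}a$ is linear, and $\Pi^t$ and $E_\infty^{\widetilde{\varphi}}$ are linear. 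Since $L$ is real-valued on self-adjoint elements, it follows that for every $\lambda\in\mathbb{R}$ the element $\lambda a$ is still self-adjoint and
\[
I(\varphi;\lambda a,T)=\lambda L(a)-\tfrac{\lambda^2}{2}Q(a).
\]

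\emph{Case $Q(a)>0$.} Maximize over $\lambda$. The choice $\lambda=L(a)/Q(a)$ gives
\[
\sup_\lambda I(\varphi;\lambda a,T)=\frac{L(a)^2}{2Q(a)} .
\]

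\emph{Case $Q(a)=0$.} Here $I(\varphi;\lambda a,T)=\lambda L(a)$. If $L(a)\neq0$, this tends to $+\infty$ as $\lambda\to\pm\infty$ with the appropriate sign, contradicting $I(\varphi)<+\infty$. Hence $L(a)=0$, and the contribution is $0=0/0$, consistent with the stated convention.

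\emph{Identifying the two suprema.} Since $\{\lambda a:\lambda\in\mathbb{R},\ a=a^*\}$ is just the set of self-adjoint elements again, we get
\[
I(\varphi)=\sup_{T,a}\sup_\lambda I(\varphi;\lambda a,T)=\tfrac12\sup_{T,a}\frac{L(a)^2}{Q(a)} .
\]
This is the claimed formula. The inequality $L(a)^2\le 2I(\varphi)Q(a)$ follows at once: for $Q(a)>0$ it is the formula rearranged, and for $Q(a)=0$ it holds because $L(a)=0$. The final reformulation only rewrites $Q(a)$ as the squared $L^2$-norm of the function $t\mapsto(\mathbf{1}_{[0,T)}(t)E_\infty^{\widetilde{\varphi}}(\Pi^t(\mathfrak{D}_{t,i}a)))_{i=1}^n$; whether the interval is $[0,T]$ or $[0,T)$ is irrelevant for the integral.

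The only step needing care is the degenerate case $Q(a)=0$. There one must use finiteness of $I(\varphi)$ to force $L(a)=0$, which is exactly why the hypothesis $I(\varphi)<+\infty$ appears. Everything else reduces to the finiteness of $Q(a)$ supplied by Corollary \ref{C3.3} together with Lemma \ref{L3.2}.
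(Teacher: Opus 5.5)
Your proposal is correct and takes essentially the same route as the paper, which simply invokes the argument of \cite[Lemma 5.3]{Ueda:JOTP19}: rescale $a\mapsto\lambda a$ with $\lambda\in\mathbb{R}$, optimize the parabola $\lambda L(a)-\tfrac{\lambda^2}{2}Q(a)$, and use $I(\varphi)<+\infty$ to force $L(a)=0$ whenever $Q(a)=0$. Your use of Lemma \ref{L3.2} and Corollary \ref{C3.3} to match the integrand in the definition of $I(\varphi;a,T)$ with the denominator (and to guarantee $Q(a)<+\infty$) is exactly the bookkeeping needed.
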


Applying the above lemma together with the Riesz representation theorem for Hilbert spaces, we observe that for each $T > 0$, there exists a unique $\xi_T \in \mathcal{H}(\varphi)_T^\mathrm{sa}$ such that 
\begin{equation}\label{Eq3.1}
\begin{aligned}
\varphi^T(a) - \sigma_0^\mathrm{frBM}(a) 
&= 
\big(t \mapsto (\mathbf{1}_{[0,T)}(t)\, E_\infty^{\widetilde{\varphi}}(\Pi^t(\mathfrak{D}_{t,i}a)))_{i=1}^n\,\big|\,\xi_T\big)_{\mathcal{H}(\varphi)_T^\mathrm{sa}}^2 \\
&= 
\int_0^T \sum_{i=1}^n \varphi(E_\infty^{\widetilde{\varphi}}(\Pi^t(\mathfrak{D}_{t,i}a))\xi_{T,i}(t))\,dt \\
&= 
\int_0^T \sum_{i=1}^n \varphi^t(E_\infty^{\widetilde{\varphi}}(\Pi^t(\mathfrak{D}_{t,i}a))\xi_{T,i}(t))\,dt \\
&= 
\int_0^T \varphi^t(D_{\xi_T(t)}a)\,dt \qquad \text{(by Lemma \ref{L3.4})} 
\end{aligned}
\end{equation}
for all self-adjoint $a = a^* \in \mathbb{C}\langle x,u\rangle$. 

Now, consider an arbitrary pair $T_2 > T_1 > 0$. For any $a = a^* \in \mathbb{C}\langle x,u\rangle_T$ with $T < T_1$, we have 
\begin{align*} 
\int_0^{T_1} \sum_{i=1}^n \big(E_\infty^{\widetilde{\varphi}}(\Pi^t(\mathfrak{D}_{t,i}a))|\xi_{T_1,i}(t)-\xi_{T_2,i}(t))_{L^2(\varphi)}\,dt 
&= 
(\varphi^{T_2}(a) - \sigma_0^\mathrm{frBM}(a))-(\varphi^{T_1}(a) - \sigma_0^\mathrm{frBM}(a)) \\
&= 
(\varphi(a) - \sigma_0^\mathrm{frBM}(a))-(\varphi(a) - \sigma_0^\mathrm{frBM}(a)) 
= 0
\end{align*}
since $\mathfrak{D}_{t,i}a = 0$ if $t \geq T_1 (> T)$. By Lemma \ref{L3.10} it follows that 
\[
\int_0^{T_1} \Vert \xi_{T_2,i}(t) - \xi_{T_1,i}(t)\Vert_{L^2(\varphi)}^2\,dt = 0, \qquad i=1,\dots,n, 
\]
implying that $\xi_{T_1,i}(t) = \xi_{T_2,i}(t)$ for a.e.\ $t \in [0,T_1)$ and all $i=1,\dots,n$. 

Consequently, by involing Lemma \ref{L3.9}(1), we can identify a unique self-adjoint function $\xi : [0,+\infty) \to L^2(\varphi)^n$ such that $\xi(t) \in (L^2(\varphi)_t)^n$ for every $t \geq 0$, and for which the restriction $\mathbf{1}_{[0,T)}\,\xi$ represents the element $\xi_T \in (\mathcal{H}(\varphi)_T^\mathrm{sa})^n$ for each $T > 0$. 

This leads to the main theorem of this section, which serves as the unitary Brownian motion counterpart to \cite[Theorem 5.2(2)]{BianeCapitaineGuionnet:InventMath03}. 

\begin{theorem}\label{T3.12} 
Assume that $I(\varphi) < +\infty$. Then there exists a unique (up to almost everywhere equivalence) self-adjoint function $\xi : [0,+\infty) \to L^2(\varphi)^n$ such that $\xi(t) \in (L^2(\varphi)_t)^n$ for every $t \geq 0$, satisfying the following properties: 
\begin{itemize}
\item[(i)] $\displaystyle I(\varphi) = \frac{1}{2} \Vert\xi\Vert_{L^2}^2$, where $\Vert\,\cdot\,\Vert_{L^2}$ denotes the $L^2$-norm for functions taking its values in $L^2(\varphi)^n$. In particular, $\xi$ belongs to the Hilbert space $\mathcal{H}(\varphi)$.  
\item[(ii)] For every $T > 0$ and any $a \in \mathbb{C}\langle x,u\rangle$, the following identity holds: 
\[
\varphi^T(a) = \sigma_0^\mathrm{frBM}(a) + \int_0^T \varphi^t(D_{\xi(t)}a)\,dt.
\]  
\end{itemize}

Moreover, if there exists a square-integrable, self-adjoint function $\xi : [0,+\infty) \to L^2(\varphi)^n$ such that $\xi(t) \in (L^2(\varphi)_t)^n$ for all $t \geq 0$ and satisfying the integral expression in item (ii) above, then item (i) above necessarily holds with this $\xi$.   
\end{theorem}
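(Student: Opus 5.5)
The plan is to take the function $\xi$ already built just before the statement from the Riesz representatives $\xi_T\in\mathcal{H}(\varphi)_T^{\mathrm{sa}}$, which agree on overlaps, and check (i) and (ii) for it. Uniqueness is quick. If $\xi$ and $\xi'$ both satisfy (ii), then subtracting shows that $\int_0^T\sum_i(E_\infty^{\widetilde\varphi}(\Pi^t(\mathfrak{D}_{t,i}a))\,|\,\xi_i(t)-\xi'_i(t))\,dt=0$ for all $a=a^*$ and all $T$. For this step I use Lemma \ref{L3.4} together with the identity $\varphi^t(\cdots\xi_i(t))=\varphi(\cdots\xi_i(t))$. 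This identity holds because both factors lie in $L^2(\varphi)_t$ (Corollary \ref{C3.3}) and $\varphi^t=\varphi$ on $C^*_R\langle x,u\rangle_t$. By Lemma \ref{L3.10} (the real span of these test functions is dense in $\mathcal{H}(\varphi)^{\mathrm{sa}}_T$), $\mathbf{1}_{[0,T)}(\xi-\xi')$ is orthogonal to $\mathcal{H}(\varphi)_T^{\mathrm{sa}}$ and also lies in it, hence vanishes a.e.

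\emph{Item (ii).} For $a=a^*$ and fixed $T$, identity \eqref{Eq3.1} already gives $\varphi^T(a)-\sigma_0^{\mathrm{frBM}}(a)=\int_0^T\varphi^t(D_{\xi_T(t)}a)\,dt$. Since $\xi_T=\mathbf{1}_{[0,T)}\xi$ a.e. and $t\mapsto\varphi^t(D_{\xi(t)}a)$ depends only on $\xi(t)$, we may replace $\xi_T$ by $\xi$. (The stray square in the first line of \eqref{Eq3.1} is a typo; the pairing is linear.) For general $a$, write $a=a_1+\sqrt{-1}a_2$ with $a_k$ self-adjoint and use complex linearity of both sides: $D_{\xi(t)}$ is a linear derivation, and $\varphi^T$ and $\sigma_0^{\mathrm{frBM}}$ are linear.

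\emph{Item (i).} For any $T$ and $a=a^*$, write $h_a:=(\mathbf{1}_{[0,T)}E_\infty^{\widetilde\varphi}(\Pi^t(\mathfrak{D}_{t,i}a)))_i$. Then (ii) gives $I(\varphi;a,T)=(h_a|\xi)-\frac12\|h_a\|^2\le\frac12\|\mathbf{1}_{[0,T)}\xi\|^2\le\frac12\|\xi\|_{L^2}^2$; this holds even without assuming finiteness. Conversely, Lemma \ref{L3.11} gives $|(h_a|\xi_T)|\le\sqrt{2I(\varphi)}\,\|h_a\|$. By density (Lemma \ref{L3.10}) this yields $\|\xi_T\|\le\sqrt{2I(\varphi)}$, i.e.\ $\frac12\|\mathbf{1}_{[0,T)}\xi\|^2\le I(\varphi)$ for every $T$. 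Letting $T\to\infty$ by monotone convergence gives $\frac12\|\xi\|^2\le I(\varphi)$, and hence equality. Since $\mathbf{1}_{[0,T)}\xi\in\mathcal{H}(\varphi)$ and these converge in $L^2$, $\xi\in\mathcal{H}(\varphi)$. Alternatively, one can choose $h_a\to\mathbf{1}_{[0,T)}\xi$ in the first computation to see that $I(\varphi)\ge\frac12\|\mathbf{1}_{[0,T)}\xi\|^2$ directly.

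\emph{The ``Moreover'' part.} Let $\xi'$ be square-integrable, self-adjoint and adapted, and suppose it satisfies (ii). The computation above gives $I(\varphi;a,T)\le\frac12\|\xi'\|^2<\infty$, so $I(\varphi)<\infty$ and the first part applies. Approximating $\mathbf{1}_{[0,T)}\xi'$ by $h_a$ gives $I(\varphi)\ge\frac12\|\xi'\|^2$; for this I must check that $\mathbf{1}_{[0,T)}\xi'\in\mathcal{H}(\varphi)^{\mathrm{sa}}_T$. \textbf{This membership is the main obstacle}: an adapted square-integrable function must lie in the closure of $\mathcal{H}(\varphi)_0$. I would prove it with the standard argument: approximate by step functions of averages $\frac1h\int_{t-h}^{t}$, shifted so that the values are adapted, using $L^2(\varphi)_s\subset L^2(\varphi)_t$ for $s\le t$. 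With membership in hand, uniqueness shows $\xi'=\xi$ a.e., and (i) follows.
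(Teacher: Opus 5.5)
Your proposal is correct and follows essentially the paper's route. The paper writes $I(\varphi;a,T)=\frac12\Vert\mathbf{1}_{[0,T)}\xi\Vert_{L^2}^2-\frac12\Vert h_a-\mathbf{1}_{[0,T)}\xi\Vert_{L^2}^2$ and uses the density in Lemma \ref{L3.10} (your ``alternative'' step). It takes (ii) from the discussion culminating in \eqref{Eq3.1}, and it runs the same computation in reverse for the ``Moreover'' part.

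You are more careful than the paper in three places, which fills in points the paper uses only tacitly in its uniqueness and ``Moreover'' claims:
\begin{itemize}
\item you extend (ii) from self-adjoint $a$ to general $a$ by linearity;
\item you show that $I(\varphi)<\infty$ follows from the existence of $\xi'$;
\item you prove, via the adapted averages $\frac1h\int_{t-h}^t$, that every adapted square-integrable function lies in $\mathcal{H}(\varphi)$.
\end{itemize}
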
 
\begin{proof} 
Following the preceding considerations, we observe that for any $a=a^* \in \mathbb{C}\langle x,u\rangle$, the functional $I(\varphi; a,T)$  can be expressed as
\begin{align*}
I(\varphi;a,T) 
&=
\varphi^T(a) - \sigma_0^\mathrm{frBM}(a) - \frac{1}{2}\big\Vert t \mapsto (\mathbf{1}_{[0,T)}(t)\, E_\infty^{\widetilde{\varphi}}(\Pi^t(\mathfrak{D}_{t,i}a)))_{i=1}^n\big\Vert_{L^2}^2 \\
&= 
\big(t \mapsto (\mathbf{1}_{[0,T)}(t)\, E_\infty^{\widetilde{\varphi}}(\Pi^t(\mathfrak{D}_{t,i}a)))_{i=1}^n\,\big|\,\mathbf{1}_{[0,T)}\,\xi\big)_{L^2} - \frac{1}{2}\big\Vert t \mapsto (\mathbf{1}_{[0,T)}(t)\, E_\infty^{\widetilde{\varphi}}(\Pi^t(\mathfrak{D}_{t,i}a)))_{i=1}^n\big\Vert_{L^2}^2 \\ 
&= 
\frac{1}{2}\Vert\mathbf{1}_{[0,T)}\,\xi\Vert_{L^2}^2 - \frac{1}{2}\big\Vert \big(t \mapsto (\mathbf{1}_{[0,T)}(t)\, E_\infty^{\widetilde{\varphi}}(\Pi^t(\mathfrak{D}_{t,i}a)))_{i=1}^n\big) - \mathbf{1}_{[0,T)}\,\xi\big\Vert_{L^2}^2. 
\end{align*} 
By Lemma \ref{L3.10}, the second term can be made arbitrarily small by choosing an appropriate $a$. Consequently, we conclude that
\[
\sup_{a = a^* \in \mathbb{C}\langle x,u\rangle} I(\varphi;a,T) = \frac{1}{2}\Vert\mathbf{1}_{[0,T)}\,\xi\Vert_{L^2}^2,   
\]
from which item (i) immediately follows. Item (ii) was established in the discussion preceding the theorem.

For the second part of the theorem, the same algebraic observation holds. If item (ii) is satisfied, we can perform the calculation in \eqref{Eq3.1} in reverse to justify the identity in item (i). Hence we are done.  
\end{proof} 

\begin{remark}\label{R3.13}{\rm
Roughly speaking, item (ii) in Theorem \ref{T3.12} implies that the family $\varphi^t$ is a solution to the following initial-value problem for  a ``non-commutative partial differential equation" of weak form: 
\[
\partial_t \langle a,\varphi^t\rangle = \langle D_{\xi(t)} a, \varphi^t\rangle, \qquad a \in  \mathbb{C}\langle x,u\rangle,\ t > 0
\]
subject to the initial condition $\varphi^0 = \sigma_0^\mathrm{frBM}$. Here, $\langle\,\cdot\,,\,\cdot\,\rangle$ denotes the dual pairing between $C^*_R\langle x,u\rangle$ and $TS^c(C^*_R\langle x,u\rangle)$.
}
\end{remark} 

\section{A free martingale problem}\label{a_free_martingale_problem}

Throughout this section, we assume that a tracial state $\varphi \in TS^c(C^*_R\langle x,u\rangle)$ and a measurable function $\xi : [0,+\infty) \to L^\infty(\varphi)^n$ ($\subset L^2(\varphi)^n$), satisfying $\xi(t) \in (L^\infty(\varphi)_t)^n$ for every $t \geq 0$, fulfill the following conditions: 
\begin{itemize} 
\item[(A)] Local Boundedness: $\xi$ is bounded over any finite intervals in the $L^\infty$-norm, 
\item[(B)] Integral Representation: Item (ii) in Theorem \ref{T3.12} holds; that is, by Lemma \ref{L3.6} one has
\begin{equation}\label{Eq4.1} 
\varphi^T(a) = \sigma_0^\mathrm{frBM}(a) + \int_0^T \widetilde{\varphi}(\Pi^t(D_{\xi(t)}a))\,dt 
\end{equation} 
for every $T > 0$ and any $a \in \mathbb{C}\langle x,u\rangle$.  
\end{itemize}
In what follows, we will repeatedly use the formula \ref{Eq4.1}, typically, choosing a sufficiently large $T$.   
 
We start with the following lemma: 
 
\begin{lemma}\label{L4.1} 
For each $t \geq 0$ we define $z(t) = (z_1(t),\dots,z_n(t)) \in L^\infty(\varphi)^n$ by 
\[
z_i(t) := y_i(t) - \sqrt{-1}\int_0^t \xi_i(s)\,y_i(s)\,ds, \qquad i=1,\dots,n.
\]
Then, every $z_i(t)$ is a continuous martingale with respect to the filtration $(L^\infty(\varphi)_t)_{t \geq 0}$; that is, $E_r^\varphi(z_i(s)) = z_i(r)$ holds for any $s \geq r \geq 0$. 
\end{lemma}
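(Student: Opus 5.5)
To prove the martingale property I will test against the dense set $C^*_R\langle x,u\rangle_r$ of $L^2(\varphi)_r$. Since $z_i(s)$ lies in $L^\infty(\varphi)$, it suffices to show
\[
\varphi\big((z_i(s)-z_i(r))\,b\big)=0\qquad\text{for all } b\in\mathbb{C}\langle x,u\rangle_r,\ s\ge r.
\]
Here we use that $\mathbb{C}\langle x,u\rangle_r$ is $L^2$-dense in $L^2(\varphi)_r$ and that $b\mapsto b^*$ preserves $\mathbb{C}\langle x,u\rangle_r$. By the definition of $z_i$, this identity is equivalent to
\[
\varphi\big((y_i(s)-y_i(r))b\big)=\sqrt{-1}\int_r^s\varphi\big(\xi_i(t)y_i(t)b\big)\,dt.
\]
The integral $\int_0^t\xi_i(s)y_i(s)\,ds$ makes sense as a Bochner integral in $L^2(\varphi)$. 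Indeed, $\xi$ is measurable and locally $L^\infty$-bounded by (A), and $t\mapsto y_i(t)$ is strongly continuous, so the integrand is locally bounded in $L^\infty$. Hence $z_i(t)\in L^\infty(\varphi)_t$, and $z_i$ is continuous in $L^2$ by continuity of $\varphi$ and local boundedness. That settles adaptedness and continuity.

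To get the identity, apply (B), i.e.\ \eqref{Eq4.1}, to $a:=(y_i(s)-y_i(r))b$ with $T>s$. Since $a\in\mathbb{C}\langle x,u\rangle_s$ and $T\ge s$, we have $\varphi^T(a)=\varphi(a)$: the restriction of $\varphi^T$ to $C^*_R\langle x,u\rangle_T$ is $\varphi$. Next, $\sigma_0^{\mathrm{frBM}}(a)=0$. To see this, note that under $\sigma_0^{\mathrm{frBM}}$ the increment $y_i(s)y_i(r)^{-1}=e^{(s-r)/2}u_i(s)u_i(r)^*$ is free from $\mathbb{C}\langle x,u\rangle_r$ and has trace $e^{(s-r)/2}e^{-(s-r)/2}=1$. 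Hence $\sigma_0^{\mathrm{frBM}}(y_i(s)b)=\sigma_0^{\mathrm{frBM}}(y_i(r)b)$, i.e.\ $t\mapsto y_i(t)$ is a $\sigma_0^{\mathrm{frBM}}$-martingale. It remains to compute $\widetilde\varphi(\Pi^t(D_{\xi(t)}a))=\varphi^t(D_{\xi(t)}a)$.

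The main computation uses Lemma \ref{L3.4}, which gives $\varphi^t(D_{\xi(t)}a)=\sum_{i'}(E_\infty^{\widetilde\varphi}(\Pi^t(\mathfrak{D}_{t,i'}a))\,|\,\xi_{i'}(t))$. Then Lemma \ref{L3.8}, applied with $b$ in place of the element of $\mathbb{C}\langle x,u\rangle_r$ there, gives $E_\infty^{\widetilde\varphi}(\Pi^t(\mathfrak{D}_{t,i'}a))=\delta_{i,i'}\sqrt{-1}\,\mathbf{1}_{(r,s]}(t)\,y_i(t)b$. Therefore
\[
\varphi^t(D_{\xi(t)}a)=\sqrt{-1}\,\mathbf{1}_{(r,s]}(t)\,\varphi\big(y_i(t)b\,\xi_i(t)\big)=\sqrt{-1}\,\mathbf{1}_{(r,s]}(t)\,\varphi\big(\xi_i(t)y_i(t)b\big),
\]
using the trace property and $\xi_i(t)=\xi_i(t)^*$. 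Lemma \ref{L3.4} was stated for $L^2$-valued $\xi$, and here $\xi$ is $L^\infty$-valued, so it applies. The $\varphi^t$-pairing reduces to a $\varphi$-pairing because both entries lie in $L^2(\varphi)_t$. Integrating over $t\in[0,T]$ yields exactly the displayed identity.

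The step needing the most care is this last one: correctly invoking Lemma \ref{L3.8} together with Lemma \ref{L3.4}, and keeping track of conjugation conventions in the inner product. The proof of Lemma \ref{L3.4} gives $(\eta|\xi)=\varphi(\eta\xi)$ for self-adjoint $\xi$; a sign slip in the $\sqrt{-1}$ would break the identification with the drift term of $z_i$. If \eqref{Eq4.1} is only stated for self-adjoint $a$, I would first extend it to all $a$ by complex linearity, since both sides are linear in $a$.
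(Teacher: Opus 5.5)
Your proof is correct and follows essentially the paper's argument: test $z_i(s)-z_i(r)$ against $\mathbb{C}\langle x,u\rangle_r$, apply \eqref{Eq4.1} to $(y_i(s)-y_i(r))b$ using Lemmas \ref{L3.8}, \ref{L3.2}, \ref{L3.4} and \ref{L3.6}, and kill the $\sigma_0^{\mathrm{frBM}}$ term by the martingale property of $y_i$. You verify that property directly from freeness of the left increment, where the paper cites Biane, and you also check adaptedness and continuity, which the paper leaves implicit. Your sign bookkeeping is right: after the trace property the drift enters as $+\sqrt{-1}\int_r^s\varphi(\xi_i(t)y_i(t)b)\,dt$, so the ``$-\sqrt{-1}$'' in the last displayed line of the paper's proof is a typo.
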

\begin{proof}
Fix an arbitrary $a \in \mathbb{C}\langle x,u\rangle_r$. By combining Lemma \ref{L3.8} with Lemma \ref{L3.2} we obtain that 
\[
E_t^{\varphi^t}(\mathfrak{D}_{t,i'}((y_{i}(s)-y_{i}(r))a^*)) = \delta_{i,i'}\,\sqrt{-1}\,\mathbf{1}_{(r,s]}(t) y_i(t)a^* \qquad i' = 1,\dots,n.
\] 
By formula \eqref{Eq4.1} in conjunction with Lemmas \ref{L3.6}, \ref{L3.4}, we have 
\begin{align*}
\varphi((y_i(s)-y_i(r))a^*) 
&= \varphi^s((y_i(s)-y_i(r))a^*) \\
&= \sigma_0^\mathrm{frBM}((y_i(s)-y_i(r))a^*) + \sqrt{-1}\int_r^s \varphi^t(y_i(t) a^* \xi_i(t))\,dt \\
&= \sigma_0^\mathrm{frBM}((y_i(s)-y_i(r))a^*) - \sqrt{-1}\int_r^s \varphi^t(\xi_i(t)y_i(t) a^*)\,dt,
\end{align*}
where we used the trace property. Since $y_i(t) = e^{t/2}u_i(t)$ behaves a martingale with respect to the filtration determined by $t \mapsto u(t)$ under $\sigma_0^\mathrm{frBM}$, i.e., $(L^\infty(\sigma_0^\mathrm{frBM})_t)_{t \geq 0}$,  by \cite[Subsection 2.3]{Biane:Fields97}, the first term on the last line must vanish, implying that $(z_i(s) - z_i(r)|a)_{L^2(\varphi)}  = 0$. This directly leads to the desired assertion.  
\end{proof} 

\begin{remark} \label{R4.2}
The aforementioned lemma remains valid even when $\xi \in \mathcal{H}(\varphi)$; however, in this more general case, we can only guarantee that the resulting martingale belongs to $L^2(\varphi)$.  
\end{remark}

Using this martingale $z(t)=(z_1(t),\dots,z_n(t))$, we construct the non-commutative stochastic integral 
\begin{equation}\label{Eq4.2}
b_i(t) := -\sqrt{-1}\int_0^t dz_i(s)\,y_i(s)^{-1}, \qquad i=1,\dots,n,
\end{equation}
which defines a martingale adapted to the same filtration $(L^2(\varphi)_t)_{t \geq 0}$. This construction can be carried out by essentially following standard techniques in stochastic analysis (see, e.g., \cite[section 3.2]{KaratzasShreve:Book}), as previously demonstrated by Jekel--Kemp--Nikitopoulos \cite[section 4]{JekelKempNikitopoulos:JFA26}. 

We will prove that $b(t) = (b_1(t),\dots,b_n(t))$ is an $n$-dimensional free Brownian motion adapted to the filtration $(L^\infty(\varphi)_t)_{t \geq 0}$, utilizing Dabrowski's generalization \cite[Theorem 22]{Dabrowski:preprint10} of Biane--Capitaine--Guionnet's L\'{e}vy-type characterization \cite[Theorem 6.2]{BianeCapitaineGuionnet:InventMath03}.  

\begin{remark}[Heuristic derivation of the formula \eqref{Eq4.2}] \label{R4.3}
The following provides the intuition behind the derivation of the formula \eqref{Eq4.2}. If the formula \eqref{freeSDE} holds, the dynamics are given by $dy(t) = \sqrt{-1}(db(t)+\xi(t)\,dt)\,y(t)$. This suggests that the driving Brownian motion can be formally expressed as 
\[
db(t) = -\sqrt{-1}(dy(t)-\sqrt{-1}\,\xi(t)y(t)\,dt)y(t)^{-1}. 
\]
To formalize this using stochastic integration, we first established in Lemma \ref{L4.1} that the process $dy(t) - \sqrt{-1}\,\xi(t)y(t)\,dt$  yields a martingale.   
\end{remark} 

In what follows, we will repeatedly use the following properties:  

\begin{lemma}\label{L4.4}
$\widetilde{\varphi}(\widetilde{y}_i(t)) = 1$ and $\widetilde{\varphi}(\widetilde{y}_i(t)^2) = 1-t$ for every $t \geq 0$. 
\end{lemma}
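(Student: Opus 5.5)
The plan is to reduce both identities to the moments of a single free unitary Brownian motion and then rescale. Under $\widetilde{\varphi}$, item (e.ii) says that $\widetilde{u}_i(t)$ is distributed as a left-increment free unitary Brownian motion. Hence it suffices to know $\widetilde{\varphi}(\widetilde{u}_i(t))$ and $\widetilde{\varphi}(\widetilde{u}_i(t)^2)$. Since $\widetilde{y}_i(t)=e^{t/2}\widetilde{u}_i(t)$, we need
\[
\widetilde{\varphi}(\widetilde{u}_i(t))=e^{-t/2}, \qquad \widetilde{\varphi}(\widetilde{u}_i(t)^2)=(1-t)e^{-t}.
\]
These are the first two moments of Biane's free unitary Brownian motion, $\mu_k(t)=e^{-kt/2}\sum_{j=0}^{k-1}\frac{(-t)^j}{j!}k^{j-1}\binom{k}{j+1}$; for $k=1,2$ this gives $e^{-t/2}$ and $e^{-t}(1-t)$. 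Multiplying by $e^{t/2}$ and $e^{t}$ respectively yields the claim. So one option is simply to cite \cite{Biane:Fields97}.

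To keep the argument self-contained in the paper's spirit, I would instead derive the moments from the free SDE $d\widetilde{u}(t)=\sqrt{-1}\,d\widetilde{b}(t)\widetilde{u}(t)-\frac12\widetilde{u}(t)\,dt$, which defines the left-increment free unitary Brownian motion. In $y$-coordinates this reads $d\widetilde{y}(t)=\sqrt{-1}\,d\widetilde{b}(t)\widetilde{y}(t)$. Since this has no drift, $\widetilde{y}_i$ is a martingale, so $\widetilde{\varphi}(\widetilde{y}_i(t))=\widetilde{\varphi}(\widetilde{y}_i(0))=1$; this is the martingale property already quoted from \cite[Subsection 2.3]{Biane:Fields97} in the proof of Lemma \ref{L4.1}. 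For the square, apply the free It\^{o} product rule:
\[
d(\widetilde{y}^2)=(d\widetilde{y})\widetilde{y}+\widetilde{y}\,(d\widetilde{y})+(d\widetilde{y})(d\widetilde{y}).
\]
The two first-order terms have zero trace, since they are free stochastic integrals. The correction term is $-\,d\widetilde{b}\,\widetilde{y}\,d\widetilde{b}\,\widetilde{y}$. By the free It\^{o} rule $d\widetilde{b}\,A\,d\widetilde{b}=\widetilde{\varphi}(A)\,dt$ for adapted $A$, it contributes $-\widetilde{\varphi}(\widetilde{y})\,\widetilde{y}\,dt$. Taking traces gives
\[
\frac{d}{dt}\widetilde{\varphi}(\widetilde{y}^2)=-\widetilde{\varphi}(\widetilde{y})^2=-1,
\]
and with initial value $1$ this yields $\widetilde{\varphi}(\widetilde{y}_i(t)^2)=1-t$.

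The main point needing care is the free It\^{o} correction $d\widetilde{b}\,A\,d\widetilde{b}=\widetilde{\varphi}(A)\,dt$; it is standard (Biane--Speicher). Alternatively, one can avoid stochastic calculus by using the multiplicative-increment structure. Write $\widetilde{u}(t+h)=w\,\widetilde{u}(t)$ with $w$ free from $\widetilde{u}(t)$. Then freeness gives
\[
\widetilde{\varphi}(w\widetilde{u}\,w\widetilde{u})=\widetilde{\varphi}(w^2)\widetilde{\varphi}(\widetilde{u})^2+\widetilde{\varphi}(w)^2\widetilde{\varphi}(\widetilde{u}^2)-\widetilde{\varphi}(w)^2\widetilde{\varphi}(\widetilde{u})^2.
\]
The small-$h$ asymptotics $\widetilde{\varphi}(w)=1-h/2+o(h)$ and $\widetilde{\varphi}(w^2)=1-2h+o(h)$ then recover the same ODE. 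In this second route, the only input to verify is those short-time asymptotics of the increments, which follow from the generator of the semigroup.
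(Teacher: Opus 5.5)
Your proposal is correct and matches the paper's proof, which rescales by $e^{t/2}$ and $e^{t}$ and reads off the first two moments $e^{-t/2}$ and $(1-t)e^{-t}$ of the free unitary Brownian motion from Biane's Lemma 1. Your self-contained free It\^{o} derivation, giving $\frac{d}{dt}\widetilde{\varphi}(\widetilde{y}_i(t)^2)=-\widetilde{\varphi}(\widetilde{y}_i(t))^2=-1$, is also correct and is essentially the $k=2$ case of Biane's own proof of that lemma.
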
 
\begin{proof}
Since $\widetilde{y}_i(t) = e^{t/2}\widetilde{u}_i(t)$ and since $\widetilde{u}_i(t)$ is a left-increment unitary Brownian motion under $\widetilde{\varphi}$, the desired facts immediately follow from \cite[Lemma 1]{Biane:Fields97}.    
\end{proof}

The next lemma is a key in our discussion. Although its proof involves a straightforward but tedious calculation based on the formula \eqref{Eq4.1}, the result is essential for our construction. 

\begin{lemma}\label{L4.5} 
For any $s > r \geq 0$, $i,j = 1,\dots,n$ and $a, b \in \mathbb{C}\langle x,u\rangle_r$, the following identity holds:  
\[
\varphi((z_i(s)-z_i(r))a(z_j(s)-z_j(r))^* b) 
=
\delta_{i,j}(e^s - e^r)\varphi(a)\varphi(b) = \delta_{i,j}\int_r^s \varphi(a)\varphi(b)\,e^t\,dt. 
\]
In particular, the measure $\kappa_{z_i}$ of the martingale $z_i(t)$ in the sense of \cite[Lemma 4.13]{JekelKempNikitopoulos:JFA26} is precisely $\kappa_{z_i}(dt) = e^t\,dt$.  
\end{lemma}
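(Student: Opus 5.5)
The plan is to expand $z_i(s)-z_i(r) = (y_i(s)-y_i(r)) - \sqrt{-1}\int_r^s \xi_i(t)y_i(t)\,dt$ and compute the resulting four terms via formula \eqref{Eq4.1}. First reduce to a cleaner quantity. By Lemma \ref{L4.1}, $z_i$ is a martingale; write $\Delta y_i := y_i(s)-y_i(r)$ and $\Delta A_i := \sqrt{-1}\int_r^s \xi_i(t)y_i(t)\,dt$. Then
\[
\varphi(\Delta z_i\, a\, \Delta z_j^*\, b) = \varphi(\Delta y_i\, a\,\Delta y_j^*\, b) - \varphi(\Delta A_i\, a\, \Delta y_j^* b) - \varphi(\Delta y_i\, a\, \Delta A_j^* b) + \varphi(\Delta A_i\, a\,\Delta A_j^* b).
\]
The goal is to show that the drift contributions cancel in the same way as in It\^o's formula for $|z|^2$, leaving only the quadratic-variation term $\delta_{i,j}\int_r^s \varphi(a)\varphi(b)e^t\,dt$.

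\textbf{Step 1: the leading term.} Apply \eqref{Eq4.1} with $T=s$ (using $\varphi^s=\varphi$ on $\mathbb{C}\langle x,u\rangle_s$) to the element $c:=\Delta y_i\, a\, \Delta y_j^*\, b$, where $y_j^* = e^{t}y_j^{-1}$ up to the scalar factor $e^{t}$. This gives
\[
\varphi(c)=\sigma_0^{\mathrm{frBM}}(c)+\int_r^s\widetilde{\varphi}(\Pi^t(D_{\xi(t)}c))\,dt.
\]
\begin{itemize}
\item The free-BM term $\sigma_0^{\mathrm{frBM}}(c)$ is computed by Biane's It\^o calculus for the free unitary Brownian motion (cf.\ \cite{Biane:Fields97}) together with Lemma \ref{L4.4}. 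Equivalently, I would treat the difference as a function of $s$: differentiate, using freeness of the increment $\widetilde{y}$ and $\widetilde{\varphi}(\widetilde{y}(h))=1$, $\widetilde{\varphi}(\widetilde{y}(h)^2)=1-h$. This should give exactly $\delta_{i,j}(e^s-e^r)\sigma_0^{\mathrm{frBM}}(a)\sigma_0^{\mathrm{frBM}}(b)$.
\item The problem is that the answer must involve $\varphi(a)\varphi(b)$, not $\sigma_0^{\mathrm{frBM}}$. So it is better to condition at time $r$ and use $\varphi^t$ for $t\in[r,s]$.
\end{itemize}
Concretely, I would apply \eqref{Eq4.1} for $T=\tau$ and $T=r$ and subtract:
\[
\varphi^\tau(c)-\varphi^r(c)=\int_r^\tau \widetilde{\varphi}(\Pi^t(D_{\xi(t)}c))\,dt .
\]
Here $\varphi^r(c)$ is computed by freeness: $\Pi^r$ replaces $y_i(s)$ by $\widetilde{y}_i(s-r)y_i(r)$, and then $\widetilde{\varphi}$-freeness of $\widetilde{y}$ from $\mathbb{C}\langle x,u\rangle$ (the moments in Lemma \ref{L4.4}, plus $\widetilde{\varphi}(\widetilde{y}_i\widetilde{y}_i^*)=e^{h}$) yields $\delta_{i,j}(e^s-e^r)\varphi(a)\varphi(b)$. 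The key computation is that for the free multiplicative increment $w=\widetilde{y}_i(h)-1$, which is centered, one has $\widetilde{\varphi}(w\,\alpha\, w^*\beta)=\widetilde{\varphi}(ww^*)\,\varphi(\alpha)\varphi(\beta)$ with $\widetilde{\varphi}(ww^*)=e^h-1$. The factor $y_i(r)$ is absorbed into $\alpha,\beta$ and gives $e^r$. For $i\ne j$ the increments are $*$-free and centered, so this term vanishes.

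\textbf{Step 2: the drift terms.} Compute $D_{\xi(t)}c$ by Lemma \ref{L3.5}(1) and the Leibniz rule. Only the factors $y_i(s)$ and $y_j(s)^*$ are differentiated for $t\in(r,s]$, and each produces $\pm\sqrt{-1}\,y(s)y(t)^{-1}\xi(t)y(t)$. Under $\Pi^t$ and $\widetilde{\varphi}$, the factor $\widetilde{y}(s-t)$ integrates out by freeness, using $\widetilde{\varphi}(\widetilde{y})=1$. The outcome should be
\[
\int_r^s\Big[\varphi\big(\sqrt{-1}\,\xi_i(t)y_i(t)\,a\,\Delta y_j^*(t)\,b\big)+\varphi\big(\Delta y_i(t)\,a\,(\sqrt{-1}\,\xi_j(t)y_j(t))^*\,b\big)\Big]dt,
\]
with $\Delta y(t)=y(t)-y(r)$. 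Then combine with the three cross terms $\varphi(\Delta A\cdots)$ in the expansion. Writing the cross terms by Fubini as integrals over $t$ and using the martingale property of $z$ from Lemma \ref{L4.1} (conditioning at time $t$ with $E_t^\varphi$), everything cancels. This is the noncommutative analogue of the It\^o product rule. The $\Delta A\,a\,\Delta A^*$ term splits into the two time-ordered halves $t'<t$ and $t<t'$, and each half is absorbed by a cross term. Throughout, local $L^\infty$ boundedness (A) justifies Fubini and the moving of $E_t^\varphi$ inside.

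\textbf{Main obstacle.} The main obstacle is the bookkeeping in Step 2. Evaluating $\widetilde{\varphi}(\Pi^t(\cdot))$ requires integrating out the free increment $\widetilde{y}(s-t)$ when it appears twice, once from $y_i(s)$ and once from $y_j(s)^*$. There, second-moment terms $\widetilde{\varphi}(\widetilde{y}\,\alpha\,\widetilde{y}^*\beta)$ produce extra contributions proportional to $e^{s-t}-1$. These must cancel against $d/dt$ of the Step 1 quantity evaluated at intermediate times. I would first try to organize everything as a function $F(\tau)=\varphi(\Delta z_i(\tau)\,a\,\Delta z_j(\tau)^*b)$ and show that $F'(\tau)=\delta_{i,j}e^\tau\varphi(a)\varphi(b)$ with $F(r)=0$. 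This avoids the double-increment terms, because differentiation at $\tau$ requires only first-order expansion of $\widetilde{y}(h)$ as $h\to0$. The final claim about $\kappa_{z_i}$ is then immediate by comparing with the defining identity in \cite[Lemma 4.13]{JekelKempNikitopoulos:JFA26}.
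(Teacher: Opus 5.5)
Your Steps 1--2 are the paper's proof. You use the same four-term expansion and apply \eqref{Eq4.1} between times $r$ and $s$ to the polynomial $c=\Delta y_i\,a\,\Delta y_j^*\,b$, with $\varphi^r(c)$ evaluated by freeness via $\widetilde{\varphi}(w\alpha w^*\beta)=\widetilde{\varphi}(ww^*)\varphi(\alpha)\varphi(\beta)$. You write the correct drift expression and cancel the cross terms against the two time-ordered halves of the double integral. Evaluating the cross terms through the martingale property of $z_j$ and $E_t^\varphi$ is, if anything, more careful than the paper, which feeds expressions containing $\xi_i(t)$ (not polynomials) into \eqref{Eq4.1}.

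Your ``main obstacle'', however, is misdiagnosed. The reason you give for the Step 2 formula (``$\widetilde{y}(s-t)$ integrates out using $\widetilde{\varphi}(\widetilde{y})=1$'') only covers terms with a single increment. For the two terms containing both $\widetilde{y}_i(h)$ and $\widetilde{y}_j(h)^*$, $h=s-t$, freeness gives
\[
\widetilde{\varphi}(\widetilde{y}_i(h)\,\alpha\,\widetilde{y}_j(h)^*\,b)=\varphi(\alpha b)+\big(\widetilde{\varphi}(\widetilde{y}_i(h)\widetilde{y}_j(h)^*)-1\big)\varphi(\alpha)\varphi(b).
\]
This is applied with $\alpha=\xi_i(t)y_i(t)ay_j(t)^*$ (coefficient $+\sqrt{-1}$) and with $\alpha=y_i(t)ay_j(t)^*\xi_j(t)$ (coefficient $-\sqrt{-1}$). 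The excess vanishes on the spot in both cases:
\begin{itemize}
\item for $i\neq j$, because $\widetilde{\varphi}(\widetilde{y}_i(h)\widetilde{y}_j(h)^*)=\widetilde{\varphi}(\widetilde{y}_i(h))\widetilde{\varphi}(\widetilde{y}_j(h)^*)=1$;
\item for $i=j$, because $\varphi(\xi_i(t)\,y_i(t)ay_i(t)^*)=\varphi(y_i(t)ay_i(t)^*\,\xi_i(t))$ by the trace property.
\end{itemize}
So nothing has to be matched against ``$d/dt$ of the Step 1 quantity''; no such cancellation occurs or is needed. With this line added, your main argument is complete and coincides with the paper's.

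Your fallback via $F(\tau)$ is also sound and is a genuinely different route, but two points must be made explicit.
\begin{itemize}
\item \emph{Orthogonality of increments.} By Lemma \ref{L4.1}, $F(\tau+h)-F(\tau)=\varphi(\delta z_i\,a\,\delta z_j^*\,b)$ with $\delta z=z(\tau+h)-z(\tau)$. This holds because $a\,\Delta z_j(\tau)^*b$ and $b\,\Delta z_i(\tau)a$ lie in $L^\infty(\varphi)_\tau$, so the terms linear in $\delta z$ vanish. Without this step the differentiation produces first-order cross terms.
\item \emph{Only one term is of order $h$.} On $[\tau,\tau+h]$ it is $\varphi^\tau(c)=\delta_{i,j}(e^h-1)e^\tau\varphi(a)\varphi(b)$, computed exactly by freeness. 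The drift integral in \eqref{Eq4.1} and the cross terms are bounded by $Ch$ times the $L^2$-moduli of continuity of $y$ and $\widetilde{y}$ on $[\tau,\tau+h]$, hence $o(h)$ uniformly on compacts by (A) and continuity of $\varphi$. The drift--drift term is $O(h^2)$.
\end{itemize}
Summing the resulting identity over a partition of $[r,s]$ and letting the mesh go to $0$ gives the lemma. This route applies \eqref{Eq4.1} only to polynomials and replaces the exact bookkeeping by crude estimates plus a limit. The paper's route instead obtains the identity exactly at finite $s-r$ by direct algebra.
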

\begin{proof}
We first observe that 
\begin{align*}
\varphi((z_i(s)-z_i(r))a(z_j(s)-z_j(r))^* b) 
&=
\varphi(y_i(s)-y_i(r))a(y_j(s)-y_j(r))^* b) \\
&\quad-
\sqrt{-1}\int_r^s \varphi(\xi_i(t)y_i(t) a (y_j(s)-y_j(r))^* b)\,dt \\
&\quad+ 
\sqrt{-1}\int_r^s \varphi((y_i(s)-y_i(r))a y_j(t)^* \xi_j(t) b)\,dt \\
&\quad+ 
\int_r^s \int_r^s \varphi(\xi_i(t_1)y_i(t_1) a y_j(t_2)^* \xi_j(t_2) b)\,dt_1\,dt_2. 
\end{align*} 
We will compute these four terms individually by applying the formula \eqref{Eq4.1}. 

Applying the formula \eqref{Eq4.1} to the first term in conjunction with Remark \ref{R3.7}, the free independence (e.iii) in Subsection 2.5, Lemma \ref{L4.4} and $\widetilde{y}_i(t)\widetilde{y}_i(t)^* = e^t 1$, we have 
\begin{align*}
&\varphi(y_i(s)-y_i(r))a(y_j(s)-y_j(r))^* b) \\
&=
\widetilde{\varphi}((\widetilde{y}_i(s-r)-1)(y_i(r)a y_j(r)^*)(\widetilde{y}_j(s-r)-1)^* b) \\ 
&\quad+ \sqrt{-1} 
\int_r^s 
\Big\{ 
\widetilde{\varphi}(\widetilde{y}_i(s-t)(\xi_i(t)y_i(t)ay_j(t)^*)\widetilde{y}_j(s-t)^* b) - 
\widetilde{\varphi}(\widetilde{y}_i(s-t)\xi_i(t)y_i(t)ay_j(r)^* b)
\\
&\phantom{aaaaaaaaaai}
-  \widetilde{\varphi}((\widetilde{y}_i(s-t)(y_i(t)ay_j(t)^* \xi_j(t))\widetilde{y}_i(s-t)^* b) 
+ \widetilde{\varphi}(y_i(r)ay_j(t)^* \xi_j(t)\widetilde{y}_j(s-t)^* b)
\Big\}\,dt  \\
&= 
\widetilde{\varphi}((\widetilde{y}_i(s-r)-1)(\widetilde{y}_j(s-r)-1)^*)\varphi(y_i(r)ay_j(r)^*)\varphi(b) \\
&\quad+ \sqrt{-1} 
\int_r^s 
\Big\{ 
\widetilde{\varphi}(\widetilde{y}_i(s-t)\widetilde{y}_j(s-t)^*) \varphi(\xi_i(t)y_i(t)ay_j(t)^*)\varphi(b) 
+ \varphi(\xi_i(t)y_i(t)ay_j(t)^* b) \\
&\phantom{aaaaaaaaaaaaaaaaaaaaaaaaaaaaaaa} 
-\varphi(\xi_i(t)y_i(t)ay_j(t)^*)\varphi(b)
- \varphi(\xi_i(t)y_i(t)ay_j(r)^* b)
\\
&\phantom{aaaaaaaaaai}
- \widetilde{\varphi}(\widetilde{y}_i(s-t)\widetilde{y}_j(s-t)^*)\varphi(y_i(t)ay_j(t)^* \xi_j(t))\varphi(b) 
- \varphi(y_i(t)ay_j(t)^* \xi_j(t)b) \\
&\phantom{aaaaaaaaaaaaaaaaaaaaaaaaaaaaaaa}
+ \varphi(y_i(t)ay_j(t)^* \xi_j(t))\varphi(b) 
+ \varphi(y_i(r)ay_j(t)^* \xi_j(t)b)
\Big\}\,dt \\
&= 
\delta_{i,j}(e^s-e^r)\varphi(a)\varphi(b) \\
&\quad+ \sqrt{-1} 
\int_r^s 
\Big\{ 
\varphi(\xi_i(t)y_i(t)ay_j(t)^* b)
- \varphi(\xi_i(t)y_i(t)ay_j(r)^* b) \\
&\phantom{aaaaaaaaaai} 
- \varphi(y_i(t)ay_j(t)^* \xi_j(t)b)
+ \varphi(y_i(r)ay_j(t)^* \xi_j(t)b)
\Big\}\,dt.
\end{align*}
Using the same facts that we used above, we also have
\begin{align*}
&\sqrt{-1}\int_r^s \varphi(\xi_i(t)y_i(t) a (y_j(s)-y_j(r))^* b)\,dt \\
&= 
\sqrt{-1}\int_r^s 
\Bigg\{
\widetilde{\varphi}(\xi_i(t)y_i(t)a(\widetilde{y}_j(s-t)y_j(t)-y_j(r))^* b) \\
&\phantom{aaaaaaaaaai}
-\sqrt{-1} \int_t^s \widetilde{\varphi}(\xi_i(t)y_i(t)ay_j(t')^* \xi_j(t')\widetilde{y}_j(s-t')^*b)\,dt'
\Bigg\}dt' \\
&= 
\sqrt{-1}\int_r^s \Big\{ \varphi(\xi_i(t)y_i(t)ay_j(t)^* b) - \varphi(\xi_i(t)y_i(t)ay_j(r)^* b)\Big\}\,dt \\
&\phantom{aa}
+\int\int_{r \leq t_1 \leq t_2 \leq s} \varphi(\xi_i(t_1)y_i(t_1)a y_j(t_2)^* \xi_j(t_2)b)\,dt_1 dt_2. 
\end{align*} 
Similarly, 
\begin{align*} 
&\sqrt{-1}\int_r^s \varphi((y_i(s)-y_i(r))a y_j(t)^* \xi_j(t) b)\,dt \\
&= 
\sqrt{-1}\int_r^s \Big\{ \varphi(y_i(t)ay_j(t)^* \xi_j(t)b) - \varphi(y_i(r)ay_j(r)^* \xi_j(t)b)\Big\}\,dt \\
&\phantom{aa}
+\int\int_{r \leq t_2 \leq t_1 \leq s} \varphi(\xi_i(t_1)y_i(t_1)a y_j(t_2)^* \xi_j(t_2)b)\,dt_1 dt_2. 
\end{align*}
These calculations immediately imply the desired formula.
\end{proof} 

As mentioned previously, we adopt the standard framework for constructing stochastic integrals within an $L^2$-setting; we refer the reader to \cite[Subsection 4.2]{JekelKempNikitopoulos:JFA26} for its specific adaptation to the non-commutative context. Specifically, the stochastic integral appearing in the formula \eqref{Eq4.2} is well defined in $L^2(\varphi)$ as the limit of an approximating net 
\begin{equation*} 
b_{i,\triangle}(t) := -\sqrt{-1}\sum_{k=1}^m (z_i(s_k)-z_i(s_{k-1}))y_i(s_{k-1})^{-1} 
\end{equation*}
over all possible divisions $\triangle : 0 = s_0 < s_1 < \dots < s_m = t$, and 
\begin{equation}\label{Eq4.3}
\Vert b_i(s) - b_i(r)\Vert_{L^2(\varphi)}^2 = \left\Vert \int_r^s dz_i(dt)\,y_i(t)^{-1}\right\Vert_{L^2(\varphi)}^2 = \int_r^s \Vert y_i(t)^{-1}\Vert_{L^2(\varphi)}^2\,\kappa_{z_i}(dt) = \int_r^s e^{-t}\,e^t\,dt = s-r 
\end{equation} 
for any $s > r \geq 0$. 
Consequently, the definition of $b_i(t)$ is well defined and its covariance coincides with that of free Brownian motion. We will next prove that the $b_i(t)$ is self-adjoint. 

\begin{lemma}\label{L4.6} 
$b_i(t) = b_i(t)^*$ for every $t \geq 0$ and $i=1,\dots,n$.  
\end{lemma}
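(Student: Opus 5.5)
The plan is to show that $\Vert b_i(t)-b_i(t)^*\Vert_{L^2(\varphi)}^2=0$. Expanding the square and using that the adjoint is isometric on $L^2(\varphi)$, we get
\[
\Vert b_i(t)-b_i(t)^*\Vert_{L^2(\varphi)}^2 = 2\Vert b_i(t)\Vert_{L^2(\varphi)}^2 - 2\,\mathrm{Re}\,\varphi(b_i(t)^2).
\]
By \eqref{Eq4.3} we have $\Vert b_i(t)\Vert_{L^2(\varphi)}^2=t$. So it suffices to prove $\varphi(b_i(t)^2)=t$. Since $b_{i,\triangle}(t)\to b_i(t)$ in $L^2(\varphi)$ and $(\xi,\eta)\mapsto\varphi(\xi\eta)$ is jointly continuous on $L^2(\varphi)\times L^2(\varphi)$, I will compute $\varphi(b_{i,\triangle}(t)^2)$ and show that it tends to $t$ as the mesh of $\triangle$ tends to $0$.

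Write $\Delta_k:=z_i(s_k)-z_i(s_{k-1})$ and $c_k:=y_i(s_{k-1})^{-1}\in\mathbb{C}\langle x,u\rangle_{s_{k-1}}$. Then
\[
\varphi(b_{i,\triangle}(t)^2)=-\sum_{k,l}\varphi(\Delta_k c_k\Delta_l c_l).
\]

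\emph{Off-diagonal terms.} For $k<l$, the element $c_l\Delta_k c_k$ lies in $L^\infty(\varphi)_{s_{l-1}}$. By the trace property and Lemma \ref{L4.1} we have $E^\varphi_{s_{l-1}}(\Delta_l)=0$, so $\varphi(\Delta_l\,c_l\Delta_k c_k)=0$. The case $k>l$ is symmetric. Only the diagonal terms $-\sum_k\varphi(\Delta_k c_k\Delta_k c_k)$ survive.

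\emph{Diagonal terms.} I will prove a non-adjoint analogue of Lemma \ref{L4.5}: for $s>r$ and $a,b\in\mathbb{C}\langle x,u\rangle_r$,
\[
\varphi((z_i(s)-z_i(r))a(z_i(s)-z_i(r))b) = -(s-r)\,\varphi(y_i(r)a)\,\varphi(y_i(r)b) + R(r,s;a,b),
\qquad |R(r,s;a,b)|\le o(s-r)\,\Vert a\Vert\,\Vert b\Vert .
\]
The $o(s-r)$ is uniform for $r,s$ in a compact set. The computation follows the proof of Lemma \ref{L4.5} exactly: expand $z_i=y_i-\sqrt{-1}\int\xi_iy_i$ into four terms, and evaluate each through \eqref{Eq4.1} and Remark \ref{R3.7}.

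The leading term is
\[
\widetilde{\varphi}\big((\widetilde{y}_i(s-r)-1)\,y_i(r)a\,(\widetilde{y}_i(s-r)-1)\,y_i(r)b\big).
\]
By freeness (e.iii) together with $\widetilde{\varphi}(\widetilde{y}_i(\tau)-1)=0$, this equals $\widetilde{\varphi}((\widetilde{y}_i(s-r)-1)^2)\,\varphi(y_i(r)a)\,\varphi(y_i(r)b)$. By Lemma \ref{L4.4}, $\widetilde{\varphi}((\widetilde{y}_i(\tau)-1)^2)=(1-\tau)-2+1=-\tau$.

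The remaining contributions all involve $\xi_i$. They are either single integrals over $[r,s]$ whose integrand vanishes as $t\downarrow r$, or double integrals over $[r,s]^2$. I expect exact cancellations analogous to those in Lemma \ref{L4.5}. Even without them, the local boundedness (A), the bound $\Vert\widetilde{y}_i(\tau)\Vert\le e^{\tau/2}$, and the strong continuity of $t\mapsto u_i(t)$ give an $o(s-r)$ bound. The double integrals are $O((s-r)^2)$. Each single integral carries a factor such as $\Vert y_i(t)-y_i(r)\Vert_{L^2}$ or $\Vert\widetilde{y}_i(s-t)-1\Vert_{L^2}$, which tends to $0$ uniformly.

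Now take $a=b=c_k=y_i(s_{k-1})^{-1}$. Then $y_i(s_{k-1})a=1=y_i(s_{k-1})b$, so
\[
-\sum_k\varphi(\Delta_kc_k\Delta_kc_k)=\sum_k(s_k-s_{k-1})+\sum_k o(s_k-s_{k-1}).
\]
Here $\Vert c_k\Vert\le e^{-s_{k-1}/2}\le1$, so the error sum tends to $0$ with the mesh. Hence $\varphi(b_i(t)^2)=t$, and therefore $\Vert b_i(t)-b_i(t)^*\Vert_{L^2(\varphi)}^2=2t-2t=0$.

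The main obstacle is the drift bookkeeping in the diagonal-term estimate: controlling, uniformly over the partition, the terms in which one increment $\widetilde{y}_i$ is paired with a drift integral. This is exactly where the $L^\infty$-boundedness (A) enters. Without it the products $\xi_i(t)y_i(t)a\cdots$ need not be controlled in $L^1(\varphi)$.
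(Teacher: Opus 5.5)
Your proposal is correct and follows essentially the paper's argument. The paper also kills the cross terms between distinct increments by the martingale property of Lemma~\ref{L4.1}, and evaluates the diagonal terms through \eqref{Eq4.1}, freeness (e.iii) and Lemma~\ref{L4.4}. The paper only bundles your two pieces per increment, as $\Vert X_k+X_k^*\Vert_{L^2(\varphi)}^2=2\varphi(X_kX_k^*)+2\,\mathrm{Re}\,\varphi(X_k^2)$ with $X_k:=(z_i(s_k)-z_i(s_{k-1}))y_i(s_{k-1})^{-1}$, and takes the first piece from Lemma~\ref{L4.5} (equivalently \eqref{Eq4.3}).

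Your choice to control the drift contributions only up to $o(s_k-s_{k-1})$, via (A) and Cauchy--Schwarz, is in fact the safer formulation. The drift in general gives $\mathrm{Re}\,\varphi(X_k^2)$ a small nonzero higher-order correction: for $\xi_i\equiv\lambda 1$ with $\lambda\in\mathbb{R}$ one finds $\varphi(X_k^2)=-\int_0^{\sigma}e^{2\sqrt{-1}\lambda\theta}\,d\theta$, where $\sigma=s_k-s_{k-1}$. So the exact per-increment value $-(s_k-s_{k-1})$ in the paper's display holds only up to terms that vanish as $|\triangle|\to0$.
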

\begin{proof}
Observe that 
\begin{align*}
&\Vert(-\sqrt{-1}(z_i(s_k)-z_i(s_{k-1}))y_i(s_{k-1})^{-1}) - (-\sqrt{-1}(z_i(s_k) - z_i(s_{k-1}))y_i(s_{k-1})^{-1})^*\Vert_{L^2(\varphi)}^2 \\
&=  
\Vert (z_i(s_k)-z_i(s_{k-1}))y_i(s_{k-1})^{-1} + (y_i(s_{k-1})^{-1})^*(z_i(s_k)-z_i(s_{k-1}))^*\Vert_{L^2(\varphi)}^2 \\
&=
2\Bigg(\varphi((z_i(s_k)-z_i(s_{k-1}))y_i(s_{k-1})^{-1}(y_i(s_{k-1})^{-1})^*(z_i(s_k)-z_i(s_{k-1}))^*) \\
&\phantom{aaaaaaaaaa}
+ \mathrm{Re}\varphi((z_i(s_k)-z_i(s_{k-1}))y_i(s_{k-1})^{-1}(z_i(s_k)-z_i(s_{k-1}))y_i(s_{k-1})^{-1})\Bigg) \\
&=  
2\Bigg( \int_{s_{k-1}}^{s_k} \Vert y_i(s_{k-1})^{-1}\Vert_{L^2(\varphi)}^2\,e^t\,dt 
+ \mathrm{Re}\Big(-(t-s) + \sqrt{-1}\int_{s_{k-1}}^{s_k} 2(s_k -t)\varphi(\xi_i(t))\,dt\Big)\Bigg) \\
&= 
2\Bigg( \int_{s_{k-1}}^{s_k} \Vert y_i(s_{k-1})^{-1}\Vert_{L^2(\varphi)}^2\,e^t\,dt -(s_k - s_{k-1})\Bigg). 
\end{align*} 
By the martingale property (Lemma \ref{L4.1}) we obtain that 
\begin{align*}
&\Vert b_i(t) - b_i(t)^*\Vert_{L^2(\varphi)}^2 
= 
\lim_{|\triangle| \to 0} \Vert b_{i,\triangle}(t) - b_{i,\triangle}(t)^*\Vert_{L^2(\varphi)}^2 \\
&= 
\lim_{|\triangle| \to 0} \sum_{k=1}^m \Vert(-\sqrt{-1}(z_i(s_k)-z_i(s_{k-1}))y_i(s_{k-1})^{-1}) - (-\sqrt{-1}(z_i(s_k) - z_i(s_{k-1}))y_i(s_{k-1})^{-1})^*\Vert_{L^2(\varphi)}^2 \\
&= 
2\lim_{|\triangle| \to 0} \sum_{k=1}^m \Bigg( \int_{s_{k-1}}^{s_k} \Vert y_i(s_{k-1})^{-1}\Vert_{L^2(\varphi)}^2\,e^t\,dt -(s_k - s_{k-1})\Bigg) \\
&= 
2 \Big(\int_0^t e^{-t}\,e^t\,dt - t\Big) = 0, 
\end{align*} 
where $|\triangle| := \max\{ |s_k - s_{k-1}|; k=1,\dots,m\}$. Hence we are done. 
\end{proof}

\begin{lemma}\label{L4.7} For any $s > r \geq 0$, $i,j=1,\dots,n$ and $a, b \in \mathbb{C}\langle x,u\rangle_r$, the following identity holds: 
\[
\varphi((b_i(s)-b_i(r))a(b_j(s)-b_j(r))b) = \delta_{i,j} (s-r)\varphi(a)\varphi(b) + o(s-r) 
\]
as $s-r \searrow 0$
\end{lemma}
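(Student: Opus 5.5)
The plan is to reduce the increment $b_i(s)-b_i(r)$ to its one-step Riemann approximation and then apply Lemma \ref{L4.5}. Set
\[
\beta_i := -\sqrt{-1}\,(z_i(s)-z_i(r))\,y_i(r)^{-1},
\]
which is the approximant $b_{i,\triangle}$ for the trivial division of $[r,s]$. By the isometry \eqref{Eq4.3}, applied to the integrand $y_i(t)^{-1}-y_i(r)^{-1}$ on $(r,s]$,
\[
\Vert (b_i(s)-b_i(r))-\beta_i\Vert_{L^2(\varphi)}^2=\int_r^s \Vert y_i(t)^{-1}-y_i(r)^{-1}\Vert_{L^2(\varphi)}^2\,e^t\,dt .
\]
Because $t\mapsto u_i(t)$ is $L^2$-continuous, the integrand is $o(1)$ uniformly for $t\in[r,s]$, so the whole expression is $o(s-r)$. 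The same bound holds for $\beta_j$. Since $\Vert b_i(s)-b_i(r)\Vert_{L^2}^2=s-r$ and $\Vert\beta_j\Vert_{L^2}^2=O(s-r)$ by Lemma \ref{L4.5}, swapping the exact increments for $\beta_i,\beta_j$ changes the left-hand side by only $o(s-r)$.

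The one technical point is that $\varphi(XaYb)$ is not controlled by $L^2$ norms alone. For $a,b$ bounded, Cauchy--Schwarz with the trace gives $|\varphi(XaYb)|\le \Vert a\Vert\,\Vert b\Vert\,\Vert X\Vert_{L^2}\Vert Y\Vert_{L^2}$. Expanding the difference telescopically, each resulting term is (error of size $o((s-r)^{1/2})$) times (factor of size $O((s-r)^{1/2})$), so each is $o(s-r)$.

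Next I rewrite the main term using self-adjointness. By Lemma \ref{L4.6}, $b_j(s)-b_j(r)=(b_j(s)-b_j(r))^*$, so I may replace $\beta_j$ by
\[
\beta_j^*=\sqrt{-1}\,(y_j(r)^{-1})^*(z_j(s)-z_j(r))^*,
\]
at the same $o(s-r)$ cost. The main term then becomes
\[
\varphi\bigl((z_i(s)-z_i(r))\,y_i(r)^{-1}a(y_j(r)^{-1})^*\,(z_j(s)-z_j(r))^*\,b\bigr).
\]
Both $a':=y_i(r)^{-1}a(y_j(r)^{-1})^*$ and $b$ lie in $\mathbb{C}\langle x,u\rangle_r$, so Lemma \ref{L4.5} evaluates this exactly as
\[
\delta_{i,j}(e^s-e^r)\,\varphi(a')\varphi(b).
\]
For $i=j$, $y_i(r)^{-1}$ is $e^{-r/2}u_i(r)^*$ and $(y_i(r)^{-1})^*$ is $e^{-r/2}u_i(r)$. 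By the trace property this gives $\varphi(a')=e^{-r}\varphi(a)$. Finally $(e^s-e^r)e^{-r}=s-r+O((s-r)^2)$, which yields $\delta_{i,j}(s-r)\varphi(a)\varphi(b)+o(s-r)$.

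I expect the main obstacle to be the first step: justifying the isometry estimate for a non-elementary, adapted integrand on $[r,s]$. This relies on the $L^2$-stochastic integral construction of \cite[Subsection 4.2]{JekelKempNikitopoulos:JFA26} with $\kappa_{z_i}(dt)=e^t\,dt$ from Lemma \ref{L4.5}, together with the uniform continuity of $u_i$. Some care is also needed in the Cauchy--Schwarz step, because $z_i$ belongs to $L^\infty(\varphi)$ while $b_i$ is a priori only in $L^2(\varphi)$. The bounds must therefore be arranged so that only $a$ and $b$ are used as bounded multipliers.
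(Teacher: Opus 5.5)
Your proposal is correct and follows the paper's proof essentially step for step. You approximate each increment $b_i(s)-b_i(r)$ by the one-step sum $-\sqrt{-1}(z_i(s)-z_i(r))y_i(r)^{-1}$, controlling the $L^2$-error through the isometry with $\kappa_{z_i}(dt)=e^t\,dt$ and the continuity of $u_i$. You then use the self-adjointness from Lemma~\ref{L4.6} to put the second factor in adjoint form, bound the swap by Cauchy--Schwarz with $a,b$ as the only bounded multipliers, and evaluate the main term with Lemma~\ref{L4.5}. The paper uses Lemma~\ref{L4.6} only implicitly at that step; you make it explicit.
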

\begin{proof}
Observe that
\begin{align*}
&\Vert (b_i(s) - b_i(r)) - (-\sqrt{-1}(z_i(s)-z_i(r))y_i(r)^{-1}\Vert_{L^2(\varphi)}^2 \\
&\quad= 
\int_r^s \Vert y_i(t)^{-1} - y_i(r)^{-1}\Vert_{L^2(\varphi)}^2\,\kappa_{z_i}(dt) \\
&\quad\leq 
\sup_{r \leq t \leq s}\Vert e^{-t/2}u_i(t)^* - e^{-r/2}u_i(r)^*\Vert_{L^2(\varphi)}^2\,(e^s - e^r).
\end{align*}
Hence, using (the last part of) Lemma \ref{L4.5} we have
\begin{align*}
&|\varphi((b_i(s)-b_i(r))a(b_j(s)-b_j(r))b) \\
&\phantom{aaaaaaaaaaaa}- 
\varphi((z_i(s)-z_i(r))y_i(r)^{-1}a (y_j(r)^{-1})^*(z_j(s)-z_j(r))^* b))| \\
&= 
|\varphi((b_i(s)-b_i(r))a(b_j(s)-b_j(r))^* b) \\
&\phantom{aaaaaaaaaaaa}- 
\varphi((-\sqrt{-1}(z_i(s)-z_i(r))y_i(r)^{-1})a((-\sqrt{-1}(z_j(s)-z_j(r))y_j(r)^{-1})^* b))| \\
&\leq 
\Vert a \Vert\,\Vert b\Vert\,\Vert (b_i(s)-b_i(r)) - (-\sqrt{-1}(z_i(s)-z_i(r))y_i(r)^{-1})\Vert_{L^2(\varphi)} \Vert b_j(s)-b_j(r)\Vert_{L^2(\varphi)} \\
&\quad+ 
\Vert a \Vert\,\Vert b\Vert\,\Vert (z_i(s)-z_i(r))\Vert_{L^2(\varphi)} \Vert (b_j(s)-b_j(r)) - (-\sqrt{-1}(z_j(s)-z_j(r))y_j(r)^{-1})\Vert_{L^2(\varphi)} \\
&=
\Vert a\Vert\,\Vert b\Vert\, \sup_{r \leq t \leq s} \Vert e^{-t/2}u_i(t) - e^{-r/2}u_i(r)\Vert_{L^2(\varphi)}\sqrt{(e^s-e^r)(s-r)} \\
&\quad+ 
\Vert a\Vert\,\Vert b\Vert\,\sqrt{e^s-e^r}\sup_{r \leq t \leq s} \Vert e^{-t/2}u_j(t) - e^{-r/2}u_j(r)\Vert_{L^2(\varphi)}\sqrt{e^s-e^r} \\
&= o(s-r) 
\end{align*}
as $s-r \searrow 0$, since $\varphi$ is a continuous tracial state. Consequently, we obtain that 
\begin{align*}
&\varphi((b_i(s)- b_i(r))a(b_j(s)-b_j(r))b) \\
&= 
\varphi((z_i(s)-z_i(r))y_i(r)^{-1}a (y_j(r)^{-1})^*(z_j(s)-z_j(r))^* b)) + o(s-r) \\
&= 
\delta_{i,j} (e^s - r^r) \varphi(y_i(r)^{-1}a (y_i(r)^{-1})^*)\varphi(b) + o(s-r) \\
&= 
\delta_{i,j} (s-r)\varphi(a)\varphi(b) + o(s-r)
\end{align*}
as $s-r \searrow \infty$ by Lemma \ref{L4.5} again. 
\end{proof}   

It is necessary to consider the processes $b_i(t)$ within the $L^4$-space $L^4(\varphi)$. For the reader's convenience, we provide a practical characterization of $L^4(\varphi)$. For any $\xi \in L^2(\varphi)$, we define the $L^4$-norm as  
\[
\Vert \xi \Vert_{L^4(\varphi)} := \sup\{ \Vert \xi a\Vert_{L^2(\varphi)};\ a \in C^*_R\langle x,u\rangle,\ \varphi(|a|^4) \leq 1\},  
\]
which may take the value  $+\infty$. We then denote by $L^4(\varphi)$ the set of all $\xi \in L^2(\varphi)$ such that $\Vert \xi\Vert_{L^4(\varphi)} < +\infty$. It is easy to verify that $\Vert\xi\Vert_{L^2(\varphi)} \leq \Vert\xi\Vert_{L^4(\varphi)}$ for every $\xi \in L^4(\varphi)$, and that $L^4(\varphi)$ is a Banach space with this norm. Moreover, it is clear, from the definition of the $L^4$-norm, that $L^4(\varphi)$ is closed under the left and right multiplications of elements of $L^\infty(\varphi)$. While the generalized H\"{o}lder inequality underlies this definition, it is not explicitly required here. Indeed, by applying the trace property alongside the Cauchy--Schwarz inequality, one can easily show that $\varphi(|\xi\eta|^2) \leq \varphi(|\xi|^4)^{1/2}\varphi(|\eta|^4)^{1/2}$ for all $\xi,\eta \in L^\infty(\varphi)$. From this, it follows that $L^\infty(\varphi) \subseteq L^4(\varphi)$ and $\Vert \xi\Vert_{L^4(\varphi)} = \varphi(|\xi|^4)^{1/4}$ for all $\xi \in L^\infty(\varphi)$. Moreover, a standard approximation argument shows that $\Vert\xi\eta\Vert_{L^2(\varphi)} \leq \Vert\xi\Vert_{L^4(\varphi)} \Vert\eta\Vert_{L^4(\varphi)}$ holds for any pair $\xi,\eta \in L^4(\varphi)$. 

\begin{lemma}\label{L4.8} 
For each $T > 0$, there exists a constant $C_{1,T} > 0$ such that $\Vert z_i(s) - z_i(r)\Vert_{L^4(\varphi)}^4 \leq C_{1,T}(s-r)^2$ holds for any $0 \leq r < s \leq T$ with $0 \leq s-r \leq 1/16$. 
\end{lemma}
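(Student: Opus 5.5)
Write $\Delta := z_i(s)-z_i(r)$. Since $\Delta \in L^\infty(\varphi)$, we have $\Vert\Delta\Vert_{L^4(\varphi)}^4 = \varphi((\Delta\Delta^*)^2)$. The plan is to compute this fourth moment explicitly from formula \eqref{Eq4.1}, along the lines of the proof of Lemma \ref{L4.5}.

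First, split $\Delta = Y - \sqrt{-1}\,J$ with $Y := y_i(s)-y_i(r)$ and $J := \int_r^s \xi_i(t)y_i(t)\,dt$. By (A) there is $M_T := \sup_{t\le T}\Vert\xi_i(t)\Vert_{L^\infty(\varphi)} < \infty$, and $\Vert y_i(t)\Vert \le e^{T/2}$, so $\Vert J\Vert_{L^\infty(\varphi)} \le M_T e^{T/2}(s-r)$. Hence $\Vert J\Vert_{L^4}^4 = O((s-r)^4)$, and by the triangle inequality in $L^4(\varphi)$ it suffices to bound $\Vert Y\Vert_{L^4(\varphi)}^4 = \varphi(YY^*YY^*)$ by $C(s-r)^2$.

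Second, evaluate $\varphi(YY^*YY^*)$ by expanding $Y$ into $y_i(s)$ and $y_i(r)$ terms and applying \eqref{Eq4.1} with $T=s$. The base term $\sigma_0^{\mathrm{frBM}}(YY^*YY^*)$ is, by Remark \ref{R3.7} and freeness of increments, the fourth moment of $(\widetilde y_i(s-r)-1)y_i(r)$. This is bounded by $e^{2r}\widetilde\varphi(|\widetilde y_i(h)-1|^4)$ with $h=s-r$. Using the known moments of the free unitary Brownian motion (Lemma \ref{L4.4}, extended to mixed moments of order four via \cite[Lemma 1]{Biane:Fields97}), this quantity is $O(h^2)$; the first-order terms cancel exactly as the covariance computation gave $e^h-1$.

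Third, the drift term $\int_r^s \widetilde\varphi(\Pi^t(D_{\xi(t)}(YY^*YY^*)))\,dt$ is a sum of terms in which one factor $Y$ (or $Y^*$) is replaced by something bounded by $\Vert\xi_i(t)\Vert$ times exponentials. After applying $\Pi^t$, the remaining three factors are of the form $\widetilde y_i(s-t)y_i(t)-y_i(r)$ or $\widetilde y_i(s-t)y_i(t)$, so they are not small in $L^\infty$. A crude bound gives only $O(h)$, which is not enough. I will do this more carefully. By Hölder and $\Vert\cdot\Vert_{L^1}\le\Vert\cdot\Vert_{L^4}^{\,}\Vert\cdot\Vert_{L^{4/3}}$, the drift term is bounded by
\[
C M_T\, h \sup_{t\in[r,s]} \Vert \Pi^t(Y)\Vert_{L^4(\widetilde\varphi)}^3 ,
\]
and $\Pi^t(Y) = (\widetilde y_i(s-t)-1)y_i(t) + (y_i(t)-y_i(r))$. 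The first summand has $L^4$-norm $O(h^{1/2})$ by the second step. The second summand has $L^4$-norm $\Vert\Delta(r,t)\Vert_{L^4} + O(h)$, where $\Delta(r,t)$ is the increment of $z_i$ over $[r,t]$.

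Setting $F(h) := \sup\{\Vert z_i(s')-z_i(r')\Vert_{L^4}^4 : r \le r' < s' \le T,\ s'-r'\le h\}$ gives a self-improving inequality
\[
F(h) \le C h^2 + C h\,(h^{1/2} + F(h)^{1/4})^3 .
\]
$F$ is finite a priori, bounded by $(2e^{T/2}+2TM_Te^{T/2})^4$. Using $x^3 y \le \tfrac34 x^4+\tfrac14 y^4$, the term $h F(h)^{3/4}\le \tfrac34 F(h) h^{4/3}\cdot h^{-1/3}$ needs care. Instead, treat it directly: if $F(h)\ge h^2$ then $hF^{3/4}\le F\cdot h F^{-1/4}\le F h^{1/2}$. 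With $h\le 1/16$ this gives $Ch^{1/2}\le 1/2$ after adjusting constants, and $F$ is absorbed into the left-hand side. Then $F(h)\le C_{1,T}h^2$. The smallness restriction $s-r\le 1/16$ is used exactly here. This absorption step, together with the cancellation of first-order terms in the base moment, is where I expect the main difficulty. If the absorption constant does not cooperate, I would fall back on iterating the bound $F(h)=O(h)\Rightarrow F(h)=O(h^{7/4})\Rightarrow\cdots$ until reaching $h^2$.
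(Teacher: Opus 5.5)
Your proof is correct, but it is organized differently from the paper's.

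\textbf{The paper's route.} It first derives from \eqref{Eq4.1} the $L^2$ bound $\Vert y_i(s)-y_i(r)\Vert_{L^2(\varphi)}^2\le 2C(s-r)$. For this bound the crude $O(s-r)$ estimate of the drift already suffices. In the fourth moment it then groups the drift factor together with one copy of $\Pi^t(Y)^*$ into a single $L^\infty$ element. The remaining pair is controlled by $\Vert\widetilde y_i(s-t)y_i(t)-y_i(r)\Vert_{L^2(\widetilde\varphi)}^2=O(s-r)$. So the drift contributes $O((s-r)^2)$ directly, with no self-reference.

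\textbf{Your route.} You put all three remaining factors in $L^4$ by H\"older. This gives an implicit inequality of the form $F\le C(h^2+hF^{3/4})$ for $h\le 1$, which must be closed using the a priori finiteness of $F$.
\begin{itemize}
\item The closing step is easier than you fear. The inequality $x^3y\le\frac34x^4+\frac14y^4$ you wrote down, applied with $x=F^{1/4}$ and $y=Ch$, gives $F\le 4Ch^2+C^4h^4$ at once. Your case split as written (``$Ch^{1/2}\le 1/2$ for $h\le 1/16$ after adjusting constants'') is not literally right, since $C$ depends on $T$ and $M_T$, though the a priori bound would cover the leftover range.
\item For the base term you use the exact value $e^{2r}(e^{2h}-1-2h)$, which needs only Lemma \ref{L4.4}. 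The paper instead appeals to Biane's estimate $\Vert\widetilde y_i(h)-1\Vert_{L^\infty(\widetilde\varphi)}\le 2\sqrt h/(1-2\sqrt h)$ for $h<1/4$.
\item Consequently the restriction $s-r\le 1/16$ plays no role in your argument. In the paper it comes from Biane's estimate, which is $\le 4\sqrt h$ once $h\le 1/16$, not from any absorption.
\end{itemize}

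\textbf{A step you use tacitly.} Taking $T=s$ in \eqref{Eq4.1}, the drift over $[0,r]$ vanishes. For $t\le r$, both $y_i(s)$ and $y_i(r)$ acquire the same skew-adjoint right factor $\sqrt{-1}\,y_i(t)^{-1}\xi_i(t)y_i(t)$, so $D_{\xi(t)}(YY^*)=0$. Alternatively, subtract \eqref{Eq4.1} at $T=r$ from \eqref{Eq4.1} at $T=s$, as the paper implicitly does.

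\textbf{What each approach buys.} The paper gets a direct, non-circular estimate at the price of a preliminary $L^2$ bound. Your route skips the $L^2$ step and Biane's $L^\infty$ estimate, and removes the $1/16$ restriction, at the price of a bootstrap.
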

\begin{proof} 
By the formula \eqref{Eq4.1} in conjunction with Remark \ref{R3.7}, the trace property, the free independence (e.iii) in Subsection 2.5, Lemma \ref{L4.4} and $\widetilde{y}_i(t)\widetilde{y}_i(t)^* = e^t 1$, we have
\begin{align*}
\Vert y_i(s)-y_i(r)\Vert_{L^2(\varphi)}^2 
&= 
\widetilde{\varphi}((\widetilde{y}_i(s-r)-1)y_i(r)y_i(r)^*(\widetilde{y}_i(s-r)-1)^*) \\
&\quad+ 
\int_r^s \Big\{
\widetilde{\varphi}((\sqrt{-1}\widetilde{y}_i(s-t)\xi_i(t)y_i(t))(\widetilde{y}_i(s-t)y_i(t)-y_i(r))^*) \\
&\phantom{aaaaaaaaaa}+ 
\widetilde{\varphi}((\widetilde{y}_i(s-t)y_i(t)-y_i(r))(\sqrt{-1}\widetilde{y}_i(s-t)\xi_i(t)y_i(t))^*)\Big\}\,dt \\
&= 
e^r \widetilde{\varphi}(|\widetilde{y}_i(s-r)-1|^2) 
-2\int_r^s \mathrm{Im}\,\varphi(y_i(r)y_i(t)^*\xi_i(t))\,dt \\
&\leq 
e^r \Vert \widetilde{y}_i(s-r)-1 \Vert_{L^\infty(\widetilde{\varphi})}^2 + 4\sup_{r \leq t \leq s} \Vert\xi_i(t)\Vert_{L^\infty(\varphi)}\, e^r (e^{(s-r)/2}-1) \\
&\leq 
e^r\left(\frac{2\sqrt{s-r}}{1-2\sqrt{s-r}}\right)^2 + 4\sup_{r \leq t \leq s} \Vert\xi_i(t)\Vert_{L^\infty(\varphi)}\, e^r (e^{(s-r)/2}-1)
\end{align*}
as long as $s-r < 1/4$ by the proof of \cite[Lemma 8]{Biane:Fields97}. Consequently, letting 
\[
C := 8\,e^T\left(1 + 2\sup_{0 \leq t \leq T}\Vert\xi_i(t)\Vert_{L^\infty(\varphi)}\,(e^{1/32}-1)\right)
\]
one has 
\[
\Vert y_i(s)-y_i(r)\Vert_{L^2(\varphi)}^2 
\leq 2C (s-r)
\]
for all $0 \leq r \leq s \leq T$ with $s-r \leq 1/16$. 

Similarly, we have 
\begin{align*} 
&\Vert y_i(s)-y_i(r)\Vert_{L^4(\varphi)}^4 \\
&= 
\widetilde{\varphi}((\widetilde{y}_i(s-r)-1)y_i(r)y_i(r)^*(\widetilde{y}_i(s-r)-1)^* (\widetilde{y}_i(s-r)-1)y_i(r)y_i(r)^*(\widetilde{y}_i(s-r)-1)^*) +
2\sqrt{-1} \times\\
&\times \int_r^s \Big\{\widetilde{\varphi}((\widetilde{y}_i(s-t)\xi_i(t)y_i(t))(\widetilde{y}_i(s-t)y_i(t)-y_i(r))^* (\widetilde{y}_i(s-t)y_i(t)-y_i(r))(\widetilde{y}_i(s-t)y_i(t)-y_i(r))^*) \\
&\quad- 
\widetilde{\varphi}((\widetilde{y}_i(s-t)y_i(t)-y_i(r))(\widetilde{y}_i(s-t)y_i(t)-y_i(r))^*(\widetilde{y}_i(s-t)y_i(t)-y_i(r))(\widetilde{y}_i(s-t)\xi_i(t)y_i(t))^*)\Big\}\,dt \\
&= 
e^{2r}\widetilde{\varphi}(|\widetilde{y}_i(s-r)-1|^4) \\
&\quad-
4\int_r^s \mathrm{Im}\widetilde{\varphi}(\widetilde{y}_i(s-t)\xi_i(t)y_i(t)(\widetilde{y}_i(s-t)y_i(t)-y_i(r))^* \\
&\phantom{aaaaaaaaaaaaaaaaaaaaaa}
 (\widetilde{y}_i(s-t)y_i(t)-y_i(r))(\widetilde{y}_i(s-t)y_i(t)-y_i(r))^*)\,dt \\
&\leq 
e^{2r}\Vert \widetilde{y}_i(s-r)-1\Vert_{L^\infty(\widetilde{\varphi})}^4 \\
&\quad+ 
4\int_r^s \Vert (\widetilde{y}_i(s-t)\xi_i(t)y_i(t)(\widetilde{y}_i(s-t)y_i(t)-y_i(r))^*\Vert_{L^\infty(\widetilde{\varphi})}\,\Vert \widetilde{y}_i(s-t)y_i(t)-y_i(r)\Vert_{L^2(\widetilde{\varphi})}^2\,dt \\
&\leq 
16^2\,e^{2T}\,(s-r)^2 + 16\,e^T\,\sup_{0 \leq t \leq T}\Vert\xi_i(t)\Vert_{L^\infty(\varphi)} 
\int_r^s \Big(e^t\,\Vert \widetilde{y}_i(s-t)-1\Vert_{L^2(\widetilde{\varphi})}^2 + \Vert y_i(t)-y_i(r)\Vert_{L^2(\varphi)}^2\Big)\,dt \\
&\leq
16^2\,e^{2T}\,(s-r)^2 + 16\,e^T\,\sup_{0 \leq t \leq T}\Vert\xi_i(t)\Vert_{L^\infty(\varphi)} 
\int_r^s \Big(16e^T (s-t) + 2C (t-r)\Big)\,dt \\
&= 
16e^T\left(16\,e^T + \sup_{0 \leq t \leq T}\Vert\xi_i(t)\Vert_{L^\infty(\varphi)}(8 e^T + C)
\right)\,(s-r)^2
\end{align*}
as long as $0 \leq r \leq s \leq T$ with $s-r \leq 1/16$. Hence we are done. 
\end{proof} 

The next lemma relies heavily on the non-commutative Burkholder--Gundy inequality established by \cite{PisierXu:CMP97}.  

\begin{lemma}\label{L4.9} 
The stochastic integral \eqref{Eq4.2} is well defined even in $L^4(\varphi)$; that is, the net $b_{i,\triangle}(t)$ converges to $b_i(t)$ in $L^4(\varphi)$ as $|\triangle| \to 0$ for every $t \geq 0$. Moreover, for each $T > 0$, there exists a universal constant $C_{2,T}>0$ such that $\Vert b_i(s) - b_i(r)\Vert_{L^4(\varphi)}^4 \leq C_{2,T}(s-r)^2$ holds for any $0 \leq r < s \leq T$.   
\end{lemma}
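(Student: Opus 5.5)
The idea is to treat $b_{i,\triangle}(t)$ as a discrete non-commutative martingale transform and use the Pisier--Xu Burkholder--Gundy inequality in $L^4$. Fix $T>0$ and a division $\triangle: 0=s_0<\dots<s_m=t\le T$. The differences $d_k := (z_i(s_k)-z_i(s_{k-1}))\,y_i(s_{k-1})^{-1}$ are martingale differences with respect to $(L^\infty(\varphi)_{s_k})_k$. This follows from Lemma \ref{L4.1}, because $y_i(s_{k-1})^{-1}$ lies in $L^\infty(\varphi)_{s_{k-1}}$. By \cite{PisierXu:CMP97}, we then have
\[
\Big\Vert \sum_k d_k\Big\Vert_{L^4}\le C\max\Big\{\Big\Vert\Big(\sum_k d_k^*d_k\Big)^{1/2}\Big\Vert_{L^4},\Big\Vert\Big(\sum_k d_kd_k^*\Big)^{1/2}\Big\Vert_{L^4}\Big\}.
\]
Since $\Vert(\sum d_k^*d_k)^{1/2}\Vert_{L^4}^2=\Vert\sum d_k^*d_k\Vert_{L^2}\le\sum_k\Vert d_k\Vert_{L^4}^2$, it is enough to bound $\Vert d_k\Vert_{L^4}$. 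We have $\Vert y_i(s_{k-1})^{-1}\Vert_\infty=e^{-s_{k-1}/2}\le1$, so Lemma \ref{L4.8} gives $\Vert d_k\Vert_{L^4}^2\le C_{1,T}^{1/2}(s_k-s_{k-1})$ whenever the mesh is at most $1/16$. (Lemma \ref{L4.8} is stated for $y$-increments, but it transfers to $z$-increments by adding $\Vert\int_r^s\xi_i y_i\Vert_\infty\le (s-r)\sup\Vert\xi_i\Vert_\infty e^{T/2}$, using assumption (A).) Summing over $k$ yields a uniform bound $\Vert b_{i,\triangle}(s)-b_{i,\triangle}(r)\Vert_{L^4}^4\le C'(s-r)^2$, where we include $r,s$ among the division points. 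When $s-r>1/16$, we subdivide and use the triangle inequality, paying a factor depending only on $T$.

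For convergence, we compare two divisions $\triangle\subset\triangle'$ (a common refinement suffices for a Cauchy net). The difference $b_{i,\triangle'}(t)-b_{i,\triangle}(t)$ is again a martingale transform over $\triangle'$. Its increments are
\[
(z_i(s'_l)-z_i(s'_{l-1}))\big(y_i(s'_{l-1})^{-1}-y_i(s_{k(l)})^{-1}\big),
\]
where $s_{k(l)}$ is the point of $\triangle$ preceding $s'_{l-1}$. Applying Burkholder--Gundy in the same way bounds the $L^4$-norm squared by $C\sum_l\Vert z_i(s'_l)-z_i(s'_{l-1})\Vert_{L^4}^2\,\Vert y_i(s'_{l-1})^{-1}-y_i(s_{k(l)})^{-1}\Vert_\infty^2$.

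The main obstacle is this last factor. Continuity of $\varphi$ only gives $L^2$ (strong) continuity of $u_i$, not operator-norm continuity, so the $L^\infty$ factor need not become small. I would try to keep the factor inside the $L^4$ norm instead. Using $\Vert d\,c\Vert_{L^4}\le\Vert d\Vert_{L^8}\Vert c\Vert_{L^8}$ is one option, but it requires an $L^8$ estimate for the $z$-increments, obtained like Lemma \ref{L4.8}. A better option is to estimate $\Vert\sum_l c_l^*d_l^*d_lc_l\Vert_{L^2}$ directly by expanding via \eqref{Eq4.1}, as in Lemma \ref{L4.5}, and reducing to $L^2$ quantities such as $\Vert y_i(t)^{-1}-y_i(r)^{-1}\Vert_{L^4}$. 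These are controlled because $u_i$ is unitary: $\Vert u-v\Vert_{L^4}^4\le 4\Vert u-v\Vert_{L^2}^2$. With this, the increments tend to zero uniformly by uniform $L^2$-continuity on $[0,T]$. Hence the net is Cauchy in $L^4(\varphi)$, and its limit coincides with the $L^2$-limit $b_i(t)$ because $\Vert\cdot\Vert_{L^2}\le\Vert\cdot\Vert_{L^4}$. The bound $C_{2,T}(s-r)^2$ then passes to the limit.
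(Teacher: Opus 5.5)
Your argument works, but it is organized differently from the paper's.

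\textbf{The paper's route.} The paper proves one estimate for an arbitrary transform $\Xi=\sum_k (z_i(s_k)-z_i(s_{k-1}))a_k$ with adapted bounded $a_k$. After Pisier--Xu, it expands both $\Vert\sum_k \Xi_k\Xi_k^*\Vert_{L^2(\varphi)}^2$ and $\Vert\sum_k \Xi_k^*\Xi_k\Vert_{L^2(\varphi)}^2$. It factors every off-diagonal term with Lemma \ref{L4.5}. It removes the diagonal terms $\alpha^4\sum_k \Vert z_i(s_k)-z_i(s_{k-1})\Vert_{L^4(\varphi)}^4$ by refining the partition and using Lemma \ref{L4.8}. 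The resulting bound sees the integrand only through $\int \Vert a(t)\Vert_{L^2(\varphi)}^2 e^t\,dt$ and $\alpha=\max_k \Vert a_k\Vert_{L^\infty(\varphi)}$. So the same estimate gives both the Cauchy property and the $(s-r)^2$ bound: differences of $y_i^{-1}$ are $L^2$-small by strong continuity, although only bounded in $L^\infty$.

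\textbf{Your moment bound.} You use $\Vert\sum_k d_k^* d_k\Vert_{L^2(\varphi)}\le\sum_k \Vert d_k\Vert_{L^4(\varphi)}^2$ together with Lemma \ref{L4.8}. This is more elementary and avoids Lemma \ref{L4.5} entirely. It also transfers to $b_i(s)-b_i(r)$ without any $L^4$-convergence, because $\Vert\cdot\Vert_{L^4(\varphi)}$ is lower semicontinuous under $L^2$-convergence. The price is the one you identified: it only sees $\Vert c_l\Vert_{L^\infty(\varphi)}$, so convergence needs a second argument.

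\textbf{Your convergence step.} Your option (b) is exactly the paper's computation, and carrying it out does not require returning to \eqref{Eq4.1}. Write $\Delta_l := z_i(s'_l)-z_i(s'_{l-1})$. Extend Lemma \ref{L4.5} by $L^2$-density to adapted elements of $L^\infty(\varphi)_{s'_{l-1}}$ and use the trace property. For $X$ in the past this gives $\varphi(c_l^* \Delta_l^* \Delta_l c_l X)=(e^{s'_l}-e^{s'_{l-1}})\,\varphi(c_l X c_l^*)$. Hence the off-diagonal part is $O(e^{2T}\max_l \Vert c_l\Vert_{L^2(\varphi)}^2)$. The diagonal part is $O(|\triangle'|)$ by Lemma \ref{L4.8} and $\Vert c_l\Vert_{L^\infty(\varphi)}\le 2$. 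The $L^8$ route is therefore unnecessary.

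\textbf{A small correction.} Lemma \ref{L4.8} is in fact stated for $z_i$; only its proof handles $y_i$. Your transfer remark therefore patches the proof rather than the statement.
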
 
\begin{proof} 
We will first establish a rather general estimate. Specifically, for any
\[
\Xi := \sum_{k=1}^m (z_i(s_k)-z_i(s_{k-1})) a_k
\]
with $0 \leq s_0 < s_1 < \cdots < s_m \leq T$ and $a_k \in C_R^*\langle x,u\rangle_{s_{k-1}}$ we will estimate $\Vert \Xi\Vert_{L^2(\varphi)}$ from the above. By \cite[Theorem 2.1]{PisierXu:CMP97} there exists a universal constant $C_0 > 0$ such that 
\[
\Vert \Xi \Vert_{L^4(\varphi)} \leq C_0\,\max\Big\{ \Big\Vert \sum_{k=1}^m \Xi_k \Xi_k^*\Big\Vert_{L^2(\varphi)}^{1/2}, \Big\Vert\sum_{k=1}^m \Xi_k^*  \Xi_k\Big\Vert_{L^2(\varphi)}^{1/2}\Big\}
\] 
with $\Xi_k := (z_i(s_k)-z_i(s_{k-1})) a_k$. For the time being, we set $\alpha := \max\{ \Vert a_k\Vert_{L^\infty(\varphi)}; k=1,\dots,m\}$. We have 
\begin{align*}
&\Big\Vert \sum_{k=1}^m \Xi_k \Xi_k^*\Big\Vert_{L^2(\varphi)}^2 \\
&= 
\sum_{k_1,k_2=1}^m \varphi(\Xi_{k_1}\Xi_{k_1}^*\Xi_{k_2}\Xi_{k_2}^*) \\
&= 
\sum_{1 \leq k_1 < k_2 \leq m} \varphi(\Xi_{k_1}\Xi_{k_1}^*\Xi_{k_2}\Xi_{k_2}^*) + \sum_{1 \leq k_2 < k_1 \leq m} \varphi(\Xi_{k_1}\Xi_{k_1}^*\Xi_{k_2}\Xi_{k_2}^*) + \sum_{k=1}^m \varphi(\Xi_k\Xi_k^*\Xi_k\Xi_k^*) \\
&= 
\sum_{k_2=2}^m \sum_{k_1=1}^{k_2-1} \varphi((z_i(s_{k_2})-z_i(s_{k_2-1}))a_{k_2}a_{k_2}^*(z_i(s_{k_2})-z_i(s_{k_2-1}))^*\Xi_{k_1}\Xi_{k_1}^*) \\
&\quad+\sum_{k_1=2}^m \sum_{k_2=1}^{k_1-1} \varphi((z_i(s_{k_1})-z_i(s_{k_1-1}))a_{k_1}a_{k_1}^*(z_i(s_{k_1})-z_i(s_{k_1-1}))^*\Xi_{k_2}\Xi_{k_2}^*) \\
&\quad+ 
\sum_{k=1}^m \varphi(\Xi_k\Xi_k^*\Xi_k\Xi_k^*) \\
&= 
\sum_{1 \leq k_1 \neq k_2 \leq m}  \Vert a_{k_1}\Vert_{L^2(\varphi)}^2\,\Vert a_{k_2}\Vert_{L^2(\varphi)}^2 \int_{s_{k_1-1}}^{s_{k_1}} e^t\,dt\,\int_{s_{k_2-1}}^{s_{k_2}} e^t\,dt  
+ \sum_{k=1}^m \varphi(\Xi_k\Xi_k^*\Xi_k\Xi_k^*) \\
&\leq 
\sum_{1 \leq k_1 \neq k_2 \leq m}  \Vert a_{k_1}\Vert_{L^2(\varphi)}^2\,\Vert a_{k_2}\Vert_{L^2(\varphi)}^2 \int_{s_{k_1-1}}^{s_{k_1}} e^t\,dt\,\int_{s_{k_2-1}}^{s_{k_2}} e^t\,dt 
+ \alpha^4 \sum_{k=1}^m \Vert z_i(s_k) - z_i(s_{k-1})\Vert_{L^4(\varphi)}^4. 
\end{align*}
By partitioning each interval into smaller subintervals and taking the limit as the mesh size (the maximum length of the subintervals) tends to zero, it follows that 
\[
\Big\Vert \sum_{k=1}^m \Xi_k \Xi_k^*\Big\Vert_{L^2(\varphi)}^{1/2} 
\leq \left(\int_0^T \Big\Vert\sum_{k=1}^m \mathbf{1}_{(s_{k-1},s_k]}(t)a_k\Big\Vert_{L^2(\varphi)}^2\,e^t\,dt\right)^{1/2}.  
\]
holds by Lemma \ref{L4.8}. Similarly, we have
\begin{align*}
&\Big\Vert \sum_{k=1}^m \Xi_k^* \Xi_k\Big\Vert_{L^2(\varphi)}^2 \\
&= 
\sum_{k_2=2}^m \sum_{k_1=1}^{k_2-1} \varphi(a_{k_2}^*(z_i(s_{k_2})-z_i(s_{k_2-1}))^*(z_i(s_{k_2})-z_i(s_{k_2-1}))a_{k_2}\Xi_{k_1}^*\Xi_{k_1}) \\
&\quad+\sum_{k_1=2}^m \sum_{k_2=1}^{k_1-1} \varphi(a_{k_1}^*(z_i(s_{k_1})-z_i(s_{k_1-1}))^*(z_i(s_{k_1})-z_i(s_{k_1-1}))a_{k_1}\Xi_{k_2}^*\Xi_{k_2}) \\
&\quad+ 
\sum_{k=1}^m \varphi(\Xi_k^*\Xi_k\Xi_k^*\Xi_k) \\
&= 
\sum_{k_2=2}^m \sum_{k_1=1}^{k_2-1} \varphi(a_{k_2}\Xi_{k_1}^*\Xi_{k_1}a_{k_2}^*) \int_{s_{k_2-1}}^{s_{k_2}} e^t\,dt 
+\sum_{k_1=1}^m \sum_{k_2=1}^{k_1-1} \varphi(a_{k_1}\Xi_{k_2}^*\Xi_{k_2}a_{k_1}^*) \int_{s_{k_1-1}}^{s_{k_1}} e^t\,dt \\
&\quad+ 
\sum_{k=2}^m \varphi(\Xi_k^*\Xi_k\Xi_k^*\Xi_k) \\
&\leq 
\alpha^2\sum_{k_2=2}^m \sum_{k_1=1}^{k_2-1} \varphi(\Xi_{k_1}\Xi_{k_1}^*) \int_{s_{k_2-1}}^{s_{k_2}} e^t\,dt 
+\alpha^2\sum_{k_1=2}^m \sum_{k_2=1}^{k_1-1}  \varphi(\Xi_{k_2}\Xi_{k_2}^*) \int_{s_{k_1-1}}^{s_{k_1}} e^t\,dt \\
&\quad+ 
\alpha^4 \sum_{k=1}^m \Vert z_i(s_k) - z_i(s_{k-1})\Vert_{L^4(\varphi)}^4 \\
&\leq 
\alpha^2 \sum_{k_2=2}^m \sum_{k_1=1}^{k_2-1} \Vert a_{k_1}\Vert_{L^2(\varphi)}^2 \int_{s_{k_1-1}}^{s_{k_1}} e^t\,dt\, \int_{s_{k_2-1}}^{s_{k_2}} e^t\,dt 
+
\alpha^2\sum_{k_1=2}^m \sum_{k_2=1}^{k_1-1} \Vert a_{k_2}\Vert_{L^2(\varphi)}^2 \int_{s_{k_1-1}}^{s_{k_1}} e^t\,dt\, \int_{s_{k_2-1}}^{s_{k_2}} e^t\,dt \\
&\quad+ \alpha^4 \sum_{k=1}^m \Vert z_i(s_k) - z_i(s_{k-1})\Vert_{L^4(\varphi)}^4.  
\end{align*}
By the same reasoning as the previous estimate, we obtain that 
\[
\Big\Vert \sum_{k=1}^m \Xi_k^* \Xi_k\Big\Vert_{L^2(\varphi)}^{1/2} 
\leq
\left(2\alpha^2(e^{s_m} - e^{s_0}) \int_0^T \Big\Vert\sum_{k=1}^m \mathbf{1}_{(t_{k-1},t_k]}(t)a_k\Big\Vert_{L^2(\varphi)}^2\,e^t\,dt\right)^{1/4}. 
\]
The estimates established so far show that $b_{i,\triangle}(t)$ converges in $L^4(\varphi)$ as $|\triangle|\to0$. Moreover, this limit is shown to coincides with $b_i(t)$. Furthermore, 
\begin{align*}
\Vert b_i(s)-b_i(r)\Vert_{L^4(\varphi)} 
&\leq 
C_0 \max\big\{ \Vert b_i(s)-b_i(r)\Vert_{L^2(\varphi)}, \big(2e^s(e^s - e^r)\,\Vert b_i(s)-b_i(r)\Vert_{L^2(\varphi)}^2\big)^{1/4}\big\} \\
&\leq 
C_0 \max\left\{1,\left(2 e^{2T} \frac{e^T-1}{T}\right)^{1/4}\right\} (s-r)^{1/2}. 
\end{align*}
Hence we are done. 
\end{proof}   

The next lemma is essentially due to Dabrowski (see the proof of \cite[Theorem 22]{Dabrowski:preprint10}). However, since his result is formulated in a more general setting than required for our purpose, we will reconstruct the proof within a restricted framework that specifically fits our problem. 

\begin{lemma} \label{L4.10} 
For every $t \geq 0$, the probability distribution measure of $b_i(t)$ is exactly the centered semicircular distribution of radius $2\sqrt{t}$, and hence $b_i(t)$ belongs to $L^\infty(\varphi)$. 
\end{lemma}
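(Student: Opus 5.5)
We will show that the moments of $b_i(t)$ coincide with those of the semicircular law of variance $t$. Set $m_p(t) := \varphi(b_i(t)^p)$; this is real because $b_i(t)$ is self-adjoint by Lemma \ref{L4.6}. The target is the recursion
\[
m_p(t) = \frac{p}{2}\sum_{k=0}^{p-2}\int_0^t m_k(s)\,m_{p-2-k}(s)\,ds,\qquad m_0=1,\ m_1=0,
\]
which the moments of the semicircular law of variance $t$ satisfy (the free It\^{o} formula for free Brownian motion). The recursion determines all $m_p$ uniquely.

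Since $b_i(t)$ is a priori only in $L^4(\varphi)$ (Lemma \ref{L4.9}), we will first work with bounded functional calculus. Take $f(b_i(t))$ for $f$ a polynomial truncated by a smooth cutoff, or use resolvents $g_z(t) := \varphi((z-b_i(t))^{-1})$ for $z\in\mathbb{C}\setminus\mathbb{R}$; these are in $L^\infty(\varphi)$, so all quantities make sense. The aim is the complex Burgers equation $\partial_t g_z = -g_z\,\partial_z g_z$ with $g_z(0)=1/z$. Its unique solution is the Cauchy transform of the semicircle law of variance $t$, so the distribution is identified, and boundedness of the support then gives $b_i(t)\in L^\infty(\varphi)$.

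To derive the evolution, fix a partition $0=s_0<\dots<s_m=t$ and telescope
\[
g_z(t)-g_z(0)=\sum_k\bigl[\varphi(R(s_k))-\varphi(R(s_{k-1}))\bigr],\qquad R(s):=(z-b_i(s))^{-1}.
\]
Write $\Delta_k := b_i(s_k)-b_i(s_{k-1})$ and expand with the resolvent identity to second order:
\[
R(s_k)=R(s_{k-1})+R(s_{k-1})\Delta_k R(s_{k-1})+R(s_{k-1})\Delta_k R(s_{k-1})\Delta_k R(s_{k-1})+\text{remainder}.
\]
The first-order term vanishes in trace because $b_i$ is a martingale and $R(s_{k-1})\in L^\infty(\varphi)_{s_{k-1}}$. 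For the second-order term we use Lemma \ref{L4.7}, applied after approximating the bounded elements $R(s_{k-1})$ by elements of $\mathbb{C}\langle x,u\rangle_{s_{k-1}}$ in $L^2$ (norm-bounded). This gives
\[
\varphi(R\Delta_k R\Delta_k R)=(s_k-s_{k-1})\,\varphi(R)\,\varphi(R^2)+o(s_k-s_{k-1}),
\]
with $R=R(s_{k-1})$, that is, $-\varphi(R)\partial_z\varphi(R)$ times the increment. Summing and letting the mesh tend to zero, using continuity of $s\mapsto b_i(s)$ in $L^2$ from \eqref{Eq4.3}, yields the integrated Burgers equation.

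The main obstacle is controlling the third-order remainder, which involves terms like $R\Delta_kR\Delta_kR\Delta_k R(s_k)$. Here we use the $L^4$ bound of Lemma \ref{L4.9}, $\|\Delta_k\|_{L^4}^4\le C_{2,t}(s_k-s_{k-1})^2$, together with $\|R\|_\infty\le|\mathrm{Im} z|^{-1}$. By H\"{o}lder, the trace of the remainder is bounded by $|\mathrm{Im} z|^{-4}\|\Delta_k\|_{L^4}^{2}\,\|\Delta_k R\Delta_k\|_{L^2}$. To get a summable $o(\cdot)$ from this, we will instead stop the expansion at second order with exact remainder $R\Delta R\Delta R(s_k)$. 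We then compare this exact remainder with $R\Delta R\Delta R$, which differs by $R\Delta R\Delta(R(s_k)-R)$ and is of size $\|\Delta\|_{L^4}^2\|R(s_k)-R\|_{L^2}$, hence $O(s_k-s_{k-1})\cdot o(1)$ uniformly by $L^2$-continuity. This is the delicate step. Uniformity of the $o(s-r)$ in Lemma \ref{L4.7} over $r\in[0,t]$ also needs checking; it follows from its proof via uniform continuity of $u_i$ on compacts. Finally, we pass from the resolvent equation to the identification of the law by uniqueness of solutions to the Burgers equation analytic in the upper half-plane.
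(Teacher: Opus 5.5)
Your proposal is correct and follows the paper's proof essentially step for step. Both arguments telescope the resolvents and expand to second order, kill the first-order term by the martingale property, and evaluate the second-order term with Lemma \ref{L4.7}. Both bound the remainder by $|\mathrm{Im}\,z|^{-4}\Vert\Delta_k\Vert_{L^4(\varphi)}^2\Vert\Delta_k\Vert_{L^2(\varphi)}$ using Lemma \ref{L4.9} and \eqref{Eq4.3}, which yields the inviscid Burgers equation identifying the semicircular Cauchy transform. Your remarks on approximating the resolvents when applying Lemma \ref{L4.7}, and on the uniformity of its $o(s-r)$ over $r\in[0,t]$, make explicit details the paper leaves implicit.
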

\begin{proof} 
By Lemma \ref{L4.6} the left-multiplication of each $b_i(t)$ on $L^2(\varphi)$ defines a self-adjoint operator affiliated with the tracial $W^*$-algebra obtained by the left-multiplications of $L^\infty(\varphi)$; see Appendix \ref{noncomm_L^2}. Thus, for each $z \in \mathbb{C}$ with $\mathrm{Im}\,z > 0$, $(z1-b_i(t))^{-1} \in L^\infty(\varphi)$ is well defined and $\Vert (z1-b_i(t))^{-1}\Vert_{L^\infty(\varphi)} \leq (\mathrm{Im}\,z)^{-1}$. 

Since
\begin{align*}
\Vert (z1-b_i(s))^{-1} - (z1-b_i(r))^{-1}\Vert_{L^2(\varphi)} 
&= 
\Vert (z1-b_i(s))^{-1}(b_i(s)-b_i(r))(z1-b_i(r))^{-1}\Vert_{L^2(\varphi)} \\
&\leq (\mathrm{Im}\,z)^{-2} \Vert b_i(s)-b_i(r)\Vert_{L^2(\varphi)}
\end{align*}
by the resolvent formula, the map $t \mapsto (z1-b_i(t))^{-1}$ is continuous. 

Let $\triangle: s_0 = 0 < s_1 < \cdots < s_m = t$ be arbitrarily fixed. Observe that 
\begin{align*}
&(z1-b_i(t))^{-1} \\
&= 
(z1-b_i(s_0))^{-1} - (z1-b_i(s_0))^{-1} + (z1-b_i(s_1))^{-1} - (z1-b_i(s_1))^{-1} + \cdots + (z1-b_i(s_m))^{-1} \\
&= 
(z1-b_i(s_0))^{-1} + \sum_{k=0}^{m-1} ((z1-b_i(s_{k+1}))^{-1} - (z1-b_i(s_k))^{-1}) \\
&=  
(z1-b_i(s_0))^{-1} + \sum_{k=0}^{m-1} (z1-b_i(s_{k+1}))^{-1}(b_i(s_{k+1})-b_i(s_k))(z1-b_i(s_k))^{-1}  
\end{align*}
by the resolvent formula. We proceed this pattern of computation using the resolvent formula: 
\begin{align*}
&\sum_{k=0}^{m-1} (z1-b_i(s_{k+1}))^{-1}(b_i(s_{k+1})-b_i(s_k))(z1-b_i(s_k))^{-1} \\ 
&= 
\sum_{k=0}^{m-1} (z1-b_i(s_k))^{-1}(b_i(s_{k+1})-b_i(s_k))(z1-b_i(s_k))^{-1} \\
&\qquad+ 
\sum_{k=0}^{m-1} ((z1-b_i(s_{k+1}))^{-1}-(z1-b_i(s_k))^{-1}(b_i(s_{k+1})-b_i(s_k))(z1-b_i(s_k))^{-1} \\
&= 
\sum_{k=0}^{m-1} (z1-b_i(s_k))^{-1}(b_i(s_{k+1})-b_i(s_k))(z1-b_i(s_k))^{-1} \\
&\qquad+ 
\sum_{k=0}^{m-1} (z1-b_i(s_{k+1}))^{-1}(b_i(s_{k+1})-b_i(s_k))(z1-b_i(s_k))^{-1}(b_i(s_{k+1})-b_i(s_k))(z1-b_i(s_k))^{-1} \\
&= 
\sum_{k=0}^{m-1} (z1-b_i(s_k))^{-1}(b_i(s_{k+1})-b_i(s_k))(z1-b_i(s_k))^{-1} \\
&\qquad+ 
\sum_{k=0}^{m-1} (z1-b_i(s_k))^{-1}(b_i(s_{k+1})-b_i(s_k))(z1-b_i(s_k))^{-1}(b_i(s_{k+1})-b_i(s_k))(z1-b_i(s_k))^{-1} \\
&\qquad+ R_\triangle 
\end{align*}
with 
\begin{align*}
R_\triangle 
&:= 
\sum_{k=0}^{m-1} ((z1-b_i(s_{k+1}))^{-1}-(z1-b_i(s_k))^{-1}) \\
&\phantom{aaaaaaaaaaaaaaa} 
(b_i(s_{k+1})-b_i(s_k))(z1-b_i(s_k))^{-1}(b_i(s_{k+1})-b_i(s_k))(z1-b_i(s_k))^{-1}. 
\end{align*} 

By the martingale property for $b_i(t)$ due to its construction based on Lemma \ref{L4.1} and Lemma \ref{L4.7}, we have 
\begin{align*}
\varphi((z1-b_i(t))^{-1}) 
=
z^{-1} &+ \sum_{k=0}^{m-1} \big(\varphi((z1-b_i(s_k))^{-1})\varphi((z1-b_i(s_k))^{-2})\,(s_{k+1}-s_k) + o(s_{k+1}-s_k)\big) \\
&+ (R_\triangle|1)_{L^2(\varphi)}. 
\end{align*}
Note that we have the following inner product estimate: 
\begin{align*}
&\big|((b_i(s_{k+1})-b_i(s_k))(z1-b_i(s_k))^{-1}(b_i(s_{k+1})-b_i(s_k))(z1-b_i(s_k))^{-1} \\
&\phantom{aaaaaaaaaaaaaaaaaaaaaaaaaaaaaaaaaaaaaaa}
|\,((z1-b_i(s_{k+1}))^{-1}-(z1-b_i(s_k))^{-1})^*)_{L^2(\varphi)}\big| \\
&= 
\big|((b_i(s_{k+1})-b_i(s_k))(z1-b_i(s_k))^{-1}(b_i(s_{k+1})-b_i(s_k))(z1-b_i(s_k))^{-1} \\
&\phantom{aaaaaaaaaaaaaaaaaaaaaa}
|\,((z1-b_i(s_{k+1}))^{-1}(b_i(s_{k+1} - b_i(s_k))(z1-b_i(s_k))^{-1})^*)_{L^2(\varphi)}\big| \\
&\leq 
\Vert ((b_i(s_{k+1})-b_i(s_k))(z1-b_i(s_k))^{-1}(b_i(s_{k+1})-b_i(s_k))(z1-b_i(s_k))^{-1})\Vert_{L^2(\varphi)} \\
&\phantom{aaaaaaaaaaaaaaaaaaaaaa} \times 
\Vert (z1-b_i(s_{k+1}))^{-1}(b_i(s_{k+1} - b_i(s_k))(z1-b_i(s_k))^{-1}\Vert_{L^2(\varphi)} \\
&\leq 
(\mathrm{Im}\,z)^{-4}\,\Vert b_i(s_{k+1})-  b_i(s_k)\Vert_{L^4(\varphi)}^2 \Vert b_i(s_{k+1})-  b_i(s_k)\Vert_{L^2(\varphi)},  
\end{align*}
Combined with the identity \ref{Eq4.3} and Lemma \ref{L4.9}, this leads to the following estimate: 
\begin{align*} 
|\varphi(R_\triangle)| 
&\leq 
(\mathrm{Im}\,z)^{-4}\sum_{k=0}^{m-1} \Vert b_i(s_{k+1})-  b_i(s_k)\Vert_{L^2(\varphi)} \Vert b_i(s_{k+1})-  b_i(s_k)\Vert_{L^4(\varphi)}^2 \\
&\leq 
(\mathrm{Im}\,z)^{-4}\sum_{k=0}^{m-1} (s_{k-1}-s_k)^{1/2} C_{t,2}^{1/2} (s_{k+1}-s_k) \\
&\leq 
(\mathrm{Im}\,z)^{-4} |\triangle|^{1/2}\,C_{t,2}^{1/2} t \to 0 
\end{align*}
as $|\triangle| \to 0$. Consequently, we obtain that 
\[
\varphi((z1-b_i(t))^{-1}) = z^{-1} + \int_0^t \varphi((z1-b_i(s))^{-1})\varphi((z1-b_i(s))^{-2})\,ds. 
\]
This means that $G(t,z) := \varphi((z1-b_i(t))^{-1})$ satisfies the quasi-linear partial differential equation (the same form as inviscid Burger's equation)
\[
\partial_t G(t,z) + G(t,z) \partial_z G(t,z) = 0, \qquad G(0,z) = z^{-1}. 
\]
It is well known that one can solve this by the method of characteristics (see e.g.\ \cite[subsection 3.2.2.b]{Evans:Book}). The characteristic curve is given by $z(t) = z + t/z$, that is, $G(t,z(t)) = G(0,z) = 1/z$ holds. The standard knowledge on the Joukowski transform shows that $z \mapsto z+ t/z$ gives a conformal equivalence from $\mathbb{H}^+\setminus\overline{B(0,\sqrt{t})}$ onto $\mathbb{H}^+$, where $\mathbb{H}^+$ denotes the upper half plane $\mathrm{Im}\,z > 0$ and $B(0,\sqrt{t})$ the open ball centered at $0$ of radius $\sqrt{t}$. Using its inverse map $z \mapsto (z+\sqrt{z^2-4t})/2$, we obtain         
\[
G(t,z) = \left(\frac{z+\sqrt{z^2 - 4t}}{2}\right)^{-1} = 
\frac{z-\sqrt{z^2-4t}}{2t}, \qquad (t,z) \in [0,+\infty)\times\mathbb{H}^+.  
\]
from which the desired assertion immediately follows thanks to the Stieltjes inversion formula (see e.g.\ \cite[section 3.1]{MingoSpeicher:Book}).
\end{proof}

We are now in position to apply Biane--Capitaine--Guionnet's L\'{e}vy-type characterization of free Brownian motion to our stochastic integral $b(t) = (b_1(t),\dots,b_n(t))$. Lemmas \ref{L4.6} and \ref{L4.10} guarantee that the $b(t)$ is an $n$-dimensional self-adjoint $L^\infty(\varphi)$-martingale, that is, item 1 of \cite[Theorem 6.2]{BianeCapitaineGuionnet:InventMath03} holds. Lemma \ref{L4.7} is nothing less than item 3 of \cite[Theorem 6.2]{BianeCapitaineGuionnet:InventMath03}. Moreover, Lemma \ref{L4.9} guarantees item 2 of \cite[Theorem 6.2]{BianeCapitaineGuionnet:InventMath03}. Consequently, we arrive at the next assertion. 

\begin{proposition}\label{P4.11}
The stochastic integral \eqref{Eq4.2} $b(t) = (b_1(t),\dots,b_n(t))$ defines an $n$-dimensional free Brownian motion adapted to the filtration $(L^\infty(\varphi)_t)_{t \geq 0}$.  
\end{proposition}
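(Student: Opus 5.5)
The plan is to verify the three hypotheses of the L\'{e}vy-type characterization \cite[Theorem 6.2]{BianeCapitaineGuionnet:InventMath03}, in the refined form of \cite[Theorem 22]{Dabrowski:preprint10}. We apply it to the tuple $b(t)=(b_1(t),\dots,b_n(t))$ with the filtration $(L^\infty(\varphi)_t)_{t\ge0}$ inside the tracial $W^*$-probability space $(L^\infty(\varphi),\varphi)$.

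\emph{Item 1 (self-adjoint bounded martingale).} Lemma \ref{L4.6} gives $b_i(t)=b_i(t)^*$, and Lemma \ref{L4.10} gives $b_i(t)\in L^\infty(\varphi)$. Adaptedness holds because each approximant $b_{i,\triangle}(t)$ lies in $L^2(\varphi)_t$, since $z_i(s_k)\in L^\infty(\varphi)_{s_k}$ and $y_i(s_{k-1})^{-1}\in C^*_R\langle x,u\rangle_{s_{k-1}}$. For the martingale property, I check it first on approximants. For $r\le s_{k-1}$ we have $E_r^\varphi\big((z_i(s_k)-z_i(s_{k-1}))y_i(s_{k-1})^{-1}\big) = E_r^\varphi\big(E_{s_{k-1}}^\varphi(z_i(s_k)-z_i(s_{k-1}))\,y_i(s_{k-1})^{-1}\big) = 0$. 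This uses Lemma \ref{L4.1} and the bimodule property of the conditional expectation over $L^\infty(\varphi)_{s_{k-1}}$. Refining partitions to contain $r$ and passing to the $L^2$-limit then gives $E_r^\varphi(b_i(s))=b_i(r)$.

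\emph{Item 2 (moment/continuity control).} Lemma \ref{L4.9} supplies $\Vert b_i(s)-b_i(r)\Vert_{L^4(\varphi)}^4\le C_{2,T}(s-r)^2$ on $[0,T]$, which is the required fourth-moment estimate. Since $b_i(t)$ is bounded and its distribution is semicircular with variance $t$ (Lemma \ref{L4.10}), we also get $\Vert b_i(t)\Vert_{L^\infty(\varphi)}=2\sqrt t$. So whichever uniform bound the theorem demands holds locally in $t$.

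\emph{Item 3 (quadratic covariation).} This is exactly Lemma \ref{L4.7}: for $a,b\in\mathbb{C}\langle x,u\rangle_r$,
\[
\varphi((b_i(s)-b_i(r))a(b_j(s)-b_j(r))b)=\delta_{i,j}(s-r)\varphi(a)\varphi(b)+o(s-r).
\]
The characterization requires this for $a,b$ in the whole filtration algebra $L^\infty(\varphi)_r$, not just polynomials. I would extend it by a Kaplansky density argument: approximate $a,b$ in $L^2$ by polynomials bounded in $L^\infty$-norm. The error is controlled by Hölder with $L^4$-norms of the increments, which is $O(s-r)$ by Lemma \ref{L4.9}, times $\Vert a-a'\Vert_{L^2}$. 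This is small uniformly, so the $o(s-r)$ statement survives for fixed $a,b$.

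With all three items verified, the theorem yields that $b$ is an $n$-dimensional free Brownian motion with respect to the filtration. That means its increments are free from $L^\infty(\varphi)_r$, and the components are mutually free semicircular processes. The step I expect to need care is precisely this matching of hypotheses: uniformity of the $o(s-r)$ term and the density extension from $\mathbb{C}\langle x,u\rangle_r$ to $L^\infty(\varphi)_r$. I would handle it as above, using the $L^4$-bound rather than $L^2$-continuity alone.
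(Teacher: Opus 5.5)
Your proposal is correct and takes essentially the paper's approach: the paper proves Proposition~\ref{P4.11} by checking items 1--3 of \cite[Theorem 6.2]{BianeCapitaineGuionnet:InventMath03}, using Lemmas \ref{L4.6} and \ref{L4.10} for the self-adjoint bounded martingale, Lemma \ref{L4.9} for the fourth-moment bound, and Lemma \ref{L4.7} for the covariation. Your two additions are sound details that the paper leaves implicit: the explicit martingale check on the approximants $b_{i,\triangle}$, and the Kaplansky-density extension of Lemma \ref{L4.7} from $\mathbb{C}\langle x,u\rangle_r$ to $L^\infty(\varphi)_r$ via the $L^4$--$L^2$--$L^4$ H\"older bound.
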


Here is the main theorem of this section.  

\begin{theorem}\label{T4.12} 
The non-commutative stochastic process $y(t) = (y_1(t),\dots,y_n(t))$ satisfies the free SDE 
\[
dy_i(t) = \sqrt{-1}(db_i(t)+\xi_i(t)\,dt)y_i(t), \qquad i=1,\dots,n
\]
with the $n$-dimensional free Brownian motion $b(t) = (b_1(t),\dots,b_n(t))$ in Proposition \ref{P4.11}. 
\end{theorem}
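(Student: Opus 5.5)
The free SDE is to be understood in integral form,
\[
y_i(t) = 1 + \sqrt{-1}\int_0^t db_i(s)\,y_i(s) + \sqrt{-1}\int_0^t \xi_i(s)y_i(s)\,ds ,
\]
where the stochastic integral is the $L^2$-limit of left-point Riemann sums $\sum_k (b_i(s_k)-b_i(s_{k-1}))\,y_i(s_{k-1})$. By the definition of $z_i$ in Lemma \ref{L4.1}, $y_i(t)-1-\sqrt{-1}\int_0^t\xi_i(s)y_i(s)\,ds = z_i(t)-z_i(0)$, since $z_i(0)=y_i(0)=1$. So the whole statement reduces to one identity,
\[
z_i(t)-z_i(0) = \sqrt{-1}\int_0^t db_i(s)\,y_i(s),
\]
which is the formal inversion of \eqref{Eq4.2}, i.e.\ $dz_i = \sqrt{-1}\,db_i\, y_i$. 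The plan is to prove this inversion directly through Riemann sums in $L^2(\varphi)$.

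Fix a partition $\triangle: 0=s_0<\dots<s_m=t$. The quantity to bound is
\[
\sum_k (z_i(s_k)-z_i(s_{k-1})) - \sqrt{-1}\sum_k (b_i(s_k)-b_i(s_{k-1}))\,y_i(s_{k-1}) .
\]
Set
\[
\Delta_k := (b_i(s_k)-b_i(s_{k-1})) - \bigl(-\sqrt{-1}(z_i(s_k)-z_i(s_{k-1}))\,y_i(s_{k-1})^{-1}\bigr).
\]
Since $y_i(s_{k-1})^{-1}y_i(s_{k-1})=1$, the difference above equals $-\sqrt{-1}\sum_k \Delta_k\, y_i(s_{k-1})$.

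The first step is the single-increment estimate already computed in the proof of Lemma \ref{L4.7}:
\[
\Vert\Delta_k\Vert_{L^2(\varphi)}^2=\int_{s_{k-1}}^{s_k}\Vert y_i(s)^{-1}-y_i(s_{k-1})^{-1}\Vert_{L^2(\varphi)}^2\,e^s\,ds .
\]
The second step is orthogonality. Each $\Delta_k$ is a martingale increment on $(s_{k-1},s_k]$: both $b_i$ and the elementary integral are martingales, by Lemma \ref{L4.1} and the construction of the stochastic integral. Right multiplication by $y_i(s_{k-1})\in L^\infty(\varphi)_{s_{k-1}}$ preserves this property. Hence the terms $\Delta_k y_i(s_{k-1})$ are mutually orthogonal in $L^2(\varphi)$, because $E^\varphi_{s_{k-1}}(\Delta_k\, c)=0$ for $c\in L^\infty(\varphi)_{s_{k-1}}$ and for $k<l$ the term $\Delta_k y_i(s_{k-1})$ lies in $L^2(\varphi)_{s_{l-1}}$. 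Using $\Vert y_i(s_{k-1})\Vert_{L^\infty}\le e^{t/2}$, this gives
\[
\Bigl\Vert\sum_k\Delta_k y_i(s_{k-1})\Bigr\Vert_{L^2(\varphi)}^2\le e^{t}\,e^t\sum_k\int_{s_{k-1}}^{s_k}\Vert y_i(s)^{-1}-y_i(s_{k-1})^{-1}\Vert_{L^2(\varphi)}^2\,ds .
\]
The right-hand side tends to $0$ as $|\triangle|\to0$, because $s\mapsto y_i(s)^{-1}=e^{-s/2}u_i(s)^*$ is uniformly $L^2$-continuous on $[0,t]$ by continuity of $\varphi$.

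The last step is to identify the limit of the Riemann sums $\sum_k(b_i(s_k)-b_i(s_{k-1}))\,y_i(s_{k-1})$ as the stochastic integral $\int_0^t db_i\,y_i$. This is standard, since $b_i$ is a free Brownian motion (Proposition \ref{P4.11}) and $y_i$ is bounded, adapted and $L^2$-continuous. The identity then holds exactly, which is the free SDE. If one also wants the left-multiplication form $\sqrt{-1}\,db_i(t)\,y_i(t)$ to be unambiguous, Lemma \ref{L4.6} ensures $b_i$ is self-adjoint, so the interpretation is consistent.

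The main obstacle is justifying the orthogonality of the error terms $\Delta_k y_i(s_{k-1})$. The increment of the $L^2$-limit stochastic integral over $(s_{k-1},s_k]$ must be known to be orthogonal to $L^2(\varphi)_{s_{k-1}}$ even after right multiplication by adapted bounded elements. I would settle this first at the level of the approximating nets $b_{i,\triangle'}$ with $\triangle'$ refining $\triangle$, where it follows directly from Lemma \ref{L4.1}, and then pass to the limit using the $L^2$-convergence from \eqref{Eq4.3}.
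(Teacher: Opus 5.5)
Your proposal is correct and follows essentially the same route as the paper: split the error over the partition into martingale increments, use their mutual orthogonality, and bound each term via the isometry with $\kappa_{z_i}(dt)=e^t\,dt$ together with the $L^2$-continuity of $u_i$. The differences are cosmetic. The paper computes $\Vert (z_i(s_k)-z_i(s_{k-1}))-\sqrt{-1}(b_i(s_k)-b_i(s_{k-1}))y_i(s_{k-1})\Vert_{L^2(\varphi)}^2=\int_{s_{k-1}}^{s_k}\Vert 1-y_i(s)^{-1}y_i(s_{k-1})\Vert_{L^2(\varphi)}^2e^s\,ds$ directly, whereas you factor out $y_i(s_{k-1})$ and reuse the estimate from the proof of Lemma \ref{L4.7}, at the cost of a harmless extra factor $e^t$; you also correctly keep the initial value $z_i(0)=y_i(0)=1$, which the paper's display drops.
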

\begin{proof}
For the time being, we fix a division $\triangle : 0 = s_0 < s_1 < \cdots < s_m = t$ for a given $t > 0$. By the martingale property for  $z_i(t)$ (Lemma \ref{L4.1}) as well as for $b_i(t)$ due to its construction, we have 
\begin{align*} 
&\left\Vert z_i(t) - \sqrt{-1}\sum_{k=1}^m (b_i(s_k) - b_i(s_{k-1}))y_i(s_{k-1})\right\Vert_{L^2(\varphi)}^2 \\
&\quad= 
\left\Vert \sum_{k=1}^m \big((z_i(s_k)-z_i(s_{k-1}) - \sqrt{-1}(b_i(s_k)-b_i(s_{k-1}))y_i(s_{k-1})\big)\right\Vert_{L^2(\varphi)}^2 \\
&\quad= 
\sum_{k=1}^m \Vert (z_i(s_k)-z_i(s_{k-1}) - \sqrt{-1}(b_i(s_k)-b_i(s_{k-1}))y_i(s_{k-1})\Vert_{L^2(\varphi)}^2 \\
&\quad= 
\sum_{k=1}^m \int_{s_{k-1}}^{s_k} \Vert 1 - y_i(s)^{-1}y_i(s_{k-1})\Vert_{L^2(\varphi)}^2\,e^s\,ds \\
&\quad\leq 
\sum_{k=1}^m \int_{s_{k-1}}^{s_k} \Vert y_i(s) - y_i(s_{k-1})\Vert_{L^2(\varphi)}^2\,e^s\,ds \\
&\quad\leq 
\max_{k=1,\dots,m} \sup_{s_{k-1}\leq s \leq s_k} \Vert y_i(s) - y_i(s_{k-1})\Vert_{L^2(\varphi)}^2\,(e^t-1).  
\end{align*}
By taking the limit of this inequality as $|\triangle| \to 0$, we have 
\[
\left\Vert z_i(t) - \sqrt{-1}\int_0^t db_i(s)\,y_i(s)\right\Vert_{L^2(\varphi)}^2 = 0.
\]
Hence we obtain that 
\[
y_i(t) - \sqrt{-1}\int_0^t \xi_i(s)y_i(s)\,ds = \sqrt{-1}\int_0^t db_i(s)\,y_i(s), 
\]
which shows the desired fact. 
\end{proof}

\begin{corollary}\label{C4.13} 
The unitary stochastic process $u(t) = (u_1(t),\dots,u_n(t))$ satisfies the free SDE \eqref{freeSDE}
\[
du_i(t) = \sqrt{-1}(db_i(t)+\xi_i(t)\,dt)u_i(t) - \frac{1}{2}u_i(t)\,dt, \qquad i=1,\dots,n
\]
with the $n$-dimensional free Brownian motion $b(t) = (b_1(t),\dots,b_n(t))$ in Proposition \ref{P4.11}. 
\end{corollary}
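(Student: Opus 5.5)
The plan is to derive the corollary from Theorem \ref{T4.12} via the relation $u_i(t) = e^{-t/2}y_i(t)$, using an integration-by-parts (product) rule in which one factor is a deterministic $C^1$ scalar function. Since $e^{-t/2}$ is a scalar of bounded variation, no quadratic covariation term can arise, so only a first-order correction $-\tfrac12 u_i(t)\,dt$ is expected.

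\emph{Step 1: the integral form of Theorem \ref{T4.12}.} Its proof gives
\[
y_i(t) = 1 + \sqrt{-1}\int_0^t \xi_i(s)y_i(s)\,ds + \sqrt{-1}\int_0^t db_i(s)\,y_i(s),
\]
using $z_i(0)=y_i(0)=1$. The stochastic integral here is the $L^2(\varphi)$-limit of the Riemann sums $\sum_k (b_i(s_k)-b_i(s_{k-1}))y_i(s_{k-1})$.

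\emph{Step 2: the integral form of the corollary.} The claimed identity reads
\[
u_i(t) = 1 + \sqrt{-1}\int_0^t db_i(s)\,u_i(s) + \int_0^t\Big(\sqrt{-1}\,\xi_i(s)-\frac12\Big)u_i(s)\,ds .
\]
To verify it, fix a partition $\triangle$ of $[0,t]$ and write the telescoping sum
\[
u_i(t)-1=\sum_k\Big(e^{-s_k/2}\big(y_i(s_k)-y_i(s_{k-1})\big)+\big(e^{-s_k/2}-e^{-s_{k-1}/2}\big)y_i(s_{k-1})\Big).
\]
\begin{itemize}
\item[(a)] The second sum is a Riemann sum for $-\tfrac12\int_0^t e^{-s/2}y_i(s)\,ds = -\tfrac12\int_0^t u_i(s)\,ds$. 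It converges in $L^2(\varphi)$ because $s\mapsto y_i(s)$ is $L^2$-continuous, which follows from the continuity of $\varphi$.
\item[(b)] In the first sum, replace $e^{-s_k/2}$ by $e^{-s_{k-1}/2}$. The error is bounded by $\max_k|e^{-s_k/2}-e^{-s_{k-1}/2}|\sum_k\Vert y_i(s_k)-y_i(s_{k-1})\Vert_{L^2}$. This is the step I expect to need the most care. I would handle it via the bound $\Vert y_i(s)-y_i(r)\Vert_{L^2}^2\le 2C(s-r)$ from the proof of Lemma \ref{L4.8}, which gives an error of order $O(|\triangle|)\cdot O(|\triangle|^{-1/2})\to0$. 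A cleaner alternative is to substitute Step 1 on each subinterval, so that the increments of $y_i$ split into a drift part of size $O(s_k-s_{k-1})$ and a martingale part whose orthogonal increments sum in $L^2$ to something of order $\sqrt t$.
\item[(c)] Substituting Step 1 into $e^{-s_{k-1}/2}(y_i(s_k)-y_i(s_{k-1}))$ yields two pieces. The drift piece tends to $\sqrt{-1}\int_0^t\xi_i(s)u_i(s)\,ds$, using local boundedness (A) of $\xi$. The martingale piece is $e^{-s_{k-1}/2}\sqrt{-1}\int_{s_{k-1}}^{s_k}db_i(s)\,y_i(s)$, and its sum differs from $\sqrt{-1}\int_0^t db_i(s)\,u_i(s)$ by $\sqrt{-1}\sum_k\int_{s_{k-1}}^{s_k}db_i(s)\,(e^{-s_{k-1}/2}-e^{-s/2})y_i(s)$. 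By the free It\^o isometry for $b_i$ (whose $\kappa$-measure is $ds$, by \eqref{Eq4.3}), the $L^2$-norm squared of this difference is at most $\int_0^t O(|\triangle|^2)e^{s}\,ds\to0$.
\end{itemize}

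Collecting the three limits gives the integral identity in Step 2, which is the asserted free SDE \eqref{freeSDE}.
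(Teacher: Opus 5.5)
Your argument is correct and is essentially the paper's proof, which applies the free It\^o formula of Biane--Speicher to $u_i(t)=e^{-t/2}y_i(t)$ together with Theorem \ref{T4.12}; since $e^{-t/2}$ is a deterministic scalar of bounded variation, your Riemann-sum computation is a self-contained proof of exactly the product rule needed there, and you rightly keep the initial value $z_i(0)=y_i(0)=1$ in the integral form. One small simplification in step (b) is to pair the factors the other way, $\sum_k|e^{-s_k/2}-e^{-s_{k-1}/2}|\,\Vert y_i(s_k)-y_i(s_{k-1})\Vert_{L^2(\varphi)}\le\frac{t}{2}\max_k\Vert y_i(s_k)-y_i(s_{k-1})\Vert_{L^2(\varphi)}\to0$ by uniform $L^2$-continuity of $y_i$, which avoids Lemma \ref{L4.8} and works for arbitrary partitions, whereas your $O(|\triangle|)\cdot O(|\triangle|^{-1/2})$ estimate only holds for roughly uniform ones.
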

\begin{proof}
Since $u_i(t) = e^{-t/2}y_i(t)$, this corollary follows from the previous theorem in conjunction with free It\^{o} formula (see \cite{BianeSpeicher:PTRF98}). 
\end{proof} 

\section{The Brownian motion on the unitary group}\label{applications_to_unitary_BM}  

Following the method in \cite{BianeCapitaineGuionnet:InventMath03} we will prove a weak large deviation lower bound for the $n$-dimensional (left-increment) Brownian motion $U_N(t) = (U_{N,1}(t),\dots,U_{N,n}(t))$ on the unitary group $\mathrm{U}(N)$. This process is constructed from an $n$-dimensional $N\times N$ self-adjoint matrix Brownian motion $H_N(t) = (H_{N,1}(t),\dots,H_{N,n}(t))$ given by:  
\[
H_{N,i}(t) = \sum_{\alpha,\beta=1}^N \frac{B_{N,i;\alpha,\beta}(t)}{\sqrt{N}}C_{\alpha,\beta}, \quad t \geq 0,\ i=1,\dots,n, 
\]
where $(B_{N,i;\alpha,\beta}(t))_{i=1,\dots,n;\alpha,\beta=1,\dots,N}$ is a standard $nN^2$-dimensional Brownian motion with the natural filtration $(\mathcal{F}_t)_{t \geq 0}$. The matrices $\{C_{\alpha,\beta}; \alpha,\beta=1,\dots,N\}$ form an orthonormal basis of the Euclidean space $M_N(\mathbb{C})^\mathrm{sa}$ equipped with the Hilbert-Schmidt inner product $\langle A,B\rangle_{HS} = \mathrm{Tr}_N(AB)$. 

As pointed out in \cite[Subsection 7.2]{Ueda:CJM21}, for any $a=a^* \in \mathbb{C}\langle x,u\rangle$ and each $t > 0$, we have:  
\begin{equation}\label{Eq5.1} 
\begin{aligned}
\mathbb{E}[\varphi_N(a)|\mathcal{F}_t] 
&= 
\mathbb{E}[\varphi_N(a)] + \int_0^t \sum_{i=1}^n\sum_{\alpha,\beta=1}^N \mathbb{E}\left[\mathrm{tr}_N\left(\pi_N(\mathfrak{D}_{s,i}a)\left(\frac{1}{\sqrt{N}}C_{\alpha,\beta}\right)\right)\Big|\,\mathcal{F}_s\right]\,dB_{N,i;\alpha,\beta}(s), 
\end{aligned}
\end{equation} 
where $\pi_N = \pi_{(X_N,U_N)} : C^*_R\langle x,u\rangle \to M_N(\mathbb{C})$ is the random unital $*$-homomorphism, $\varphi_N := \varphi_{(X_N,U_N)} = \mathrm{tr}_N\circ\pi_N \in TS^c(C^*_R\langle x,u\rangle)$ is the empirical distribution, and $\mathrm{tr}_N = N^{-1}\mathrm{Tr}_N$ (see Subsection 2.4 for these notations). 

\subsection{Smooth continuous tracial states} 

Let $c = c^* \in \mathbb{C}\langle x,u\rangle$ be arbitrarily given. For each $N \in \mathbb{N}$ we consider the martingale $M_{c,N}(t)$ defined by: 
\[
M_{c,N}(t) := N^2\left(\mathbb{E}[\varphi_N(c)|\mathcal{F}_t] - \mathbb{E}[\varphi_N(c)]\right) = N^2\left( \mathrm{tr}_N(\mathbb{E}[\pi_N(c)|\mathcal{F}_t])  - \mathrm{tr}_N(\mathbb{E}[\pi_N(c)])\right),  
\]
where $\mathbb{E}\left[\pi_N(\mathfrak{D}_{s,i}c)|\,\mathcal{F}_s\right]$ is interpreted in a matrix-entrywise sense. By the Clark--Ocone formula \eqref{Eq5.1} we observe that 
\[
M_{c,N}(t) 
= 
N^2 \int_0^t \sum_{i=1}^n\sum_{\alpha,\beta=1}^N \mathrm{tr}_N\left(\mathbb{E}[\pi_N(\mathfrak{D}_{s,i}c)|\,\mathcal{F}_s]\left(\frac{1}{\sqrt{N}}C_{\alpha,\beta}\right)\right)\,dB_{N,i;\alpha,\beta}(s). 
\] 
Hence we can compute the quadratic variation as
\begin{align*}
\langle M_{c,N}\rangle(t) 
&= 
N^4 \int_0^t \sum_{i=1}^n \sum_{\alpha,\beta=1}^N \mathrm{tr}_N\left(\mathbb{E}[\pi_N(\mathfrak{D}_{s,i}c)|\,\mathcal{F}_s]\left(\frac{1}{\sqrt{N}}C_{\alpha,\beta}\right)\right)^2\,ds \\
&=
N \int_0^t \sum_{i=1}^n \sum_{\alpha,\beta=1}^N \left\langle  \mathbb{E}\left[\pi_N(\mathfrak{D}_{s,i}c)|\,\mathcal{F}_s\right],C_{\alpha,\beta}\right\rangle_{HS}^2\,ds \\
&= 
N  \int_0^t \sum_{i=1}^n \left\Vert\mathbb{E}\left[\pi_N(\mathfrak{D}_{s,i}c)|\,\mathcal{F}_s\right]\right\Vert_{HS}^2\,ds \\
&= 
N^2  \int_0^t \sum_{i=1}^n \mathrm{tr}_N(\mathbb{E}\left[\pi_N(\mathfrak{D}_{s,i}c)|\,\mathcal{F}_s\right]^2)\,ds 
\end{align*}
with $\Vert A\Vert_{\mathrm{tr}_N,2} := \mathrm{tr}_N(A^* A)^{1/2}$. Set 
\begin{equation*}
I_{c,N}(t) := \mathrm{tr}_N(\mathbb{E}[\pi_N(c)|\,\mathcal{F}_t]) - \mathrm{tr}_N(\mathbb{E}[\pi_N(c)]) - \frac{1}{2}\int_0^t \sum_{i=1}^n \mathrm{tr}_N(\mathbb{E}\left[\pi_N(\mathfrak{D}_{s,i}c)|\,\mathcal{F}_s\right]^2)\,ds.  
\end{equation*}
It is not difficult to see that for each $T>0$ there is a constant $C_T>0$ such that $|I_{c,N}(t)| \leq C_T$ for all $0 \leq t \leq T$. Thus, we see, by e.g.\ \cite[Corollary 3.5.13]{KaratzasShreve:Book}, that $\exp(N^2 I_{c,N}(t))$ is a bounded martingale (over any finite interval), and moreover,
\begin{equation}\label{Eq5.2} 
0 < \exp(N^2 I_{c,N}(t)) \leq e^{N^2 C_T}, \qquad 0 \leq t \leq T
\end{equation}
for each $T > 0$. It follows that for each $t > 0$, $d\mathbb{P}_{c,t,N} := \exp(N^2 I_{c,N}(t))\,d\mathbb{P}$ defines a probability measure that is equivalent to the original $\mathbb{P}$. 

In what follows, we choose and fix $T > 0$ such that $c \in \mathbb{C}\langle x, u\rangle_T$.  Consider any sequence of couplings of $\mathbb{P}_{c,T,N}$ for $N \in \mathbb{N}$, which we fix (such a sequence indeed exists!) and denote by $\mathbb{P}_c$ hereafter.  By the estimate \eqref{Eq5.2} and the disccusion in \cite[subsection 5.1]{Ueda:JOTP19} we see that the probability distributions of $\varphi_N$ under $\mathbb{P}_c$ --namely, $\Lambda \mapsto \mathbb{P}_c(\varphi_N \in \Lambda) = \mathbb{P}_{c,T,N}(\varphi_N \in \Lambda) = \mathbb{E}[\exp(N^2 I_{c,N}(T))\mathbf{1}_{(\varphi_N \in \Lambda)}]$ -- form an exponentially tight sequence. By the standard Borel--Cantelli argument, it follows that there exists a compact subset $\mathcal{K} \subset TS^c(C^*_R\langle x,u\rangle)$ such that $\mathbb{P}_c(\text{$\varphi_N \in \mathcal{K}$ for sufficiently large $N$}) = 1$, that is, $\{\varphi_N\}$ is almost surely relatively compact in $TS^c(C^*_R\langle x,u\rangle)$ under $\mathbb{P}_c$.  

Similarly to the calculation at the beginning of this subsection, we have 
\begin{equation}\label{Eq5.3}
\begin{aligned}
\mathrm{tr}_N(\mathbb{E}[\pi_N(a)|\,\mathcal{F}_t]) 
&= 
\mathrm{tr}_N(\mathbb{E}[\pi_N(a)]) 
+ 
\int_0^t \sum_{i=1}^n \sum_{\alpha,\beta=1}^N \mathrm{tr}_N(\mathbb{E}[\pi_N(\mathfrak{D}_{s,i}a)|\,\mathcal{F}_s] \mathbb{E}[\pi_N(\mathfrak{D}_{s,i}c)|\,\mathcal{F}_s])\,ds \\
&\quad+ \int_0^t \sum_{i=1}^n \sum_{\alpha,\beta=1}^N \mathrm{tr}_N\left(\mathbb{E}\left[\pi_N(\mathfrak{D}_{s,i}c)|\,\mathcal{F}_s\right]\left(\frac{1}{\sqrt{N}}C_{\alpha,\beta}\right)\right)^2\,dB_{c,N,i;\alpha,\beta}(s),   
\end{aligned}
\end{equation} 
where
\begin{equation*} \label{Eq5.x}
B_{c,N,i;\alpha,\beta}(t) := B_{N,i;\alpha,\beta}(t) - \langle B_{N,i;\alpha,\beta}, M_{c,N}\rangle(t), \qquad i=1,\dots,n,\quad \alpha,\beta = 1,\dots,N
\end{equation*}
form an $nN^2$-dimensional Brownian motion over the time interval $[0,T]$ under $\mathbb{P}_c$ thanks to the celebrated Cameron--Martin--Maruyama--Girsanov theorem (see e.g., \cite[Chapter 3, Section 3.5]{KaratzasShreve:Book}).

\begin{remark}\label{R5.1} 
{\rm We obtain the process $B_{c,N,i;\alpha,\beta}(t)$ defined by  
\[
B_{c,N,i;\alpha,\beta}(t) = B_{N,i;\alpha,\beta}(t) - \int_0^t \mathrm{tr}_N\left(\mathbb{E}[\pi_N(\mathfrak{D}_{s,i}c)|\,\mathcal{F}_s]\left(\frac{1}{\sqrt{N}}C_{\alpha,\beta}\right)\right)\,ds. 
\]
Consequently, the $n$-dimensional $N\times N$ self-adjoint matrix Brownian motion $H_{c,N}(t) = (H_{c,N,1}(t),\dots,H_{c,N,n}(t))$, where each component is given by 
\[
H_{c,N,i}(t) := \sum_{\alpha,\beta=1}^N \frac{B_{c,N,i;\alpha,\beta}(t)}{\sqrt{N}} C_{\alpha,\beta}, \qquad t \geq 0, \quad i=1,\dots,n
\]
under $\mathbb{P}_c$, yields the following SDE:  
\[
dU_{N,i}(t) = \sqrt{-1}\,dH_{c,N,i}(t)\,U_{N,i}(t) + \sqrt{-1}\,\mathbb{E}[\pi_N(\mathfrak{D}_{t,i}c)|\,\mathcal{F}_t]\,U_{N,i}(t)\,dt - \frac{1}{2}U_{N,i}(t)\,dt. 
\]
Note that the term $\mathbb{E}[\pi_N(\mathfrak{D}_{t,i}c)|\,\mathcal{F}_t]$ can be interpreted as the entry-wise expectation (with respect to $V_N(t)$) of the process 
\[
s \mapsto U_{N,i}^t(s) := V_{N,i}((s-t)\vee0)U_{N,i}(t\wedge s), \qquad i=1,\dots,n, 
\]
where $V_N(t) = (V_{N,1}(t),\dots,V_{N,n}(t))$ is an $n$-dimensional unitary Brownian motion independent of the current filtration.  
}
\end{remark}

We denote by $M^0_N(t)$ the martingale part of the right-hand side of the identity \eqref{Eq5.3}. 

\begin{lemma}\label{L5.2} $\displaystyle \lim_{N\to\infty} \sup_{t \geq 0} |M^0_N(t)| = 0$ almost surely under $\mathbb{P}_c$. \end{lemma}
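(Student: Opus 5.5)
The proof will be a quadratic-variation estimate followed by an exponential martingale inequality and Borel--Cantelli. Fix $a=a^*$, which we may assume lies in $\mathbb{C}\langle x,u\rangle_{T'}$ with $T'\geq T$. The martingale part of \eqref{Eq5.3} is
\[
M^0_N(t)=\int_0^t\sum_{i=1}^n\sum_{\alpha,\beta=1}^N \mathrm{tr}_N\Big(\mathbb{E}[\pi_N(\mathfrak{D}_{s,i}a)|\,\mathcal{F}_s]\,\tfrac{1}{\sqrt N}C_{\alpha,\beta}\Big)\,dB_{c,N,i;\alpha,\beta}(s),
\]
which is the Clark--Ocone integrand for $a$ rewritten against the Girsanov-shifted Brownian motion. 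This is a continuous $\mathbb{P}_c$-martingale on $[0,T]$ (and beyond, where we continue with the original law). Since $\mathfrak{D}_{s,i}a=0$ for $s>T'$ by Lemma \ref{L3.1}, $M^0_N$ is constant after $T'$. Hence $\sup_{t\geq0}|M^0_N(t)|=\sup_{0\le t\le T'}|M^0_N(t)|$, and we only need a finite horizon.

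\textbf{Step 1: deterministic bound on the quadratic variation.} We repeat the computation done for $\langle M_{c,N}\rangle$ just before \eqref{Eq5.3}, using orthonormality of $\{C_{\alpha,\beta}\}$ in $M_N(\mathbb{C})^{\mathrm{sa}}$ and the fact that $\mathbb{E}[\pi_N(\mathfrak{D}_{s,i}a)|\mathcal{F}_s]$ is self-adjoint because $\mathfrak{D}_{s,i}$ is $*$-preserving. This gives
\[
\langle M^0_N\rangle(t)=\frac{1}{N^2}\int_0^t\sum_{i=1}^n \mathrm{tr}_N\big(\mathbb{E}[\pi_N(\mathfrak{D}_{s,i}a)|\mathcal{F}_s]^2\big)\,ds\le \frac{K_a T'}{N^2}.
\]
Here $K_a:=\sup_{s}\sum_i\|\mathfrak{D}_{s,i}a\|^2<\infty$. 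The constant is finite because, by Lemma \ref{L3.1}, $\mathfrak{D}_{s,i}a$ is a finite sum of words in unitaries and the $x_j$'s, with coefficients of modulus bounded independently of $s$. Moreover, $\pi_N$ and conditional expectation are contractive in operator norm. The key point is that this bound is deterministic, so it holds under $\mathbb{P}_c$ as well.

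\textbf{Step 2: exponential inequality.} For a continuous martingale with $\langle M\rangle\le \sigma^2$ a.s., we have $\mathbb{P}(\sup_{t\le T'}|M(t)|\ge\varepsilon)\le 2\exp(-\varepsilon^2/(2\sigma^2))$. This follows from Doob's inequality applied to the exponential martingale $\exp(\lambda M-\lambda^2\langle M\rangle/2)$, or from Bernstein's inequality for continuous martingales. With $\sigma^2=K_aT'/N^2$ we get
\[
\mathbb{P}_c\Big(\sup_{t\ge0}|M^0_N(t)|\ge\varepsilon\Big)\le 2\exp\!\Big(-\frac{\varepsilon^2N^2}{2K_aT'}\Big).
\]
This is summable in $N$. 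By Borel--Cantelli, applied along $\varepsilon=1/k$ for $k\in\mathbb{N}$, we conclude $\sup_t|M^0_N(t)|\to0$ $\mathbb{P}_c$-a.s.

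\textbf{Main obstacle.} The delicate point is justifying that $M^0_N$ really is a $\mathbb{P}_c$-martingale with the stated bracket. One concern is the coupling: $\mathbb{P}_c$ couples different $N$, but each estimate above concerns only the marginal $\mathbb{P}_{c,T,N}$, and Borel--Cantelli needs no independence. A second concern is the range $t>T$, where the Girsanov density is frozen at time $T$. Under $\mathbb{P}_{c,T,N}$, the $B_{N}$-increments after $T$ remain Brownian, since the density is $\mathcal{F}_T$-measurable. So $M^0_N$ stays a martingale there, with the same deterministic bracket bound.
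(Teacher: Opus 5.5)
Your proof is correct and follows essentially the same route as the paper. Both arguments combine a deterministic bound $\langle M^0_N\rangle \le C/N^2$ on the quadratic variation, a maximal tail inequality, the first Borel--Cantelli lemma, and the fact that $M^0_N$ is constant after a finite horizon. The paper uses Chebyshev plus Burkholder--Davis--Gundy, which gives the summable bound $CC'/(\varepsilon^2N^2)$; your exponential martingale inequality gives a Gaussian tail $e^{-\varepsilon^2N^2/(2K_aT')}$, stronger than needed. Two of your details also make explicit points the paper's last sentence passes over: you take the horizon $T'$ from $a$ rather than the $T$ fixed for $c$, and you check that $B_{c,N}$ remains Brownian after $T$.
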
 
\begin{proof} 
Write $M_N^\star(T) := \sup_{0 \leq t \leq T} |M_N^0(t)|$ for simplicity. By Chebyshev's inequality and the Burkholder--Davis--Gundy theorem (see e.g., \cite[Theorem 3.3.28]{KaratzasShreve:Book}) there exists a constant $C > 0$ so that 
\[
\varepsilon^2 \mathbb{P}_c(M_N^\star(T) \geq \varepsilon) \leq C\,\mathbb{E}_c[\langle M_N^0\rangle(T)]
\]
for all $\varepsilon>0$, where $\mathbb{E}_c$ denotes the expectation with respect to $\mathbb{P}_c$. Analogous to the estimate \eqref{Eq5.2} we observe that the quadratic variation satisfies:
\[
\langle M_N^0\rangle(T) = \frac{1}{N^3} \int_0^T \sum_{i=1}^n \Vert \mathbb{E}[\pi_N(\mathfrak{D}_{s,i}c)|\,\mathcal{F}_s]\Vert_{HS}^2\,ds \leq \frac{C'}{N^2}
\]
for some $C' > 0$. Therefore, we obtain
\[
\mathbb{P}_c(M^\star_N(T) \geq \varepsilon) \leq \frac{CC'}{\varepsilon^2 N^2}. 
\]
From this estimate, a standard Borel--Cantelli argument implies that $\lim_{N\to\infty} M_N^\star(T) = 0$ as $N \to \infty$ almost surely under $\mathbb{P}_c$. Since $M_N^0(t) = M_N^0(T)$ holds for all $t > T$, we conclude the desired assertion.  
\end{proof}

Based on the preceding discussion, both the relative compactness of $\{\varphi_N\}$ in $TS^c(C^*_R\langle x,u\rangle)$ and the almost sure convergence of $\sup_{t\geq 0}|M_N^0(t)|$ to $0$ as $N\to\infty$ hold simultaneously under $\mathbb{P}_c$. 

For any sample path within this event of probability one, let $\varphi \in TS^c(C^*_R\langle x,u\rangle)$ be a limit point of $\{\varphi_N\}$; specifically, there exists a subsequence $\varphi_{N_k}$ that converges to $\varphi$ in $TS^c(C^*_R\langle x,u\rangle)$ as $k\to\infty$. 

What we actually proved in \cite[section 4]{Ueda:JOTP19}, when applied to the current setup, shows that 
\begin{equation}\label{Eq5.4}
\lim_{k\to\infty}\sup_{t \geq 0} |\mathrm{tr}_{N_k}(\mathbb{E}[\pi_{N_k}(\mathfrak{D}_{t,i}a)|\,\mathcal{F}_t]^m) - \varphi(E^{\varphi^t}_t(\mathfrak{D}_{t,i}a)^m)| = 0 
\end{equation}
for every $a = a^* \in \mathbb{C}\langle x,u\rangle$, $i=1,\dots,n$ and $m \in \mathbb{N}$. Consequently, in conjunction with Lemma \ref{L3.4}, the formula \eqref{Eq5.3} implies that $\varphi$ satisfies item (ii) in Theorem \ref{T3.12}. By Corollary \ref{C4.13} we conclude that the free SDE \eqref{freeSDE} holds in $L^2(\varphi)$ with the drift term $\xi(t) = (E_t^\varphi(\mathfrak{D}_{t,1}c),\dots,E_t^\varphi(\mathfrak{D}_{t,n}c))$. 

Consider $w = u_i(t_1)x_1 u_i(t_2) x_2 \in \mathbb{C}\langle x,u\rangle$ with $0 < t_1 < t_2$ for example.
A tedious calculation shows 
\begin{align*}
E_t^{\varphi^t}(\mathfrak{D}_{t,i}w) 
&= 
\sqrt{-1}\,\mathbf{1}_{[0,t_1]}(t)\,e^{-2^{-1}((t_1-s)\vee0 + (t_2 - t)\vee0)} u_i(t_1 \wedge t) x_1 u_i(t_2\wedge t)x_2 \\
&\quad+
\sqrt{-1}\,\mathbf{1}_{[0,t_2]}(t)\,e^{-2^{-1}((t_1-t)\vee0 + (t_2 - t)\vee0)} u_i(t_2\wedge t)x_2 u_i(t_1 \wedge t) x_1 \\ 
&\quad 
- ((t_1 - t)\vee0)\,\varphi(u_i(t_2\wedge s)x_t) u_i(t_1\wedge t)x_1 \\
&\quad 
- ((t_1 - t)\vee0)\,\varphi(u_i(t_1\wedge t)x_1) u_i(t_2\wedge t)x_2. 
\end{align*}
In this way, we see that $E_t^{\varphi^t}(\mathfrak{D}_{t,i} c)$ is a linear combination of monomials of the form 
\[
x_{j_0} u_{i_1}(t_1 \wedge t) x_{j_1} u_{i_2}(t_2 \wedge t) \cdots u_{i_m}(t_m\wedge t) x_{j_m}
\]
with piecewise-continuous (in $t$) coefficients. We denote the drift term as $f_i(t,u) := E_t^{\varphi_t}(\mathfrak{D}_{s,i}c)$, where the notation emphasizes its dependence on $u$. For a fixed potential $c$, this $f_i$ is a ``functional" that depends on the trajectory of the process $u$ up to time $t$) as well as on $t$ itself. Consequently, our free SDE \eqref{freeSDE} is 
\begin{equation}\label{Eq5.5}
du_i(t) = \sqrt{-1}\,db_i(t) u_i(t) + \left(\sqrt{-1}\,f_i(t,u) - \frac{1}{2}u_i(t)\right)\,dt, \qquad i=1,\dots,n.
\end{equation}

Here is a proposition, which is just a special case of the natural free probability counterpart of the well-known theorem in stochastic analysis.  

\begin{proposition}\label{P5.3} 
The free SDE \eqref{freeSDE} with the drift term $\xi(t) = (E_t^\varphi(\mathfrak{D}_{t,1}c),\dots,E_t^\varphi(\mathfrak{D}_{t,n}c))$ possesses a unique solution. Furthermore, this solution is adapted to the filtration $(W^*(x, b[0,t]))_{t\geq0}$ rather than the full filtration $(L^\infty(\varphi)_t)_{t\geq0}$. Here, $W^*(x,b[0,t])$ denotes the $W^*$-subalgebra of $L^\infty(\varphi)_t$ generated by the initial elements $x_j$ and the free Brownian motion $b_i(s)$ for $s \leq t$.    
\end{proposition}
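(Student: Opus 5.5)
The plan is to imitate the classical Picard iteration for SDEs with Lipschitz, path-dependent drifts, carried out in the complete space of adapted processes with values in $L^2$ (uniform in time over $[0,T]$). The structural input is the description just obtained. The drift $f_i(t,u)=E_t^{\varphi^t}(\mathfrak{D}_{t,i}c)$ is a finite linear combination, with bounded piecewise-continuous coefficients, of monomials
\[
x_{j_0}u_{i_1}(t_1\wedge t)x_{j_1}\cdots u_{i_m}(t_m\wedge t)x_{j_m}
\]
multiplied by scalar factors of the form $\varphi(\text{monomial})$. Fix $T$ with $c\in\mathbb{C}\langle x,u\rangle_T$ and work on $[0,T]$. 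Beyond $T$ the drift vanishes, so the equation reduces to the free unitary Brownian motion SDE, which is handled by Biane's results.

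\textbf{Step 1 (reformulation).} Following Theorem \ref{T4.12}, I pass to $y_i(t)=e^{t/2}u_i(t)$ and write the equation in integral form:
\[
y_i(t)=1+\sqrt{-1}\int_0^t db_i(s)\,y_i(s)+\sqrt{-1}\int_0^t e^{s/2}f_i(s,u)\,ds .
\]
The unknown is an $n$-tuple of processes $v(t)$ in the unit ball of the von Neumann algebra $\mathcal{M}:=L^\infty(\varphi)$, adapted to a given filtration $(\mathcal{B}_t)$. Here $\mathcal{B}_t$ is either $L^\infty(\varphi)_t$ or $W^*(x,b[0,t])$, and in both cases $b$ is a free Brownian motion with respect to it by Proposition \ref{P4.11}.

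The drift functional is defined on such tuples by $F_i(t,v)$: the same polynomial expression with $u$ replaced by $v$, and with the scalar coefficients $\varphi(\cdots)$ evaluated at $v$. Because every letter has $\|\cdot\|\le1$ and the $x_j$ have norm at most $R$, telescoping products gives the Lipschitz estimate
\[
\|F_i(t,v)-F_i(t,w)\|_{L^2}\le L\sup_{s\le t}\|v(s)-w(s)\|_{L^2}.
\]
For the scalar coefficients I use $|\varphi(\cdot)|\le\|\cdot\|_{L^2}$ together with the same telescoping.

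\textbf{Step 2 (existence within the smaller filtration).} Define the Picard map
\[
\Phi(v)_i(t)=e^{-t/2}\Bigl(1+\sqrt{-1}\int_0^t db_i(s)\,e^{s/2}v_i(s)+\sqrt{-1}\int_0^t e^{s/2}F_i(s,v)\,ds\Bigr).
\]
Its stochastic integral is the free Itô integral of Biane--Speicher with respect to the $(\mathcal{B}_t)$-free Brownian motion $b$. The free Itô isometry $\|\int_0^t db\,\eta\|_{L^2}^2=\int_0^t\|\eta\|_{L^2}^2\,ds$ and a free Doob-type bound for the supremum then give
\[
\sup_{t\le\tau}\|\Phi(v)(t)-\Phi(w)(t)\|_{L^2}^2\le C\int_0^\tau\sup_{s\le r}\|v(s)-w(s)\|_{L^2}^2\,dr.
\]
Iterating yields the factorial bound, so the iterates converge. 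Starting from $v\equiv1$, each iterate lies in $W^*(x,b[0,t])$, since it is built only from $x$, $b$ and scalars. The limit therefore lies in the closure of $W^*(x,b[0,t])$ and solves the equation there.

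Unitarity of the limit is not given by the Picard iteration itself. I would check it with the free Itô formula: because $f_i$ is self-adjoint, the drift $\sqrt{-1}f_i\,u_i$ contributes skew-adjoint terms to $d(u_i^*u_i)$ and $d(u_iu_i^*)$. Hence $u_i^*u_i-1$ and $u_iu_i^*-1$ satisfy a linear equation with zero initial value and vanish. This is done after uniqueness is established, so that the iteration can stay in $L^2$.

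\textbf{Step 3 (uniqueness and identification).} Any two solutions in the larger filtration $L^\infty(\varphi)_t$ satisfy the same Gronwall inequality, and so coincide in $L^2(\varphi)$. By Corollary \ref{C4.13}, the original $u$ is one such solution. It therefore equals the solution constructed in Step 2, which is adapted to $(W^*(x,b[0,t]))_t$.

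\textbf{Main obstacle.} I expect the main obstacle to be the Lipschitz and isometry estimates for operator-valued integrands that are only in $L^2$ rather than bounded. The products in $F$ need $L^\infty$ control of all but one factor. I would handle this by first truncating, or by noting that all iterates are unitary-valued up to the scalar factor $e^{t/2}$. Failing that, I would run the contraction directly in the set of adapted processes with $\|v(t)\|_{L^\infty}\le1$, which is complete in the $L^2$-metric because the unit ball is $L^2$-closed.
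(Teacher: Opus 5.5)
\textbf{The gap: a polynomial drift and no invariant bounded set.} Your architecture matches the paper's: Picard iteration for a solution adapted to $(W^*(x,b[0,t]))_t$, the free It\^o formula for unitarity, and Gronwall for uniqueness and identification with $u$. What is missing is the device that makes the Picard iteration work, and two of your three suggested substitutes are false.

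\begin{itemize}
\item The drift $F_i(t,v)$ is a (trace) polynomial in the path $v$, in general of degree larger than one. It is therefore not even defined on $L^2$-valued processes.
\item Your Lipschitz bound in Step 1 is proved only for tuples in the unit ball of $L^\infty(\varphi)$. No global version exists, since the constant grows with the norm bound.
\item Your Picard map does not preserve that unit ball. Take $c=0$ and $v\equiv 1$. Then $\Phi(1)(t)=e^{-t/2}\bigl(1+\sqrt{-1}\int_0^t e^{s/2}\,db(s)\bigr)$ is normal, and $\int_0^t e^{s/2}\,db(s)$ is semicircular of variance $e^t-1$. Hence $\|\Phi(1)(t)\|_{L^\infty(\varphi)}=\sqrt{4-3e^{-t}}>1$ for $t>0$.
\item So the iterates are not unitary up to the factor $e^{t/2}$; as you note yourself, only the exact solution is. The $L^2$ isometry gives no control of their $L^\infty$-norms.
\end{itemize}

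As a consequence, the contraction estimate of Step 2 is unavailable along the iteration. The limit is not known to lie in $L^\infty(\varphi)$, so it is not known to lie in $W^*(x,b[0,t])$ rather than its $L^2$-closure. The Gronwall comparison with $u$ in Step 3 is also unjustified, because it needs $L^\infty$ bounds on both processes. Your plan to prove unitarity only after uniqueness is circular.

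\textbf{How the paper closes it.} Following Biane--Speicher, the paper multiplies the drift by the cutoff $\prod_{i}h\bigl(\sup_{s\le t}\|u_i(s)\|_{L^\infty(\varphi)}\bigr)$. Here $h$ is Lipschitz, equal to $1$ on $[0,1]$ and to $0$ on $[2,\infty)$. This makes the truncated drift globally Lipschitz for the $L^\infty$-sup norm.

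The Picard iteration is then run in operator norm. The term $\int_0^t db_i(s)\,u_i(s)$ is controlled by Biane--Speicher's free Burkholder--Gundy inequality rather than by the $L^2$ isometry. The switch to $L^\infty$ is forced: the cutoff is not continuous for the $L^2$-metric, so truncation cannot be combined with your $L^2$ framework.

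The argument then proceeds in this order.
\begin{enumerate}
\item Picard iteration yields a bounded solution of the truncated equation, adapted to $(W^*(x,b[0,t]))_t$.
\item Since $\tilde{f}_i$ is self-adjoint, the free It\^o formula shows this solution is unitary.
\item Hence the cutoff equals $1$, and the solution solves the original equation.
\item Only now does Gronwall uniqueness among bounded solutions, in the full filtration, identify it with $u$.
\end{enumerate}

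A workable alternative is localization in time. On an interval whose length depends only on a norm bound for the data, the Picard map does preserve an $L^\infty$-ball; this again uses free Burkholder--Gundy. Unitarity of each local solution then lets you restart with data of norm $1$. Either way, some truncation or localization together with an operator-norm estimate for the stochastic integral is needed.
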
 
\begin{proof} 
We want to solve the free SDE \eqref{Eq5.5} by the usual Picard iteration procedure. To this end, we need to ``truncate" the function $f_i(u,t)$ following Biane--Speicher's idea \cite[Theorem 3.1]{BianeSpeicher:AIHP01}. 

Choose a continuous function $g(t)$ such that $g(t) = 0$ if either $0 \leq t \leq 1$ or $t \geq 2$, $0 \geq g(t) \geq -2$ and $\int_1^2 g(t)\,dt = -1$. Set $h(x) := 1+ \int_0^x g(t)\,dt$, $x \geq 0$. Then $h(x)$ is globally Lipschitz with a Lipschitz constant $2$.   
Define 
\[
\tilde{f}_i(t,u) := h\big(\sup_{0 \leq s \leq t} \Vert u_1(s)\Vert_{L^\infty(\varphi)}\big)\cdots h\big(\sup_{0 \leq s \leq t} \Vert u_n(s)\Vert_{L^\infty(\varphi)}\big)\,f_i(u,t). 
\]
If some of $u_i(s)$ with $s \leq t$ has the $L^\infty$-norm greater than $2$, then $\tilde{f}_i(u,t)$ must be $0$. By this fact, it is easy to see that 
\[
\Vert \tilde{f}_i(t,u) - \tilde{f}_i(t,v)\Vert_{L^\infty(\varphi)} \leq C \max_{i=1,\dots,n} \sup_{0 \leq s \leq t} \Vert u_i(s) - v_i(s)\Vert_{L^\infty(\varphi)}
\]
holds for some $C > 0$. Moreover, we can control the coefficient of $db_i(t)$ by the free Burkholder--Gundy inequality due to Biane--Speicher \cite[subsection 3.2]{BianeSpeicher:PTRF98}; thus, the Picard iteration can be applied to the integral equation
\[
u_i(t) = 1 + \sqrt{-1}\int_0^t db_i(s)\,u_i(s) + \int_0^t \sqrt{-1}\,\widetilde{f}_i(s,u)u_i(s) - \frac{1}{2}u_i(s)\,ds. 
\]
In this way, we can establish the existence of solution to the free SDE \eqref{Eq5.5} with $\tilde{f}_i$ in place of $f_i$ adapted to the filtration $(W^*(x,b[0,t]))_{t \geq 0}$. Since $f_i(t,u)^* = f_i(t,u)$ (and hence $\tilde{f}_i(t,u)^* = \tilde{f}_i(t,u)$ too), the free It\^{o} formula (see \cite{BianeSpeicher:PTRF98}) enables us to 
see that $u_i(t)^*u_i(t) = 1$ for all $t \geq 0$. This says that all $u_i(t)$ must be unitary elements and thus $\tilde{f}_i(t,u) = f_i(t,u)$ ($ = E_t^{\varphi^t}(\mathfrak{D}_{t,i}c)$) holds for any solutions $u(t)$. The uniqueness of solution to the free SDE \eqref{Eq5.5} clearly holds in the full filtration $(L^\infty(\varphi)_t)_{t \geq 0}$ too.  
\end{proof}

The description of $E_t^{\varphi^t}(\mathfrak{D}_{t,i}c)$ falls within the notion of ``trace polynomials", a phenomenon that was likely first explicitly identified by C\'{e}bron \cite{Cebron:JFA13}. In this respect, Jekel's detailed investigation into trace polynomials (see e.g., his thesis \cite{Jekel:thesis}) provides a valuable foundation for further study in this direction. 
 
The following is one of the main results of this subsection. 
 
\begin{theorem}\label{T5.4}  
Given a self-adjoint potential $c = c^* \in \mathbb{C}\langle x, u\rangle$, there exists a unique continuous tracial state $\varphi_c \in TS^c(C_R^*\langle x,u\rangle)$ such that item (ii) of Theorem \ref{T3.12} is satisfied with $\varphi_c$ and the drift term $\xi(t) = (E_t^{\varphi_c^t}(\mathfrak{D}_{t,1}c),\dots,E_t^{\varphi_c^t}(\mathfrak{D}_{t,n}c))$. Moreover, the empirical distribution $\varphi_N = \varphi_{(X_N,U_N)}$ converges to $\varphi_c$ in $TS^c(C^*_R\langle x,u\rangle)$ almost surely under $\mathbb{P}_c$. 
\end{theorem}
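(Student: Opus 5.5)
The plan has two parts: existence and convergence from the limit-point analysis already carried out, and uniqueness from Proposition \ref{P5.3}.

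\emph{Existence.} Work on the $\mathbb{P}_c$-probability-one event where $\{\varphi_N\}$ is relatively compact in $TS^c(C^*_R\langle x,u\rangle)$ and $\sup_{t\ge0}|M^0_N(t)|\to0$ (Lemma \ref{L5.2}). Fix a limit point $\varphi$, with $\varphi_{N_k}\to\varphi$. Pass to the limit in \eqref{Eq5.3}, taking $t\ge T$ so that $\mathrm{tr}_N(\mathbb{E}[\pi_N(a)|\mathcal{F}_t])$ can be compared with $\varphi_N(a)$ for $a\in\mathbb{C}\langle x,u\rangle_T$ (general $a$ is handled by enlarging $T$). The martingale part vanishes, and the drift integrand converges uniformly in $t$ by \eqref{Eq5.4}, applied by polarization to the product $\mathfrak{D}_{s,i}a\cdot\mathfrak{D}_{s,i}c$. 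The initial term $\mathrm{tr}_N(\mathbb{E}[\pi_N(a)])$ converges to $\sigma_0^{\mathrm{frBM}}(a)$ by the known convergence of $(X_N,U_N)$ to $\sigma_0^{\mathrm{frBM}}$. Lemma \ref{L3.4} then gives item (ii) of Theorem \ref{T3.12} with drift $\xi(t)=(E_t^{\varphi^t}(\mathfrak{D}_{t,i}c))_i$. This drift lies in $\mathbb{C}\langle x,u\rangle_t$ and is bounded in $L^\infty$ on $[0,T]$ by Corollary \ref{C3.3}, so conditions (A) and (B) of Section \ref{a_free_martingale_problem} hold. Existence of $\varphi_c$ is thus immediate once uniqueness shows every limit point is the same.

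\emph{Uniqueness.} Let $\varphi,\psi$ both satisfy item (ii) with their own drifts of the above form. By Corollary \ref{C4.13}, in each $L^\infty$ the process $u$ solves \eqref{Eq5.5}, where $f_i(t,u)$ is a fixed linear combination of monomials $x_{j_0}u_{i_1}(t_1\wedge t)\cdots$. The main obstacle is that these coefficients depend on the state itself through scalar factors such as $\varphi(u_i(t_2\wedge t)x_2)$, as in the example after Remark \ref{R5.1}. So the SDE is of McKean--Vlasov type, and Proposition \ref{P5.3} alone only gives uniqueness once the coefficients are frozen.

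To handle this I would run a coupled fixed-point argument. Realize both solutions on a common space $(W^*(x)\ast\text{free BM},\tau)$, where $x$ has law $\sigma_0$ and is free from an $n$-dimensional free Brownian motion $b$. This is legitimate because Proposition \ref{P5.3} shows each solution is adapted to $W^*(x,b[0,t])$. Hence $\varphi$ is the pushforward of $\tau$ under the strong solution map, and since the $b$ of Proposition \ref{P4.11} is free from $x$, the joint law of $(x,b)$ is the same in both cases. Now consider the two solutions $u,v$ driven by the same $b$, each with its own state-dependent coefficients, and set $\Delta(t)=\max_i\sup_{s\le t}\Vert u_i(s)-v_i(s)\Vert_{L^\infty}$. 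The scalar coefficients are traces of bounded monomials in the path up to time $t$, so they differ by at most $C\Delta(t)$. Together with the Lipschitz bound from the proof of Proposition \ref{P5.3} and the free Burkholder--Gundy inequality of Biane--Speicher, this gives $\Delta(t)\le C\int_0^t\Delta(s)\,ds$ on $[0,T]$. Gronwall then yields $u=v$, hence $\varphi=\psi$ on $\mathbb{C}\langle x,u\rangle_T$ for all $T$.

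\emph{Convergence.} Uniqueness shows that every limit point of $\{\varphi_N\}$ on the full-measure event equals $\varphi_c$. Relative compactness then gives $\varphi_N\to\varphi_c$ almost surely under $\mathbb{P}_c$.
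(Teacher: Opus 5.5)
Your proposal follows essentially the paper's route. Existence comes from limit points of $\varphi_N$ under $\mathbb{P}_c$ via \eqref{Eq5.3}--\eqref{Eq5.4}, uniqueness from realizing both solutions in a common $W^*(x,b)$ using the strong solvability and adaptedness of Proposition \ref{P5.3}, and convergence from uniqueness plus relative compactness; your explicit Gronwall argument for the state-dependent trace coefficients spells out what the paper folds into the trace-polynomial Lipschitz bound of Proposition \ref{P5.3} (with the free Burkholder--Gundy inequality it should read $\Delta(t)^2\le C\int_0^t\Delta(s)^2\,ds$). One small repair: passing to the limit in \eqref{Eq5.3} only for $t$ beyond the largest time occurring in $a$ gives item (ii) only for large $T$, so for general $T$ you also need $\mathrm{tr}_{N_k}(\mathbb{E}[\pi_{N_k}(a)|\mathcal{F}_t])\to\varphi^t(a)$, which the same analysis of \cite[section 4]{Ueda:JOTP19} behind \eqref{Eq5.4} supplies.
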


We denote by $TS^\omega(C^*_R\langle x,u\rangle)$ all the tracial states $\varphi_c$ determined from $c = c^* \in \mathbb{C}\langle x,u\rangle$ thanks to this theorem. 

\begin{proof}
The existence of $\varphi_c$ has already been established. Given the preceding discussion, the almost sure convergence of $\varphi_N$ to $\varphi_c$ under $\mathbb{P}_c$ follows immediately from the uniqueness of $\varphi_c$. Thus, it remains only to prove the uniqueness of $\varphi_c$. 

Assume that there is another $\psi \in TS^c(C^*_R\langle x,u\rangle)$ such that item (ii) of Theorem \ref{T3.12} holds with $\psi$ and $\xi(t) = (E_t^{\psi^t}(\mathfrak{D}_{t,1}c),\dots,E_t^{\psi^t}(\mathfrak{D}_{t,n}c))$. By Corollary \ref{C4.13} there exists an $n$-dimensional free Brownian motion $b'(t) = (b'_1(t),\dots,b'_n(t))$ adapted to the filtration $L^\infty(\psi)_t$, $t \geq 0$, such that the free SDE 
\[
du_i(t) = \sqrt{-1}\,(db'_i(t) + \sqrt{-1}\,E_t^{\psi_t}(\mathfrak{D}_{t,i}c)dt)u_i(t) - \frac
{1}{2}u_i(t)\,dt, \qquad i=1,\dots,n
\] 
holds in $L^\infty(\psi)$. By Proposition \ref{P5.3} we have a trace-preserving isomorphism from $L^\infty(\psi) = W^*(x,b')$ to $L^\infty(\varphi_c) = W^*(x,b)$ that maps $x_j$ and $b'_i(t)$ in $L^\infty(\psi)$ to $x_j$ $b_i(t)$ in $L^\infty(\varphi_c)$, respectively. This isomorphism transforms the free SDE \eqref{Eq5.5} with $b'_i(t)$ and $\psi$ to that with $b_i(t)$ and $\varphi_c$. The uniqueness of solution to the free SDE in Proposition \ref{P5.3} shows that the isomorphism maps the solution in $L^\infty(\psi)$ to the one in $L^\infty(\varphi_c)$. It turns out that $\psi = \varphi_c$ in $TS^c(C^*_R\langle x,u\rangle)$ holds.     
\end{proof}

\subsection{A weak large deviation result} 
Theorem \ref{T3.12} leads to the following identity: 
\begin{equation*}
I(\varphi_c) = \frac{1}{2}\int_0^\infty \sum_{i=1}^n \Vert E_t^{\varphi_c^t}(\mathfrak{D}_{t,i}c)\Vert_{L^2(\varphi_c)}^2\,dt
\end{equation*}
for any $c = c^* \in \mathbb{C}\langle x,u\rangle$. 

\begin{proposition}\label{P5.5} 
For any $c = c^* \in \mathbb{C}\langle x,u \rangle$,  
\[
\lim_{\varepsilon\searrow0}\varliminf_{N\to\infty} \frac{1}{N^2}\log\mathbb{P}(d(\varphi_N,\varphi_c) < \varepsilon) \geq -I(\varphi_c)
\]
holds. 
\end{proposition}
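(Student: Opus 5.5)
The plan is the standard change-of-measure argument of Biane--Capitaine--Guionnet: tilt by the exponential martingale $\exp(N^2 I_{c,N}(T))$ so that the tilted law concentrates near $\varphi_c$ (Theorem \ref{T5.4}), then undo the tilt with Jensen's inequality. Fix $c=c^*\in\mathbb{C}\langle x,u\rangle_T$ and $\varepsilon>0$, and set $B_\varepsilon:=\{d(\varphi_N,\varphi_c)<\varepsilon\}$. Since $d\mathbb{P}_{c,T,N}=\exp(N^2 I_{c,N}(T))\,d\mathbb{P}$ is equivalent to $\mathbb{P}$, we have
\[
\mathbb{P}(B_\varepsilon)=\mathbb{E}_{c,T,N}\big[\exp(-N^2 I_{c,N}(T))\mathbf{1}_{B_\varepsilon}\big].
\]
By Jensen's inequality applied to the normalized measure $\mathbb{P}_{c,T,N}(\,\cdot\,|B_\varepsilon)$ we get
\[
\frac{1}{N^2}\log\mathbb{P}(B_\varepsilon)\ \geq\ \frac{1}{N^2}\log\mathbb{P}_{c,T,N}(B_\varepsilon)-\frac{\mathbb{E}_{c,T,N}[I_{c,N}(T)\mathbf{1}_{B_\varepsilon}]}{\mathbb{P}_{c,T,N}(B_\varepsilon)}.
\]
By Theorem \ref{T5.4}, $\varphi_N\to\varphi_c$ almost surely under the coupling $\mathbb{P}_c$, so $\mathbb{P}_{c,T,N}(B_\varepsilon)=\mathbb{P}_c(B_\varepsilon)\to1$. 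Hence the first term tends to $0$ and the denominator of the second tends to $1$.

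It remains to prove $\lim_{N\to\infty}\mathbb{E}_{c,T,N}[I_{c,N}(T)\mathbf{1}_{B_\varepsilon}]=I(\varphi_c)$. We have the uniform bound $|I_{c,N}(T)|\leq C_T$ (stated before \eqref{Eq5.2}), and $\mathbf{1}_{B_\varepsilon}\to1$ almost surely under $\mathbb{P}_c$. So by dominated convergence it suffices to show $I_{c,N}(T)\to I(\varphi_c)$ almost surely under $\mathbb{P}_c$. Since $c\in\mathbb{C}\langle x,u\rangle_T$, the quantity $\mathrm{tr}_N(\mathbb{E}[\pi_N(c)|\mathcal{F}_T])$ equals $\varphi_N(c)$. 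The term $\mathrm{tr}_N(\mathbb{E}[\pi_N(c)])$ is deterministic, and the argument of \cite[section 4]{Ueda:JOTP19} (using the convergence of $X_N$ to $\sigma_0$) gives its convergence to $\sigma_0^{\mathrm{frBM}}(c)$. The integral term converges to $\frac12\int_0^T\sum_i\Vert E_t^{\varphi_c^t}(\mathfrak{D}_{t,i}c)\Vert^2_{L^2(\varphi_c)}\,dt$: apply \eqref{Eq5.4} with $m=2$ along every subsequence, using that all limit points equal $\varphi_c$, and that $\mathfrak{D}_{t,i}c=0$ for $t>T$, which lets the integral run over $[0,\infty)$. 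Hence
\[
I_{c,N}(T)\to\varphi_c(c)-\sigma_0^{\mathrm{frBM}}(c)-\frac12\Vert\xi\Vert^2_{L^2},\qquad \xi(t):=\big(E_t^{\varphi_c^t}(\mathfrak{D}_{t,i}c)\big)_{i=1}^n.
\]

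Next we identify this limit with $I(\varphi_c)$. Since $c\in\mathbb{C}\langle x,u\rangle_T$, we have $\varphi_c^T(c)=\varphi_c(c)$. By item (ii) of Theorem \ref{T3.12} together with Lemma \ref{L3.4},
\[
\varphi_c(c)-\sigma_0^{\mathrm{frBM}}(c)=\int_0^T\sum_i\big(E_t^{\varphi_c^t}(\mathfrak{D}_{t,i}c)\,\big|\,\xi_i(t)\big)\,dt=\Vert\xi\Vert_{L^2}^2.
\]
So the limit is $\frac12\Vert\xi\Vert_{L^2}^2$, and the last part of Theorem \ref{T3.12}, as recorded in the identity preceding this proposition, says this equals $I(\varphi_c)$. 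Combining the steps, $\varliminf_N N^{-2}\log\mathbb{P}(B_\varepsilon)\geq -I(\varphi_c)$ for every $\varepsilon>0$, and letting $\varepsilon\searrow0$ gives the claim.

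The main obstacle is the almost sure convergence of $I_{c,N}(T)$. Its conditional-expectation integrand must be controlled uniformly in $t$ along subsequences, which is exactly what \eqref{Eq5.4} provides but only for limit points. The deterministic initial term also needs separate handling. If \eqref{Eq5.4} turns out to be insufficient for the deterministic term, I would replace $\mathbb{E}[\varphi_N(c)]$ by its limit plus an $o(1)$ error, using the usual concentration estimate for Brownian motion on $\mathrm{U}(N)$ with speed $N^2$.
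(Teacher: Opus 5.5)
Your argument is correct. It uses the same tilting by $\exp(N^2 I_{c,N}(T))$ and the same inputs (Theorem \ref{T5.4} together with \eqref{Eq5.4}) as the paper, and differs only in how the tilt is undone.

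\textbf{How the paper undoes the tilt.} The paper bounds $\exp(-N^2 I_{c,N}(T))$ from below on $\{d(\varphi_N,\varphi_c)<\varepsilon\}$ by $\exp\bigl(-N^2\,\mathrm{esssup}\{I_{c,N}(T)\,|\,d(\varphi_N,\varphi_c)<\varepsilon\}\bigr)$. It then controls this essential supremum as $N\to\infty$ and $\varepsilon\searrow0$. That step implicitly needs \eqref{Eq5.4} in a uniform form: it must hold along every sequence of paths whose empirical distributions approach $\varphi_c$, not only along $\mathbb{P}_c$-typical paths. This is what the trace-polynomial approximation of \cite[section 4]{Ueda:JOTP19} actually provides.

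\textbf{What your Jensen step buys.} Applying Jensen to the conditioned tilted measure replaces the essential supremum by the average of $I_{c,N}(T)$ over the event. The input you need is therefore weaker: almost sure convergence of $I_{c,N}(T)$ under the coupling $\mathbb{P}_c$, the uniform bound $|I_{c,N}(T)|\le C_T$, and dominated convergence. Both routes need the deterministic convergence $\mathbb{E}[\varphi_N(c)]\to\sigma_0^{\mathrm{frBM}}(c)$, which you correctly flag.

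\textbf{The identification of the limit is not needed.} You identify the limit with $\frac{1}{2}\Vert\xi\Vert_{L^2}^2=I(\varphi_c)$ via Theorem \ref{T3.12}(ii) and Lemma \ref{L3.4}. This is correct, but the lower bound does not require it. Since $\varphi_c^T(c)=\varphi_c(c)$, your limit is by definition $I(\varphi_c;c,T)$. The paper simply uses $I(\varphi_c;c,T)\le I(\varphi_c)$, which holds because $I$ is defined as a supremum. Your computation additionally shows that the supremum is attained at this particular $(c,T)$. That is the explicit value of the rate recorded in Theorem \ref{T5.6}.
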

\begin{proof} 
Choose a $T > 0$ such that $c \in \mathbb{C}\langle x,u\rangle_T$. We have 
\begin{align*}
\mathbb{P}(d(\varphi_N,\varphi_c) < \varepsilon) 
&= 
\mathbb{E}\left[\mathbf{1}_{(d(\varphi_B,\varphi_c)<\varepsilon)} \exp(N^2 I_{c,N}(T) - N^2 I_{c,N}(T))\right] \\
&\geq 
\mathbb{P}_{c,T,N}(d(\varphi_N,\varphi_c) < \varepsilon)\,\exp\left(-N^2 \mathrm{esssup}\big\{ I_{c,N}(T)\,\big|\, d(\varphi_N,\varphi_c) < \varepsilon\big\}\right), 
\end{align*}
and hence 
\begin{align*} 
\frac{1}{N^2} \log\mathbb{P}(d(\varphi_N,\varphi_c) < \varepsilon) 
&\geq 
\frac{1}{N^2} \log\mathbb{P}_c(d(\varphi_N,\varphi_c) < \varepsilon) - \mathrm{esssup}\big\{ I_{c,N}(T)\,\big|\, d(\varphi_N,\varphi_c) < \varepsilon\big\}. 
\end{align*}
Since $d(\varphi_N,\varphi_c) \to 0$ as $N\to\infty$ almost surely under $\mathbb{P}_c$ by Theorem \ref{T5.4}, the first term on the right-hand side of the above inequality converges to $0$ as $N\to\infty$. Consequently, we obtain 
\[ 
\varliminf_{N\to\infty} \frac{1}{N^2} \log\mathbb{P}(d(\varphi_N,\varphi_c) < \varepsilon) \geq 
- \varlimsup_{N\to\infty} \mathrm{esssup}\big\{ I_{c,N,T}\,\big|\, d(\varphi_N,\varphi_c) < \varepsilon\big\}. 
\] 
Then, by the convergence result \eqref{Eq5.4} we conclude  
\[
\lim_{\varepsilon\searrow0}\varliminf_{N\to\infty} \frac{1}{N^2}\log\mathbb{P}(d(\varphi_N,\varphi_c) < \varepsilon) \geq -I(\varphi_c,c,T) \geq -I(\varphi_c). 
\]
Hence we are done.  
\end{proof}

Since the large deviation upper bound with the same rate function $I$ was established in our previous work as mentioned in Section \ref{unitary_process}, we arrive at the following theorem:

\begin{theorem}\label{T5.6} 
For any $\varphi_c \in TS^\omega(C^*_R\langle x,u\rangle)$ with $c=c^* \in \mathbb{C}\langle x,u\rangle$, 
\[
\lim_{\varepsilon\searrow0} \begin{Bmatrix} \displaystyle \varlimsup_{N\to\infty} \\ \displaystyle \varliminf_{N\to\infty} \end{Bmatrix} \frac{1}{N^2}\log\mathbb{P}(d(\varphi_N,\varphi_c) < \varepsilon) = -\frac{1}{2}\int_0^\infty \sum_{i=1}^n \Vert E_t^{\varphi_c^t}(\mathfrak{D}_{t,i}c)\Vert_{L^2(\varphi_c)}^2\,dt
\]
holds. 
\end{theorem}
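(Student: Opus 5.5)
The plan is to sandwich the two limits between the lower bound of Proposition \ref{P5.5} and the upper bound from \cite[section 7]{Ueda:CJM21}, and then to evaluate the common value $I(\varphi_c)$ by Theorem \ref{T3.12}.

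\emph{Lower bound.} By Proposition \ref{P5.5},
\[
\lim_{\varepsilon\searrow0}\varliminf_{N\to\infty}\frac{1}{N^2}\log\mathbb{P}(d(\varphi_N,\varphi_c)<\varepsilon)\geq -I(\varphi_c).
\]
Since $\varepsilon\mapsto\mathbb{P}(d(\varphi_N,\varphi_c)<\varepsilon)$ is monotone, the limits in $\varepsilon$ of both the $\varlimsup$ and the $\varliminf$ exist in $[-\infty,0]$. So no separate existence argument is needed.

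\emph{Upper bound.} The large deviation upper bound with good rate function $I$ from \cite[section 7]{Ueda:CJM21} holds for closed sets. I will apply it to the closed ball $\{\psi:\ d(\psi,\varphi_c)\le\varepsilon\}$, which contains the open ball, and obtain
\[
\varlimsup_{N\to\infty}\frac{1}{N^2}\log\mathbb{P}(d(\varphi_N,\varphi_c)<\varepsilon)\le-\inf_{d(\psi,\varphi_c)\le\varepsilon}I(\psi).
\]
Then let $\varepsilon\searrow0$. Because $I$ is lower semicontinuous, with compact level sets, the standard argument gives
\[
\lim_{\varepsilon\searrow0}\inf_{d(\psi,\varphi_c)\le\varepsilon}I(\psi)=I(\varphi_c).
\]
Concretely: pick near-minimizers $\psi_\varepsilon\to\varphi_c$, with $I(\psi_\varepsilon)$ bounded if the limit is finite (otherwise there is nothing to prove), and apply lower semicontinuity. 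Together with the lower bound and the inequality $\varliminf\le\varlimsup$, both limits equal $-I(\varphi_c)$.

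\emph{Identifying $I(\varphi_c)$.} By Theorem \ref{T5.4}, $\varphi_c$ satisfies item (ii) of Theorem \ref{T3.12} with drift $\xi_i(t)=E_t^{\varphi_c^t}(\mathfrak{D}_{t,i}c)$. By Corollary \ref{C3.3}, this $\xi$ is self-adjoint because $c=c^*$. It is bounded, compactly supported (it vanishes for $t\ge T$ once $c\in\mathbb{C}\langle x,u\rangle_T$) and adapted. Hence it is square-integrable, and the last assertion of Theorem \ref{T3.12} gives
\[
I(\varphi_c)=\frac12\int_0^\infty\sum_i\Vert E_t^{\varphi_c^t}(\mathfrak{D}_{t,i}c)\Vert^2_{L^2(\varphi_c)}\,dt.
\]
In particular $I(\varphi_c)<\infty$. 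The identity displayed before Proposition \ref{P5.5} records exactly this.

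The main obstacle is the upper-bound step. It requires that the cited upper bound is a full (not weak) one, with $I$ lower semicontinuous on $TS^c(C^*_R\langle x,u\rangle)$. If only a weak upper bound on compact sets were available, I would use exponential tightness of $\varphi_N$ under $\mathbb{P}$, from \cite[subsection 5.1]{Ueda:JOTP19}, to upgrade it. Alternatively, since we only need small balls, I would intersect each ball with a large compact set $\mathcal{K}_L$ whose complement has probability at most $e^{-LN^2}$.
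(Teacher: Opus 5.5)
Your proposal is correct and follows the paper's route. The paper likewise combines the lower bound of Proposition \ref{P5.5} with the upper bound (with good rate function $I$) from \cite[section 7]{Ueda:CJM21}, and identifies $I(\varphi_c)$ through the last assertion of Theorem \ref{T3.12}, as recorded in the identity displayed just before Proposition \ref{P5.5}; you only make explicit the standard closed-ball and lower-semicontinuity step $\lim_{\varepsilon\searrow0}\inf_{d(\psi,\varphi_c)\le\varepsilon}I(\psi)=I(\varphi_c)$, which the paper leaves implicit.
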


\section{The (matrix) liberation process}\label{matrix_liberation_process} 

The original objective of this series of works is to investigate the matrix liberation process introduced in the first paper \cite{Ueda:JOTP19}. In this section, we apply the results obtained so far to the study of this process. To this end, we slightly modify our notation. The maps $\delta_{t,i}$, $\theta$, $\mathfrak{D}_{t,i}$, $\Pi^t$, and the rate function $I$ appearing earlier in this paper will be denoted by $^u\delta_{t,i}$, $^u\theta$, $^u\mathfrak{D}_{t,i}$, and $^u I$, respectively. That is, we add the prefix ``$u$" to the upper left of each symbol to designate objects associated with unitary processes.   

\subsection{Review on our previous works}

Let $C^*_R\langle x\rangle \subset C^*_R\langle x,v\rangle$ be the universal unital $C^*$-algebras generated by the indeterminates $x_{i,j}(t) = x_{ij}(t)^*$, $i = 1,\dots,n+1$, $j = 1,\dots$, $t \geq 0$, subject to the norm constraints $\Vert x_{ij}(t)\Vert \leq R$ (for a given constant $R > 0$), and the indeterminates $v_i(t)$, $i=1,\dots,n$, $t \geq 0$ satisfying $v_i(t)^* v_i(t) = 1 = v_i(t)v_i(t)^*$. The universal unital $*$-algebras $\mathbb{C}\langle x \rangle \subset \mathbb{C}\langle x,v\rangle$ generated by the same indeterminates are faithfully embedded into the $C^*$-algebras $C^*_R\langle x\rangle \subset C^*_R\langle x,v\rangle$ in a natural fashion. Note that these notations, and those that follow, differ slightly from our previous papers to streamline the description, as noted earlier. 

For each $i = 1,\dots,n$ and $t \geq 0$, there exists a unique derivation $\delta_{t,i} : \mathbb{C}\langle x\rangle \to \mathbb{C}\langle x,v\rangle\otimes\mathbb{C}\langle x,v\rangle$ defined by:  
\begin{equation*}  
\delta_{t,i} x_{i' j}(t') := \delta_{i,i'} \mathbf{1}_{[0,t']}(t)(x_{ij}(t')v_i(t'-t)\otimes v_i(t'-t)^* - v_i(t'-t)\otimes v_i(t'-t)x_{ij}(t')). 
\end{equation*}
Let $\theta$ be the flip-multiplication map from $\mathbb{C}\langle x,v\rangle\otimes\mathbb{C}\langle x,v\rangle$ to $\mathbb{C}\langle x,v\rangle$, defined by $\theta(a\otimes b) = ba$. We set $\mathfrak{D}_{t,i} := \theta\circ\delta_{t,i} : \mathbb{C}\langle x\rangle \to \mathbb{C}\langle x,v\rangle$.  

A tracial state $\tau$ on $C^*_R\langle x \rangle$ is \emph{continuous} if $(t_1,\dots,t_m) \mapsto \tau(x_{i_1 j_1}(t_1)\cdots x_{i_m,j_m}(t_m))$ is continuous for all $i_k, j_k$ and $m \in \mathbb{N}$. We denote the set of all continuous tracial states by $TS^c(C^*_R\langle x \rangle)$, which forms a complete metric space. See \cite[Section 1, Remark on Part I]{Ueda:CJM21}. Any $\tau \in TS^c(C^*_R\langle x \rangle)$ can be extended to a tracial state $\widetilde{\tau}$ on $C^*_R\langle x,v\rangle$ such that $v(t) = (v_1(t),\dots,v_n(t))$ is an $n$-dimensional, left-increment, free unitary Brownian motion, $*$-freely independent of $C^*_R\langle x\rangle$ under $\widetilde{\tau}$ as in subsection 2.5. 

For each $t \geq 0$, there is a unique unital $*$-homomorphism $\Pi^t : C^*_R\langle x,v\rangle \to C^*_R\langle x,v \rangle$ mapping each $x_{ij}(t')$ to: 
\[
x_{ij}^t(t') := v_i((t'-t)\vee0)x_{ij}(t\wedge t')v_i((t'-t)\vee0)^*, 
\] 
while keeping each $v_i(t)$ invarinat. For a given $\tau \in TS^c(C^*_R\langle x \rangle)$ we define $\tau^t \in TS^c(C^*_R\langle x \rangle)$ to be the restriction of $\widetilde{\tau}\circ\Pi^t$ to $C^*_R\langle x\rangle$. 

Let $\tau \in TS^c(C^*_R\langle x \rangle)$ be fixed for the time being. Let $L^2(\widetilde{\tau})$ be the $L^2$-space associated with $\widetilde{\tau}$, into which there is a canonical linear map from $C^*_R\langle x,v\rangle$ with dense image. For each $t \geq 0$, let $\mathbb{C}\langle x\rangle_t$ be the unital $*$-subalgebra of $\mathbb{C}\langle x,v\rangle$ generated by the variables $x_{ij}(s)$ with $s \leq t$. We denote by $C^*_R\langle x\rangle_t$ and $L^2(\tau)_t$ the closures of the images of $\mathbb{C}\langle x \rangle_t$ in $C^*_R\langle x\rangle$ and $L^2(\widetilde{\tau})$, respectively. As in Subsection 2.3, we also consider the $L^\infty$-spaces  $L^\infty(\tau)_t = L^2(\tau)_t \cap L^\infty(\tau) \subseteq L^\infty(\tau) \subset L^\infty(\widetilde{\tau})$.  

Let $X_N = (X_{N,ij})_{i=1,\dots,n+1, j = 1,\dots}$ be $N\times N$ deterministic self-adjoint matrices with $\Vert X_{N,i,j}\Vert \leq R$ ({\it n.b.}, we used the notation $\Xi(N) = (\Xi_i(N))_{i=1}^{n+1}$ and $\Xi_i(N) = (\xi_{ij}(N))_{j = 1,\dots}$ instead in the previous papers). The \emph{matrix liberation process} $X^\mathrm{lib}_N(t) := (X^\mathrm{lib}_{N,ij}(t))_{i=1,\dots,n+1, j  = 1,\dots}$ starting at $X_N$ is defined by $X^\mathrm{lib}_{N,ij}(t) := U_{N,i}(t)X_{N,ij}U_{N,i}(t)^*$, $i=1,\dots,n$, and $X^\mathrm{lib}_{N,n+1 j}(t) := X_{N,n+1 j}$, where $U_N(t) = (U_{N,1}(t),\dots, U_{N,n}(t))$ is an $n$-dimensional Brownian motion on the unitary group $\mathrm{U}(N)$ of rank $N$. For ease of notation, we set $U_{N,n+1}(t) \equiv I_N$ in what follows; accordingly, all $X^\mathrm{lib}_{N,ij}(t)$ admit a single representation: 
\[
X^\mathrm{lib}_{N,ij}(t) := U_{N,i}(t)X_{N,ij}U_{N,i}(t)^*
\]
for $i=1,\dots,n+1$. This process $X^\mathrm{lib}_N(t)$ induces a continuous tracial state $\tau_N := \tau_{X^\mathrm{lib}_N} \in TS^c(C^*_R\langle x\rangle)$ via the unique unital $*$-homomorphism from $C^*_R\langle x\rangle$ to $M_N(\mathbb{C})$ mapping each generator $x_{ij}(t)$ to $X^\mathrm{lib}_{N,ij}(t)$.  Assuming $X_N = (X_{N,ij})_{i=1,\dots,n+1, j \in \mathbb{N}}$ has a limit joint distribution $\sigma_0$ as $N\to\infty$, we previously established in \cite{Ueda:JOTP19} that the sequence $\mathbb{P}(\tau_N \in \,\cdot\,)$ satisfies a large deviation upper bound with speed $N^2$ and the good rate function $I = I_{\sigma_0}^\mathrm{lib} : TS^c(C^*_R\langle x\rangle) \to [0,+\infty]$ defined as follows.  

Let $\tau \in TS^c(C^*_R\langle x \rangle)$ be arbitrarily fixed for the time being. For each $t \geq 0$, let $E_t^{\widetilde{\tau}}$ denote the orthogonal projection from $L^2(\widetilde{\tau})$ onto $L^2(\tau)_t$, which induces a unique $\widetilde{\tau}$-preserving (normal) conditional expectation from $L^\infty(\widetilde{\tau})$ onto $L^\infty(\tau)_t$. Let $\sigma_0^\mathrm{lib} \in TS^c(C^*_R\langle x\rangle)$ be the distribution of the liberation process $x_{ij}(t) = w_i(t) x_{ij} w_i(t)^*$, where the variables $x_{ij}$ are distributed according to $\sigma_0$ and $w(t) = (w_1(t),\dots,w_n(t))$ form an $n$-dimensional, left-increment, free unitary Brownian motion (with $w_{n+1}(t) :\equiv 1$) that is $*$-freely independent of $\{x_{ij}\}$. With these notations we define:  
\begin{equation*}
I(\tau;a,T) := \tau^T(a) - \sigma_0^{\mathrm{lib}}(a) - \frac{1}{2} \int_0^T \sum_{i=1}^n \Vert E_t^{\widetilde{\tau}}(\Pi^t\mathfrak{D}_{t,i}a)\Vert_{L^2(\widetilde{\tau})}^2\,dt
\end{equation*}
for any $a = a^* \in \mathbb{C}\langle x\rangle$ and $T > 0$. We then define the rate function as:  
\begin{equation*}
I(\tau) := \sup\big\{ I(\tau;a,T);\, T > 0, \ a = a^* \in \mathbb{C}\langle x\rangle\big\}. 
\end{equation*}   
Consequently, the desired rate function is constructed in exactly the same manner as in the setting of unitary Brownian motion. In particular, if $\tau$ does not agree with $\sigma_0$ on the polynomials in $x_{ij}(0)$, then $I(\tau)$ must necessarily be $+\infty$.  

\subsection{Study on the rate function(s)} 

We will investigate the rate function for the matrix liberation process by embedding it into the framework of unitary Brownian motion. 

We first re-index the indeterminates $x=(x_j)_{j\geq 1}$ as $x_{ij}$ with $i=1,\dots,n+1$, $j = 1,\dots$. Let $a \in C^*_R\langle x \rangle \mapsto {^u a} \in C^*_R\langle x,u\rangle$  be a unital $*$-homomorphism mapping each $x_{ij}(t)$ to $^u x_{ij}(t) := u_i(t) x_{ij} u_i(t)^*$, where we set $u_{n+1}(t) :\equiv 1$. This mapping induces a continuous map $\varphi \in TS^c(C^*_R\langle x,u\rangle) \mapsto { _u \varphi} \in TS^c(C^*_R\langle x\rangle)$ vis pull-back, denoted by ${_u \varphi}(a) := \varphi({^u a})$ for every $a \in C^*_R\langle x\rangle$. Notably, the relation $\tau_{X^\mathrm{lib}_N} = {_u \varphi}_{(X_N,U_N)}$ holds. That is, the distribution of the matrix liberation process $X^\mathrm{lib}_N(t)$ is precisely the pull-back of the distribution of the unitary Brownian motion via the map $a \mapsto {^u a}$. This mapping $a \mapsto {^u a}$ uniquely extends to a unital $*$-homomorphism from $C^*_R\langle x,v\rangle$ to $C^*_R\langle x,u,\widetilde{u} \rangle$ by mapping each $v_i(t)$ to $\widetilde{u}_i(t)$. It is then clear that $\widetilde{_u \varphi} = {_u\widetilde{\varphi}}$ holds for any $\varphi \in TS^c(C^*_R\langle x,u\rangle)$. 

\begin{lemma}\label{L6.1}
For every $a \in \mathbb{C}\langle x\rangle$ and each $i=1,\dots,n$, we have ${^u\Pi^t}\circ{^u\mathfrak{D}_{t,i}}{^u a} = -\sqrt{-1}\,{^u\big(\Pi^t\circ\mathfrak{D}_{t,i}a\big)}$. 
\end{lemma}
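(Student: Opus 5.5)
The identity is linear in $a$, and both sides vanish on scalars. I therefore plan to prove it at the level of the tensor-valued derivations first, and only afterwards apply the flip-multiplication. The first step is to check that the two unital $*$-homomorphisms $C^*_R\langle x,v\rangle\to C^*_R\langle x,u,\widetilde{u}\rangle$ given by ${^u\Pi^t}\circ{^u(\,\cdot\,)}$ and ${^u(\,\cdot\,)}\circ\Pi^t$ coincide. On a generator $x_{ij}(t')$ the first gives ${^u\Pi^t}(u_i(t')x_{ij}u_i(t')^*) = \widetilde{u}_i((t'-t)\vee0)u_i(t\wedge t')\,x_{ij}\,u_i(t\wedge t')^*\widetilde{u}_i((t'-t)\vee0)^*$. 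The second gives ${^u}\big(v_i((t'-t)\vee0)x_{ij}(t\wedge t')v_i((t'-t)\vee0)^*\big)$, which is the same element, and both fix $v_i(s)\mapsto\widetilde{u}_i(s)$. Call this common homomorphism $\rho$.

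Next, I consider the two maps $\mathbb{C}\langle x\rangle\to C^*_R\langle x,u,\widetilde{u}\rangle\otimes C^*_R\langle x,u,\widetilde{u}\rangle$ defined by
\[
\Delta_1 := ({^u\Pi^t}\otimes{^u\Pi^t})\circ{^u\delta_{t,i}}\circ{^u(\,\cdot\,)}, \qquad \Delta_2 := -\sqrt{-1}\,({^u(\,\cdot\,)}\otimes{^u(\,\cdot\,)})\circ(\Pi^t\otimes\Pi^t)\circ\delta_{t,i}.
\]
Both are derivations into the bimodule $C^*_R\langle x,u,\widetilde{u}\rangle^{\otimes2}$, with both actions taken through $\rho$; this uses the first step. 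Hence they agree on all of $\mathbb{C}\langle x\rangle$ once they agree on the generators $x_{i'j}(t')$. For $i'=n+1$, or for $t>t'$, both sides vanish, because $u_{n+1}\equiv1$ and because of the indicator $\mathbf{1}_{[0,t']}(t)$.

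For $i'=i$ and $t\le t'$, the Leibniz rule applied to ${^u\delta_{t,i}}(u_i(t')x_{ij}u_i(t')^*)$ gives the expression
\[
\sqrt{-1}\,u_i(t')u_i(t)^*\otimes u_i(t)x_{ij}u_i(t')^* - \sqrt{-1}\,u_i(t')x_{ij}u_i(t)^*\otimes u_i(t)u_i(t')^* .
\]
Applying ${^u\Pi^t}$ turns $u_i(t')u_i(t)^*$ into $\widetilde{u}_i(t'-t)$. This must then be matched term by term with $-\sqrt{-1}$ times ${^u}$ of $(\Pi^t\otimes\Pi^t)\big(x_{ij}(t')v_i(t'-t)\otimes v_i(t'-t)^* - v_i(t'-t)\otimes v_i(t'-t)x_{ij}(t')\big)$, using $\Pi^t v_i = v_i$ and $\Pi^t x_{ij}(t') = v_i(t'-t)x_{ij}(t)v_i(t'-t)^*$.

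Finally, I apply $\theta$. It intertwines $\rho\otimes\rho$ with $\rho$, since $\theta(\rho(a)\otimes\rho(b))=\rho(ba)$. Consequently ${^u\Pi^t}\circ{^u\mathfrak{D}_{t,i}}\circ{^u} = \theta\circ\Delta_1$ and $-\sqrt{-1}\,{^u}\circ\Pi^t\circ\mathfrak{D}_{t,i} = \theta\circ\Delta_2$. Because $\Pi^t$ and ${^u\Pi^t}$ are homomorphisms, they commute with $\theta$ in the relevant sense.

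The main obstacle is the generator check in the third step. The tensor legs may fail to match exactly: the factors $\widetilde{u}_i(t'-t)$ and $\widetilde{u}_i(t'-t)^*$ can land in different legs on the two sides, and the identity may hold only after applying $\theta$. If that happens, I will drop the tensor-level comparison and compare the $\theta$-images directly. For that I would use the explicit word formula of Lemma \ref{L3.1} (its $\Pi^t$ version) together with the analogous expansion of $\mathfrak{D}_{t,i}$ on monomials in the $x_{ij}(t')$. Every occurrence of $x_{ij}(t')$ in a word $w$ contributes, after cyclic rotation, the terms coming from the adjacent occurrences of $u_i(t')$ and $u_i(t')^*$ in ${^u w}$. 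I would then verify the sign and the factor $-\sqrt{-1}$ occurrence by occurrence.
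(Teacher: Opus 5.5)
Your argument is correct, and it is organized differently from the paper's. The paper reduces to a word $w$ and computes both sides after the flip-multiplication, using the word expansion analogous to Lemma \ref{L3.1}. The result is \eqref{Eq6.1}: each side becomes $-\sqrt{-1}$ times a sum, over occurrences $w=w_1x_{ij}(t')w_2$ with $t\le t'$, of the commutators $[\widetilde{u}_i(t'-t)^*\,{^u\Pi^t}({^u w_2}\,{^u w_1})\,\widetilde{u}_i(t'-t),\,{^u x_{ij}}(t)]$. You instead prove, on $\mathbb{C}\langle x\rangle$, the stronger identity before $\theta$:
\[
({^u\Pi^t}\otimes{^u\Pi^t})\circ{^u\delta_{t,i}}\circ{^u(\,\cdot\,)} = -\sqrt{-1}\,({^u(\,\cdot\,)}\otimes{^u(\,\cdot\,)})\circ(\Pi^t\otimes\Pi^t)\circ\delta_{t,i}.
\]
Both sides are derivations for the same bimodule. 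This rests on the intertwining ${^u\Pi^t}\circ{^u(\,\cdot\,)}={^u(\,\cdot\,)}\circ\Pi^t$, which the paper also uses tacitly in \eqref{Eq6.1}. You only need it on $\mathbb{C}\langle x\rangle$, since ${^u\Pi^t}$ as defined does not act on $\widetilde{u}$. You then pass through $\theta$, which commutes with homomorphisms. This replaces occurrence-by-occurrence bookkeeping with a single generator check. The paper's computation, by contrast, delivers the explicit commutator formula \eqref{Eq6.1}, which is reused later (for \eqref{Eq6.5} lying in $\mathcal{H}^\mathrm{lib}(\tau)$, and in \eqref{Eq6.6}). Your route yields that formula only after one more Leibniz expansion.

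The generator check you flagged as the main obstacle in fact closes exactly at the tensor level, so the fallback is unnecessary. This requires the intended definition of $\delta_{t,i}$, however. The printed second term $v_i(t'-t)\otimes v_i(t'-t)x_{ij}(t')$, which you copied, is missing a star and should read $v_i(t'-t)\otimes v_i(t'-t)^*x_{ij}(t')$; this is what \eqref{Eq6.1} uses. With the printed version the lemma already fails for $a=x_{ij}(t')$ with $t<t'$: the left side is $0$, while the right side is $-\sqrt{-1}\,({^u x_{ij}}(t)-\widetilde{u}_i(t'-t)^2\,{^u x_{ij}}(t))$. With the star, set $v:=v_i(t'-t)$ and $\widetilde{u}:=\widetilde{u}_i(t'-t)$. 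Then $(\Pi^t\otimes\Pi^t)\delta_{t,i}x_{ij}(t')=vx_{ij}(t)\otimes v^*-v\otimes x_{ij}(t)v^*$, and hence
\[
-\sqrt{-1}\,({^u(\,\cdot\,)}\otimes{^u(\,\cdot\,)})(\Pi^t\otimes\Pi^t)\delta_{t,i}x_{ij}(t') = \sqrt{-1}\big(\widetilde{u}\otimes{^u x_{ij}}(t)\,\widetilde{u}^*-\widetilde{u}\,{^u x_{ij}}(t)\otimes\widetilde{u}^*\big).
\]
This is exactly what your expression for ${^u\delta_{t,i}}(u_i(t')x_{ij}u_i(t')^*)$ becomes under ${^u\Pi^t}\otimes{^u\Pi^t}$. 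That step uses ${^u\Pi^t}(u_i(t))=u_i(t)$, i.e.\ the convention $\widetilde{u}_i(0)=1$, which the paper uses throughout. Among the vanishing cases you should also list $i'\le n$ with $i'\neq i$; this is immediate from the Kronecker deltas.
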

\begin{proof}
Any $a \in \mathbb{C}\langle x\rangle$ is a linear combination of words, say $w = x_{i_1 j_1}(t_1)\cdots x_{i_m j_m}(t_m)$. Thus, we may assume that $a = w$ by linearity of the maps involved. By tedious calculations (see Lemma \ref{L3.1} too) we have
\begin{equation}\label{Eq6.1}
\begin{aligned}
^u(\Pi^t\circ\mathfrak{D}_{t,i}w) 
&= 
\sum_{\substack{w = w_1 x_{i,j}(t') w_2 \\ t \leq t'}} {^u\big(}[v_i(t'-t)^*\,\Pi^t(w_2 w_1)v_i(t'-t), x_{ij}(t)]\big) \\
&= 
\sum_{\substack{w = w_1 x_{i,j}(t') w_2 \\ t \leq t'}} [\widetilde{u}_i(t'-t)^*\,{^u\Pi^t}({^u w_2} {^u w_1})\widetilde{u}_i(t'-t), {^u x_{ij}}(t)]\big), \\
{^u\Pi^t}\circ{^u\mathfrak{D}_{t,i}}{^u w} 
&=
-\sqrt{-1}\sum_{\substack{w = w_1 x_{i,j}(t') w_2 \\ t \leq t'}} {^u\Pi^t}([(u_i(t')u_i(t)^*)^*\,{^u w_2}{^u w_1}(u_i(t')u_i(t)^*), {^u x_{ij}(t)}]) \\
&= 
-\sqrt{-1} \sum_{\substack{w = w_1 x_{i,j}(t') w_2 \\ t \leq t'}} [(\widetilde{u}_i(t'-t)^*\,{^u\Pi^t}({^u w_2}{^u w_1})\widetilde{u}_i(t'-t), {^u x_{ij}(t)}]. 
\end{aligned}
\end{equation} 
Hence we are done.   
\end{proof} 

The mapping $a \mapsto {^u a}$ induces an isometric linear map from $L^2({_u \widetilde{\varphi}})$ to $L^2(\widetilde{\varphi})$ by the definition of ${_u\varphi}$. This extension is still denoted by $\xi \mapsto {^u\xi}$. 

\begin{lemma}\label{L6.2} 
For every $a \in \mathbb{C}\langle x\rangle$ and every $t \geq 0$, 
\[
{^u\big(E_t^{_u\widetilde{\varphi}}(\Pi^t\circ\mathfrak{D}_{t,i}a)\big)} 
= 
{^u\big(E_\infty^{_u\widetilde{\varphi}}(\Pi^t\circ\mathfrak{D}_{t,i}a)\big)} 
=
E_\infty^{\widetilde{\varphi}}({^u \Pi^t}\circ{^u\mathfrak{D}}_{t,i}{^u a}) 
= 
E_t^{\varphi^t}({^u\mathfrak{D}}_{t,i}{^u a})
\]
holds and sits in the image of the unital $*$-subalgebra of $\mathbb{C}\langle x,u\rangle$ generated by ${^u x}_{ij} = u_i(s)x_{ij}u_i(s)^*$ with $s \leq t$ in $L^\infty(\varphi)$.   
\end{lemma}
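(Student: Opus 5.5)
Lemma \ref{L6.2} asserts a chain of three equalities plus a membership statement. I would handle the equalities from right to left, since the rightmost ones are already available, and then the first one and the membership together.

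\emph{Third equality.} The identity $E_\infty^{\widetilde{\varphi}}({^u\Pi^t}\circ{^u\mathfrak{D}}_{t,i}{^u a}) = E_t^{\varphi^t}({^u\mathfrak{D}}_{t,i}{^u a})$ is Lemma \ref{L3.2} applied to ${^u a} \in \mathbb{C}\langle x,u\rangle$.

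\emph{Second equality.} By Lemma \ref{L6.1}, ${^u\Pi^t}\circ{^u\mathfrak{D}}_{t,i}{^u a}$ equals ${^u(\Pi^t\circ\mathfrak{D}_{t,i}a)}$ up to a scalar factor $-\sqrt{-1}$. (I suspect the stated identity absorbs this constant into the normalization convention for $\mathfrak{D}$ versus ${^u\mathfrak{D}}$, and I would check this.) It then suffices to show that $E_\infty^{\widetilde{\varphi}}\circ{^u(\cdot)} = {^u(\cdot)}\circ E_\infty^{_u\widetilde{\varphi}}$ on the relevant elements.
\begin{itemize}
\item The map $\eta\mapsto{^u\eta}$ is an isometry $L^2({_u\widetilde{\varphi}})\to L^2(\widetilde{\varphi})$ intertwining the traces.
\item It sends $L^2({_u\varphi})=L^2({_u\widetilde{\varphi}})_\infty$ into $L^2(\varphi)$, because ${^u x_{ij}(s)}\in\mathbb{C}\langle x,u\rangle$.
\end{itemize}
The intertwining of the conditional expectations is not automatic, since the image of ${^u(\cdot)}$ is a proper subspace of $L^2(\widetilde{\varphi})$. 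I would instead use the structural description from Corollary \ref{C3.3}: $E_\infty^{\widetilde{\varphi}}$ applied to a polynomial in $\mathbb{C}\langle x,u\rangle_t$ and in the free increments $\widetilde{u}_i(s)$ is computed by freeness (e.iii) via the free-cumulant/moment formulas (\cite[Theorem 19]{MingoSpeicher:Book}). The resulting expressions depend only on the $*$-distribution of $\widetilde{u}$ and on $\widetilde{\varphi}$-moments of the coefficients.

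The elements ${^u\Pi^t}({^u w_2}{^u w_1})$ appearing in \eqref{Eq6.1} are words in $\widetilde{u}_i(\cdot)\,{^u x_{ij}}(s)\,\widetilde{u}_i(\cdot)^*$ with $s\le t$. The expectation procedure therefore produces a polynomial in the ${^u x_{ij}}(s)$, $s\le t$, with coefficients given by mixed moments. Running the identical free-probability computation in $(C^*_R\langle x,v\rangle,{_u\widetilde{\varphi}})$ yields the same polynomial with $x_{ij}(s)$ in place of ${^u x_{ij}}(s)$, and with the same scalar coefficients, because $\widetilde{_u\varphi}={_u\widetilde{\varphi}}$. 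Applying ${^u(\cdot)}$ to the latter gives the former, which proves the second equality. The same computation also establishes the membership claim: $E_\infty^{_u\widetilde{\varphi}}(\Pi^t\circ\mathfrak{D}_{t,i}a)$ lies in the image of $\mathbb{C}\langle x\rangle_t$, and hence its ${^u}$-image lies in the algebra generated by the ${^u x_{ij}}(s)$, $s\le t$.

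\emph{First equality.} Once we know $E_\infty^{_u\widetilde{\varphi}}(\Pi^t\circ\mathfrak{D}_{t,i}a)\in L^2({_u\varphi})_t$, the identity $E_t=E_t\circ E_\infty$ gives $E_t^{_u\widetilde{\varphi}}(\Pi^t\circ\mathfrak{D}_{t,i}a)=E_\infty^{_u\widetilde{\varphi}}(\Pi^t\circ\mathfrak{D}_{t,i}a)$.

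The main obstacle is the second equality, that is, justifying that conditional expectations commute with ${^u(\cdot)}$. An alternative would be a duality argument, testing against ${^u b}$ for $b\in\mathbb{C}\langle x\rangle_t$ as in the proof of Lemma \ref{L3.2}. That argument only identifies the projection onto the smaller subspace generated by the ${^u x}$'s, so the explicit freeness computation, which shows the output already lies in that subspace, is the essential input.
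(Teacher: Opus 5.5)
Your argument is correct up to the scalar issue discussed below. For the first and third equalities it is the paper's argument:
\begin{itemize}
\item freeness of $\{x_{ij}(s)\}_{s\le t}$ and $v$ under ${_u\widetilde{\varphi}}$ gives $E_\infty^{_u\widetilde{\varphi}}(\Pi^t\circ\mathfrak{D}_{t,i}a)\in\mathbb{C}\langle x\rangle_t$, hence the first equality and the membership;
\item Lemma \ref{L3.2} (restated in Corollary \ref{C3.3}, which the paper cites) gives the third equality.
\end{itemize}

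The genuine difference is the middle equality. The paper proves it purely by duality: it pairs both sides with ${^u b}$, $b\in C^*_R\langle x\rangle$, uses ${_u\widetilde{\varphi}}=\widetilde{\varphi}\circ{^u(\cdot)}$, and calls the equality immediate. Your objection to that route is accurate, and it applies to the paper's proof as written. The pairing only shows that the left side is the orthogonal projection of $E_\infty^{\widetilde{\varphi}}({^u\Pi^t}\circ{^u\mathfrak{D}}_{t,i}{^u a})$ onto $\overline{{^u}(C^*_R\langle x\rangle)}$. One still needs that this vector already lies in that subspace. That is a freeness fact: $\widetilde{u}$ is free from all of $C^*_R\langle x,u\rangle$, which contains every ${^u x_{ij}}(s)$. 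Your transport of the freeness computation supplies this and gives the middle equality and the membership together. The duality route is shorter but rests on this unstated step.

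You can also run your argument without cumulant bookkeeping:
\begin{itemize}
\item Since ${^u(\cdot)}$ intertwines ${_u\widetilde{\varphi}}$ with $\widetilde{\varphi}$, it sends centered elements to centered elements. Hence it sends reduced alternating words in $(\mathbb{C}\langle x\rangle,v)$ to reduced alternating words in $({^u}\mathbb{C}\langle x\rangle,\widetilde{u})$.
\item Both $E_\infty^{_u\widetilde{\varphi}}$ and $E_\infty^{\widetilde{\varphi}}$ annihilate every reduced word containing a $v$- (resp.\ $\widetilde{u}$-) letter, and fix $\mathbb{C}\langle x\rangle$ (resp.\ ${^u}\mathbb{C}\langle x\rangle$).
\item Therefore ${^u}\circ E_\infty^{_u\widetilde{\varphi}}=E_\infty^{\widetilde{\varphi}}\circ{^u}$ on all of $\mathbb{C}\langle x,v\rangle$.
\end{itemize}
Alternatively, use this only to place the right side inside $\overline{{^u}(C^*_R\langle x\rangle)}$, and then finish with the paper's pairing.

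On the scalar, your guess is wrong: no normalization convention absorbs it. Section \ref{matrix_liberation_process} declares ${^u\mathfrak{D}}_{t,i}$ to be literally the $\mathfrak{D}_{t,i}$ of Section \ref{unitary_process}, and Lemma \ref{L6.1} agrees with the explicit computation \eqref{Eq6.1}. What your argument actually yields is
\[
{^u(E_t^{_u\widetilde{\varphi}}(\Pi^t\circ\mathfrak{D}_{t,i}a))}={^u(E_\infty^{_u\widetilde{\varphi}}(\Pi^t\circ\mathfrak{D}_{t,i}a))}=\sqrt{-1}\,E_\infty^{\widetilde{\varphi}}({^u\Pi^t}\circ{^u\mathfrak{D}}_{t,i}{^u a})=\sqrt{-1}\,E_t^{\varphi^t}({^u\mathfrak{D}}_{t,i}{^u a}).
\]
So the printed middle equality holds only up to this unimodular factor. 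The membership claim is unaffected. The factor is harmless inside squared norms, but not when two such terms are multiplied.

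The paper's proof loses the factor at the step ${_u\widetilde{\varphi}}((\Pi^t\circ\mathfrak{D}_{t,i}a)b^*)=\widetilde{\varphi}(({^u\Pi^t}\circ{^u\mathfrak{D}}_{t,i}{^u a}){^u b}^*)$; by Lemma \ref{L6.1} its right-hand side should carry $\sqrt{-1}$. You should state the corrected identity rather than leave it as a suspicion.
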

\begin{proof}
Since $\Pi^t(\mathbb{C}\langle x\rangle)$ sits in the unital $*$-subalgebra generated by the $x_{ij}(s)$ with $s \leq t$ and the $v_i(t)$ and since those $x_{ij}(s)$ and the $v_i(t)$ are $*$-freely independent, it follows by the idea of \cite[Theorem 19 and Exercises 19,20 in section 2.5]{MingoSpeicher:Book} (see also \cite[Lemmas 4.3, 4.4]{Ueda:JOTP19}) that $E_t^{_u\widetilde{\varphi}}(\Pi^t\circ\mathfrak{D}_{t,i}a) = E_\infty^{_u\widetilde{\varphi}}(\Pi^t\circ\mathfrak{D}_{t,i}a)$ holds and belongs to $\mathbb{C}\langle x\rangle_t$ inside $L^\infty({_u\varphi})$. Note that the image of $\mathbb{C}\langle x\rangle_t$ under the mapping $a \mapsto {^u a}$ is the unital $*$-subalgebra of $\mathbb{C}\langle x,u\rangle$ generated by ${^u x}_{ij}(s)$ with $s \leq t$. Hence it suffices to prove only the second equality thanks Corollary \ref{C3.3}. 

For any $b \in C^*_R\langle x \rangle$ we have 
\begin{align*} 
\left({^u\big(}E_\infty^{_u\widetilde{\varphi}}(\Pi^t\circ\mathfrak{D}_{t,i}a)\big)\,\Big|\,{^u b}\right)_{L^2(\widetilde{\varphi})} 
&=
\left(E_\infty^{_u\widetilde{\varphi}}(\Pi^t\circ\mathfrak{D}_{t,i}a)\,\Big|\,b\right)_{L^2(_u\widetilde{\varphi})} \\
&= 
\left(\Pi^t\circ\mathfrak{D}_{t,i}a\,\Big|\,b\right)_{L^2(_u\widetilde{\varphi})} \\
&= 
{_u\widetilde{\varphi}}((\pi^t\circ\mathfrak{D}_{t,i}a)\,b^*) \\
&= 
\widetilde{\varphi}(({^u\Pi^t}\circ{^u\mathfrak{D}_{t,i}}{^u a})\,{^u b}^*) \\
&=
\left({^u\Pi^t}\circ{^u\mathfrak{D}_{t,i}}{^u a}\,\Big|\,{^u b}\right)_{L^2(\widetilde{\varphi})} \\
&= 
\left(E_\infty^{\widetilde{\varphi}}\big({^u\Pi^t}\circ{^u\mathfrak{D}_{t,i}}{^u a}\big)\,\Big|\,{^u b}\right)_{L^2(\widetilde{\varphi})}, 
\end{align*} 
where the last equality follows from ${^u b}$ is in $C^*_R\langle x,u\rangle$. This immediately implies the desired equality. 
\end{proof} 

By combining the above lemma with the trivial identity 
\begin{equation}\label{Eq6.2}
{_u\varphi}^t(a) = {_u\widetilde{\varphi}}(\Pi^t(a)) = \widetilde{\varphi}({^u\Pi^t}({^u a})) = \varphi^t({^u a}), \qquad a \in C^*_R\langle x \rangle, 
\end{equation}
we observe that 
\begin{equation}\label{Eq6.3}
I({_u \varphi};a,T) = {^u I}(\varphi;{^u a},T) 
\end{equation}
holds for every $\varphi \in TS^c(C^*_R\langle x,u\rangle)$, $a = a^* \in \mathbb{C}\langle x\rangle$ and $T > 0$. Consequently, we conclude that 
\begin{equation}\label{Eq6.4}
I(\tau) \leq \inf\{ {^u I}(\varphi); \varphi \in TS^c(C^*_R\langle x,u\rangle)\ \text{with ${_u\varphi} = \tau$}\}
\end{equation}
for every $\tau \in TS^c(C^*_R\langle x\rangle)$, where the right-hand side is defined to be $+\infty$ if no such $\varphi$ exists.  

We then establish a counterpart of \cite[Theorem 5.2(2)]{BianeCapitaineGuionnet:InventMath03} for the liberation process, analogous to Theorem \ref{T3.12}. To this end, we formulate a suitable Hilbert space corresponding to $\mathcal{H}(\varphi)$ from Section 3 and  prove a counterpart to Lemma \ref{L3.10} within this framework. 

As in Section 3, let $\mathcal{H}(\tau)$ be the closure (in $L^2([0,+\infty);L^2(\tau)^n)$) of the space of all bounded, compactly supported, piecewise continuous functions $\xi : [0,+\infty) \mapsto L^2(\tau)^n$ such that $\xi(t) = (\xi_1(t),\dots,\xi_n(t)) \in (L^2(\tau)_t)^n$ for each $t \geq 0$. It is clear that Lemma \ref{L3.8} remains valid in this setting, and we use it hereafter without further comment. We then define the closed subspace $\mathcal{H}^\mathrm{lib}(\tau)$ as the closure of the linear span of functions of the form: 
\[
t \mapsto ([\xi_1(t),x_{1 j_1}(t)],\dots,[\xi_n(t),x_{n j_n}(t)]), 
\]
where $\xi(t) = (\xi_1(t),\dots,\xi_n(t)) \in \mathcal{H}(\tau)_T = \mathbf{1}_{[0,T]}\cdot\mathcal{H}(\tau)$ for some $T > 0$. By Lemma \ref{L6.1} and the identities \eqref{Eq6.1}, the functions 
\begin{equation}\label{Eq6.5}
t \mapsto (-\sqrt{-1}E_t^{\widetilde{\tau}}(\Pi^t\circ\mathfrak{D}_{t,1}a),\dots,-\sqrt{-1}E_t^{\widetilde{\tau}}(\Pi^t\circ\mathfrak{D}_{t,n}a))
\end{equation}
 for $a = a^* \in \mathbb{C}\langle x \rangle$ belong to $\mathcal{H}^\mathrm{lib}(\tau)^\mathrm{sa} = \{ \xi = \xi^* \in \mathcal{H}^\mathrm{lib}(\tau)\}$. However, verifying that these elements form a dense linear submanifold of $\mathcal{H}^\mathrm{lib}(\tau)^\mathrm{sa}$ requires a argument similar to Lemma \ref{L3.10}, the key to which is Lemma \ref{L3.8}. Below, we provide the proof of the counterpart of Lemma \ref{L3.8} in this context, as the original proof was left to the reader.  
 
 \begin{lemma}\label{L6.3} For every $s > r \geq 0$, every $a \in \mathbb{C}\langle x\rangle_r$ and all $i,i' = 1,\dots,n$, the following identity holds:  
 \[
 E_\infty^{\widetilde{\tau}}(\Pi^t\circ\mathfrak{D}_{t,i}((e^s \mathring{x}_{i' j}(s) - e^r \mathring{x}_{i' j}(r))a)) = \delta_{i,i'} \mathbf{1}_{(r,s]}(t)\,[a, e^t x_{ij}(t)], \qquad t \geq 0, 
 \]
 with setting $\mathring{x}_{ij}(t) := x_{ij}(t) - \tau(x_{ij}(t))1$.  
\end{lemma}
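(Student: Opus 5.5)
The derivation $\mathfrak{D}_{t,i}$ kills every letter $x_{i'j'}(t')$ with $t'<t$. In the left-hand side, $a$ lies in $\mathbb{C}\langle x\rangle_r$ and $\tau(x_{i'j}(s))1$, $\tau(x_{i'j}(r))1$ are scalars. So for $t>r$ only the first letter $x_{i'j}(s)$ contributes, and only when $t\le s$; for $t\le r$ all letters can contribute. The plan is therefore to expand via the word formula \eqref{Eq6.1}, which gives $\Pi^t\circ\mathfrak{D}_{t,i}$ on a word as a sum over occurrences of letters $x_{ij}(t')$ with $t'\ge t$, each term of the form $[v_i(t'-t)^*\Pi^t(w_2w_1)v_i(t'-t),x_{ij}(t)]$. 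I would then apply $E_\infty^{\widetilde\tau}$ using the freeness of $v$ from $C^*_R\langle x\rangle$ (item (e.iii) analogue), exactly as in the proofs of Lemmas \ref{L4.5} and \ref{L4.8}.

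\emph{Case $t\in(r,s]$.} Only the occurrence $x_{i'j}(s)$ contributes, so we need $i=i'$. Cyclically rotating the word $w=x_{ij}(s)a$ at that occurrence gives $w_1=1$ and $w_2=a$. Since $a\in\mathbb{C}\langle x\rangle_r$ and $r<t$, we have $\Pi^t(a)=a$, and the contribution is $e^s[v_i(s-t)^*av_i(s-t),x_{ij}(t)]$. To compute $E_\infty^{\widetilde\tau}(v^*av)$ for a unitary $v$ free from $a$, I would use the standard free moment formula $E(v^*av)=|\tau(v)|^2a+(1-|\tau(v)|^2)\tau(a)1$. Here $\tau(v_i(s-t))=e^{-(s-t)/2}$ (Lemma \ref{L4.4} rescaled), so $|\tau(v)|^2=e^{-(s-t)}$. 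Commuting with $x_{ij}(t)$ kills the scalar term, leaving $e^s e^{-(s-t)}[a,x_{ij}(t)]=[a,e^tx_{ij}(t)]$. This is precisely why the prefactor $e^s$ and the centring appear.

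\emph{Case $t\le r$.} Here every occurrence contributes, including those inside $a$, and $\Pi^t$ acts nontrivially on $a$. I would show that the two terms $e^s\mathring{x}(s)a$ and $e^r\mathring{x}(r)a$ give identical conditional expectations, so their difference vanishes. The cleanest route is not to compute termwise but to use the martingale property: $t'\mapsto e^{t'}\mathring{x}^t_{ij}(t')$ with $t'\ge t$, conditioned on $L^2(\tau)_t$ together with the other free data, should be a martingale. Indeed $v(\cdot)$ has free multiplicative increments, so $E[v_i(t'-t)^*Xv_i(t'-t)]=e^{-(t'-t)}X+(1-e^{-(t'-t)})\tau(X)$, and for $X$ centred this is a martingale after multiplying by $e^{t'}$.

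I expect the main obstacle to be making this precise. The expectation must be taken of the whole derivative expression, in which $\mathfrak{D}_{t,i}$ hits letters of $a$ while $\mathring{x}(s)$ stays as $x^t(s)=v_i(s-t)x(t)v_i(s-t)^*$. The argument should use a conditional expectation onto the algebra generated by $L^\infty(\tau)_t$ and $v_i(u)$ for $u\le r-t$, i.e.\ onto everything except the increment $v_i(s-t)v_i(r-t)^*$. By left-increment freeness this increment is free from that algebra, and it replaces $e^s\mathring{x}^t(s)$ by $e^r\mathring{x}^t(r)$ inside all terms. Terms where $\mathfrak{D}_{t,i}$ hits the letter $\mathring{x}(s)$ itself should be handled in the same way, since that letter is conjugated by $v_i(s-t)$ and the same substitution applies. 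Using the tower property with $E_\infty^{\widetilde\tau}$ then gives $0$ for $t\le r$. Finally, for $t>s$ nothing survives, which completes the identity.
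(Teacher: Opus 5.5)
Your proposal is correct and follows essentially the same route as the paper. You expand via \eqref{Eq6.1}; for $t\in(r,s]$ you use $E_\infty^{\widetilde{\tau}}(v^*av)=e^{-(s-t)}a+(1-e^{-(s-t)})\tau(a)1$ with $v=v_i(s-t)$, letting the commutator kill the scalar; and for $t\le r$ you use freeness of the increment $V=v_{i'}(s-t)v_{i'}(r-t)^*$ to show that each $e^s$-weighted term matches its $e^r$ counterpart. Your intermediate conditional expectation is just a repackaging of the paper's inner-product computation against $b\in\mathbb{C}\langle x\rangle$.

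Two small points. First, take the intermediate algebra to be generated by $C^*_R\langle x\rangle$, $v_{i'}(u)$ for $u\le r-t$, and all $v_{i''}(u)$ with $i''\neq i'$; with your $L^\infty(\tau)_t$ the tower property with $E_\infty^{\widetilde{\tau}}$ does not hold literally, and with only $v_i$ the case $i\neq i'$ is not covered. Second, as in the paper, writing $\Pi^t(\mathring{x}_{i'j}(s))=V\,\Pi^t(\mathring{x}_{i'j}(r))\,V^*$ with the latter centred tacitly assumes that $t\mapsto\tau(x_{i'j}(t))$ is constant. This does hold whenever $I(\tau)<+\infty$.
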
 
\begin{proof}
By linearity we may assume that $a = w$ as in the proof of Lemma \ref{L6.1}. Let us compute $E_\infty^{\widetilde{\tau}}(\Pi^t\circ\mathfrak{D}_{t,i}(e^s\mathring{x}_{i' j}(s) w))$ for every $s \geq r$. 

By the calculation \eqref{Eq6.1} we have 
\begin{equation}\label{Eq6.6}
\begin{aligned}
E_\infty^{\widetilde{\tau}}(\Pi^t\circ\mathfrak{D}_{t,i}(e^s\mathring{x}_{i' j}(s) w)) 
&= 
\delta_{i,i'} \mathbf{1}_{[0,s]}(t)\,e^s[E_\infty^{\widetilde{\tau}}(v_i(s-t)^* \Pi^t(w) v_i(s-t)),x_{ij}(t)] \\
&\quad + \sum_{\substack{ w=w_1 x_{ij'}(t')w_2 \\ t \leq t' (\leq r)}} e^s[E_\infty^{\widetilde{\tau}}(v_i(t' - t)^* \Pi^t(w_2 \mathring{x}_{i' j}(s) w_1) v_i(t'-t)), x_{ij'}(t)],   
\end{aligned}
\end{equation}
where we used $[\,\cdot\,,\mathring{x}_{ij}(t)] = [\,\cdot\,,x_{ij}(t)]$. 

When $r \leq t \leq s$, we have $\Pi^t(w) = w$ and hence, for any $b \in \mathbb{C}\langle x \rangle$,  
\begin{align*}
&(E_\infty^{\widetilde{\tau}}(v_i(s-t)^* \Pi^t(w) v_i(s-t))\,|\,b)_{L^2(\widetilde{\tau})} \\
&= 
(v_i(s-t)^* w v_i(s-t)\,|\,b)_{L^2(\widetilde{\tau})} \\
&=  
\widetilde{\tau}(v_i(s-t)^* w v_i(s-t)b^*) \\
&= 
\widetilde{\tau}(w)\widetilde{\tau}(b^*) + |\widetilde{\tau}(v_i(s-t))|^2 \widetilde{\tau}(wb^*) - |\widetilde{\tau}(v_i(s-t))|^2 \widetilde{\tau}(w)\widetilde{\tau}(b^*) \\
&= 
\widetilde{\tau}((\widetilde{\tau}(w)1 + e^{-(s-t)}\mathring{w})b^* \\
&=
((1-e^{-(s-t)})\widetilde{\tau}(w)1 + e^{-(s-t)}w\,|\,b)_{L^2(\widetilde{\tau})}, 
\end{align*}  
where we used the free independence of $v_i(s-t)$ against the others, and $\widetilde{\tau}(v_i(s-t)) = e^{-(s-t)/2}$ (see \cite[Lemma 1]{Biane:Fields97}). Thus, $E_\infty^{\widetilde{\tau}}(v_i(s-t)^* \Pi^t(w) v_i(s-t)) = (1-e^{-(s-t)})\tau(w)1 + e^{-(s-t)}w$ holds in this case. When $t < r$, for any $b \in \mathbb{C}\langle x \rangle$ we similarly have 
 \begin{align*} 
&(E_\infty^{\widetilde{\tau}}(v_i(s-t)^* \Pi^t(w) v_i(s-t))\,|\,b)_{L^2(\widetilde{\tau})} \\
&= 
e^{-(s-r)}\widetilde{\tau}(v_i(r-t)^*\Pi^t(w)v_i(r-t) b^*) + \widetilde{\tau}(\Pi^t(w))\widetilde{\tau}(b^*) - e^{-(s-r)}\widetilde{\tau}(\Pi^t(w))\widetilde{\tau}(b^*) \\
&= 
((1-e^{-(s-r)})\widetilde{\tau}(\Pi^t(w))1 + e^{-(s-r)} E_\infty^{\widetilde{\tau}}(v_i(r-t)^*\Pi^t(w)v_i(r-t))\,|\,b)_{L^2(\widetilde{\tau})},   
\end{align*}
where we used the left free-increment property of $t \mapsto v_i(t)$ under $\widetilde{\tau}$ together with trace property as just before. Hence we have obtained that $E_\infty^{\widetilde{\tau}}(v_i(s-t)^* \Pi^t(w) v_i(s-t)) = (1-e^{-(s-r)})\widetilde{\tau}(\Pi^t(w))1 + e^{-(s-r)} E_\infty^{\widetilde{\tau}}(v_i(r-t)^*\Pi^t(w)v_i(r-t))$. Therefore, the first term of the right-hand side of the identity \eqref{Eq6.6} becomes 
\begin{equation}\label{Eq6.7}
\delta_{i,i'}\mathbf{1}_{[0,r)}(t)[e^r E_\infty^{\widetilde{\tau}}(v_i(r-t)^*\Pi^t(w)v_i(r-t)),x_{ij}(t)] + 
\delta_{i,i'}\mathbf{1}_{[r,s]}(t)[w,e^t x_{ij}(t)].  
\end{equation}

While paying attention to $t \leq t' \leq r \leq s$, for any $b \in \mathbb{C}\langle x\rangle$ we compute 
\begin{align*}
&(E_\infty^{\widetilde{\tau}}(v_i(t' - t)^* \Pi^t(w_2 \mathring{x}_{i' j}(s) w_1) v_i(t'-t))\,|\,b)_{L^2(\widetilde{\tau})} \\
&= 
\widetilde{\tau}(v_i(t' - t)^* \Pi^t(w_2) (v_i(s-t)v_i(r-t)^*)\Pi^t(\mathring{x}_{i' j}(r))(v_i(s-t)v_i(r-t)^*)^* \Pi^t(w_1) v_i(t'-t) b^*) \\
&= 
(e^{-(s-r)} E_\infty^{\widetilde{\tau}}(v_i(t' - t)^* \Pi^t(w_2 \mathring{x}_{i' j}(r) w_1) v_i(t'-t))\,|\,b)_{L^2(\widetilde{\tau})} 
\end{align*}   
as before. Hence we have obtained that 
\[
E_\infty^{\widetilde{\tau}}(v_i(t' - t)^* \Pi^t(w_2 \mathring{x}_{i' j}(s) w_1) v_i(t'-t)) = e^{-(s-r)} E_\infty^{\widetilde{\tau}}(v_i(t' - t)^* \Pi^t(w_2 \mathring{x}_{i' j}(r) w_1) v_i(t'-t)), 
\]
and hence the second term of the right-hand side of equation \eqref{Eq6.6} becomes 
\begin{equation}\label{Eq6.8}
\sum_{\substack{w=w_1 x_{ij'}(t')w_2 \\ t \leqq t' (\leqq r)}} [E_\infty^{\widetilde{\tau}}(v_i(t' - t)^* \Pi^t(w_2 (e^r \mathring{x}_{i' j}(r)) w_1) v_i(t'-t)),x_{ij'}(t)]
\end{equation}
Since both the first term of \eqref{Eq6.7} and \eqref{Eq6.8} are independent of $s$ within $s \geq r$, we have 
\[
E_\infty^{\widetilde{\tau}}(\Pi^t\circ\mathfrak{D}_{t,i}(e^s\mathring{x}_{i' j}(s) w)) - E_\infty^{\widetilde{\tau}}(\Pi^t\circ\mathfrak{D}_{t,i}(e^r\mathring{x}_{i' j}(r) w)) = \delta_{i,i'}\mathbf{1}_{(r,s]}(t)[w,e^t x_{ij}(t)]. 
\]
Hence we are done. 
\end{proof}

With this lemma, the essentially same discussion as the proof of Lemma \ref{L3.10} shows that all the functions of the form \eqref{Eq6.5} form a dense linear submanifold of $\mathcal{H}(\tau)$. Then, with \cite[Lemma 5.3]{Ueda:JOTP19} we can prove the next counterpart of Theorem \ref{T3.12} in the same way as there. 

\begin{theorem}\label{T6.4} Assume that $I(\tau) < +\infty$. Then there exists a unique (up to almost everywhere equivalence) self-adjoint function $\xi : [0,+\infty) \to L^2(\tau)^n$ that belongs to $\mathcal{H}^\mathrm{lib}(\tau)$ such that the following properties hold: 
\begin{itemize}
\item[(i)] $\displaystyle I(\varphi) = \frac{1}{2} \Vert\xi\Vert_{L^2}^2$, where $\Vert\,\cdot\,\Vert_{L^2}$ denotes the $L^2$-norm for functions over $[0,+\infty)$ taking its values in $L^2(\tau)^n$.   
\item[(ii)] For every $T > 0$ and any $a \in \mathbb{C}\langle x\rangle$, the following identity holds: 
\[
\tau^T(a) = \sigma_0^\mathrm{lib}(a) + \int_0^T \sum_{i=1}^n(E_t^{\widetilde{\tau}}(\Pi^t\circ\mathfrak{D}_{t,i}a)\,|\,\xi_i(t))_{L^2(\tau)}\,dt.
\]
\end{itemize}

Moreover, if there exists a square-integrable, self-adjoint function $\xi : [0,+\infty) \to L^2(\tau)^n$ that belongs to $\mathcal{H}^\mathrm{lib}(\tau)$ and satisfies the integral expression in item (ii) above, then item (i) above necessarily holds with this $\xi$.   
\end{theorem}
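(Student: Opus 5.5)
The plan is to repeat the Riesz-representation argument that precedes Theorem \ref{T3.12}, now on the real Hilbert space $\mathcal{H}^\mathrm{lib}(\tau)^\mathrm{sa}$. For $a=a^*\in\mathbb{C}\langle x\rangle$ and $T>0$ set
\[
\eta_{a,T}(t):=\mathbf{1}_{[0,T)}(t)\,\big(E_t^{\widetilde{\tau}}(\Pi^t\circ\mathfrak{D}_{t,i}a)\big)_{i=1}^n .
\]
By Lemma \ref{L6.1}, the identities \eqref{Eq6.1} and Lemma \ref{L6.2}, the function $-\sqrt{-1}\,\eta_{a,T}$ lies in $\mathcal{H}^\mathrm{lib}(\tau)^\mathrm{sa}$. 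Its $L^2$-norm is the quantity appearing in $I(\tau;a,T)$.

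\textbf{Step 1 (bounded functional).} The argument of \cite[Lemma 5.3]{Ueda:JOTP19} (Lemma \ref{L3.11} in the unitary setting) gives, when $I(\tau)<+\infty$,
\[
(\tau^T(a)-\sigma_0^\mathrm{lib}(a))^2\le 2I(\tau)\,\Vert\eta_{a,T}\Vert_{L^2}^2 .
\]
It follows that $\eta_{a,T}\mapsto \tau^T(a)-\sigma_0^\mathrm{lib}(a)$ is a well-defined, bounded real-linear functional on the linear manifold of such functions. To conclude it extends to all of $\mathbf{1}_{[0,T)}\mathcal{H}^\mathrm{lib}(\tau)^\mathrm{sa}$, I need density, which I discuss below. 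Given density, Riesz representation yields a unique self-adjoint $\xi_T\in\mathbf{1}_{[0,T)}\mathcal{H}^\mathrm{lib}(\tau)$ with
\[
\tau^T(a)-\sigma_0^\mathrm{lib}(a)=\int_0^T\sum_{i}(E_t^{\widetilde{\tau}}(\Pi^t\mathfrak{D}_{t,i}a)\,|\,\xi_{T,i}(t))\,dt .
\]
The factor $-\sqrt{-1}$ can be absorbed into $\xi_T$ while keeping self-adjointness, because the pairing is real for self-adjoint $a$.

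\textbf{Step 2 (consistency in $T$).} Let $T_1<T_2$ and take $a\in\mathbb{C}\langle x\rangle_T$ with $T<T_1$. Then $\tau^{T_1}(a)=\tau^{T_2}(a)=\tau(a)$, and $\mathfrak{D}_{t,i}a=0$ for $t>T$. Using density again, $\xi_{T_1}=\xi_{T_2}$ a.e.\ on $[0,T_1)$. Gluing these and applying the analogue of Lemma \ref{L3.9}(1) produces a single $\xi$ with (ii) for self-adjoint $a$. General $a$ follows by splitting into real and imaginary parts, since both sides are complex-linear.

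\textbf{Step 3 (item (i) and the converse).} As in the proof of Theorem \ref{T3.12}, complete the square:
\[
I(\tau;a,T)=\tfrac12\Vert\mathbf{1}_{[0,T)}\xi\Vert^2-\tfrac12\Vert \eta'_{a,T}-\mathbf{1}_{[0,T)}\xi\Vert^2 ,
\]
with $\eta'_{a,T}$ the suitably rotated $\eta_{a,T}$. Taking the supremum over $a$ via density and then over $T$ gives (i). Uniqueness follows from density in $\mathcal{H}^\mathrm{lib}(\tau)$. For the converse, I run the same computation starting from (ii). This only requires that the given $\xi$ lie in $\mathcal{H}^\mathrm{lib}(\tau)$, which is assumed.

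\textbf{Main obstacle: density.} The crux is to show that the functions \eqref{Eq6.5} are dense in $\mathcal{H}^\mathrm{lib}(\tau)_T^\mathrm{sa}$, mirroring Lemma \ref{L3.10}. By the definition of $\mathcal{H}^\mathrm{lib}$ and the totality statement of Lemma \ref{L3.9}(2), it suffices to approximate
\[
t\mapsto\big(\mathbf{1}_{(s_i,t_i]}(t)\,[a_i,x_{ij_i}(t)]\big)_i,\qquad a_i\in\mathbb{C}\langle x\rangle_{s_i}.
\]
Following Lemma \ref{L3.10}, I take
\[
a:=\sum_i\big(e^{r_{i,2}}\mathring{x}_{ij_i}(r_{i,2})-e^{r_{i,1}}\mathring{x}_{ij_i}(r_{i,1})\big)\,e^{-r_{i,1}}a_i .
\]
By Lemma \ref{L6.3}, its image is $\mathbf{1}_{(r_{i,1},r_{i,2}]}(t)\,e^{t-r_{i,1}}[a_i,x_{ij_i}(t)]$. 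The error is controlled by $e^{(r_{i,2}-r_{i,1})}-1$ times a bound uniform in $t$. Subdividing $(s_i,t_i]$ finely then gives the approximation. Real/self-adjoint versions follow because all maps are $*$-preserving; I would take real and imaginary combinations, using $a\mapsto \sqrt{-1}(\cdot)$ if needed.
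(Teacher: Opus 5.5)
Your overall route is the same as the paper's. You prove density of the functions \eqref{Eq6.5} from Lemma~\ref{L6.3} exactly as in Lemma~\ref{L3.10}; your choice of $a$ and the $e^{r_{i,2}-r_{i,1}}-1$ error bound are correct, and even simpler than in Lemma~\ref{L3.10}, since the target already contains $x_{ij_i}(t)$. You then use the analogue of Lemma~\ref{L3.11}, Riesz representation, consistency in $T$, and completing the square as in Theorem~\ref{T3.12}. One step, however, is false as written: the claim that the factor $-\sqrt{-1}$ ``can be absorbed into $\xi_T$ while keeping self-adjointness, because the pairing is real for self-adjoint $a$.''

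For $a=a^*$ the function $\eta_{a,T}$ is skew-adjoint. This follows from \eqref{Eq6.1}, or from Lemma~\ref{L6.1}, since ${^u\Pi^t}$ and ${^u\mathfrak{D}}_{t,i}$ are $*$-preserving. That is exactly why it is $-\sqrt{-1}\,\eta_{a,T}$, and not $\eta_{a,T}$, that lies in $\mathcal{H}^\mathrm{lib}(\tau)^\mathrm{sa}$. For the same reason, your remark that ``all maps are $*$-preserving'' should read: $a\mapsto -\sqrt{-1}\,E_t^{\widetilde{\tau}}(\Pi^t\circ\mathfrak{D}_{t,i}a)$ is $*$-preserving.

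For skew-adjoint $\eta$ and self-adjoint $\xi$ one has $(\eta|\xi)_{L^2(\tau)}=\tau(\eta\xi)=-\overline{\tau(\eta\xi)}$. So with self-adjoint $\xi$, the right-hand side of (ii) is purely imaginary for $a=a^*$, while $\tau^T(a)-\sigma_0^\mathrm{lib}(a)$ is real. What Riesz actually gives you is a self-adjoint $\zeta_T\in\mathbf{1}_{[0,T)}\mathcal{H}^\mathrm{lib}(\tau)$ with
\[
\tau^T(a)-\sigma_0^\mathrm{lib}(a)=(-\sqrt{-1}\,\eta_{a,T}|\zeta_T)_{L^2}=(\eta_{a,T}|\sqrt{-1}\,\zeta_T)_{L^2}.
\]
Absorbing the factor therefore produces the skew-adjoint function $\sqrt{-1}\,\zeta_T$.

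In fact, (ii) exactly as printed, with self-adjoint $\xi$, forces both sides to vanish for every $a=a^*$. By your density result this gives $\xi=0$, hence $I(\tau)=0$. As a sanity check, for $\tau=\tau_c$ the identity before Corollary~\ref{C6.5}, once the factor from Lemma~\ref{L6.1} is tracked, holds in the printed form with the skew-adjoint $\xi_i(t)=E_t^{\widetilde{\tau}}(\Pi^t\circ\mathfrak{D}_{t,i}c)$.

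So this is a normalization slip in the statement itself. The paper's ``in the same way as there'' passes over it, and the same factor is dropped between Lemma~\ref{L6.1} and Lemma~\ref{L6.2}. You should repair it explicitly rather than assert it away, in one of two ways:
\begin{itemize}
\item pair with $-\sqrt{-1}\,E_t^{\widetilde{\tau}}(\Pi^t\circ\mathfrak{D}_{t,i}a)$ and keep $\xi$ self-adjoint; or
\item allow $\xi$ to be skew-adjoint.
\end{itemize}
Since multiplication by $\sqrt{-1}$ is isometric, item (i), uniqueness, and the converse then follow verbatim from your completing-the-square argument.
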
 

\subsection{A weak large deviation result} 
Choose a $c = c^* \in \mathbb{C}\langle x\rangle$, and consider the self-adjoint element ${^u c} \in \mathbb{C}\langle x,u\rangle$ and the smooth tracial state $\varphi_{{^u(c)}} \in TS^\omega(C^*_T\langle x,u\rangle)$, which we write $\varphi_c$ instead only here for the ease of notation. Set $\tau_c := {_u(\varphi_c)} \in TS^c(C^*_R\langle x \rangle)$. By the identity \eqref{Eq6.2} and Lemma \ref{L6.2} together with Lemma \ref{L3.4} we observe that 
\begin{align*}
\tau_c^T(a) 
= 
\varphi_c^T({^u a}) 
&= 
\sigma_0^\mathrm{frBM}({^u a}) + \int_0^T \sum_{i=1}^n \varphi_c^t(E_t^{\varphi_c^t}({^u\mathfrak{D}}_{t,i} {^u a})E_t^{\varphi_c^t}({^u \mathfrak{D}}_{t,i}{^u c}))\,dt \\ 
&= 
\sigma_0^\mathrm{lib}(a) + \int_0^T \sum_{i=1}^n \tau^t(E_t^{\widetilde{\tau}}(\Pi^t\circ\mathfrak{D}_{t,i} {a})E_t^{\widetilde{\tau}}(\Pi^t\circ\mathfrak{D}_{t,i}{c}))\,dt 
\end{align*}  
holds for every $T > 0$ and any $a \in \mathbb{C}\langle x\rangle$. Therefore, we conclude that 
\begin{equation*} 
I(\tau_c) = \frac{1}{2}\int_0^\infty \sum_{i=1}^n \Vert E_t^{\widetilde{\tau}_c}(\Pi^t\circ\mathfrak{D}_{t,i} c)\Vert_{L^2(\tau_c)}^2\,dt = \frac{1}{2}\int_0^\infty \sum_{i=1}^n \Vert E_t^{\varphi_c^t}({^u\mathfrak{D}}_{t,i} {^u c})\Vert_{L^2(\varphi_c)}^2\,dt = {^u I}(\varphi_c). 
\end{equation*}
With this observation, Theorem \ref{P5.5} and \cite[Theorem 5.8]{Ueda:JOTP19} (or its proof) we obtain the following: 

\begin{corollary}\label{C6.5} 
For any $\tau_c \in TS^c(C^*_R\langle x\rangle)$ with $c=c^* \in \mathbb{C}\langle x \rangle$, 
\[
\lim_{\varepsilon\searrow0} \begin{Bmatrix} \displaystyle \varlimsup_{N\to\infty} \\ \displaystyle \varliminf_{N\to\infty} \end{Bmatrix} \frac{1}{N^2}\log\mathbb{P}(d(\tau_N,\tau_c) < \varepsilon) = -I(\tau_c)
\]
holds. 
\end{corollary}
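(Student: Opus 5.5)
The lower bound comes from the contraction (pull-back) structure established above, the upper bound from the known large deviation upper bound for $\tau_N$.

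\emph{Upper bound.} By \cite{Ueda:JOTP19}, $\mathbb{P}(\tau_N\in\cdot\,)$ satisfies a large deviation upper bound with the good rate function $I=I^{\mathrm{lib}}_{\sigma_0}$. Applied to the closed ball $\{d(\cdot,\tau_c)\le\varepsilon\}$, and using the lower semicontinuity of $I$, this gives
\[
\lim_{\varepsilon\searrow0}\varlimsup_{N\to\infty}\frac{1}{N^2}\log\mathbb{P}(d(\tau_N,\tau_c)<\varepsilon)\le -I(\tau_c).
\]

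\emph{Lower bound.} We use $\tau_N={_u\varphi}_{(X_N,U_N)}$ and the continuity of the pull-back map $\varphi\mapsto{_u\varphi}$ from $TS^c(C^*_R\langle x,u\rangle)$ to $TS^c(C^*_R\langle x\rangle)$.

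1. Fix $\varepsilon>0$. By continuity at $\varphi_c=\varphi_{{^uc}}$, there is $\delta>0$ such that $d(\varphi,\varphi_c)<\delta$ implies $d({_u\varphi},\tau_c)<\varepsilon$.

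2. Hence
\[
\mathbb{P}(d(\tau_N,\tau_c)<\varepsilon)\ge\mathbb{P}(d(\varphi_N,\varphi_c)<\delta).
\]

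3. Proposition \ref{P5.5}, applied to ${^uc}$, bounds the right-hand side. For every $\delta>0$ the quantity $\varliminf_N N^{-2}\log\mathbb{P}(d(\varphi_N,\varphi_c)<\delta)$ is nondecreasing in $\delta$, and its limit as $\delta\searrow0$ is at least $-{^uI}(\varphi_c)$. So each fixed $\delta$ already gives the bound $\ge -{^uI}(\varphi_c)$.

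4. By the computation just before the corollary, ${^uI}(\varphi_c)=I(\tau_c)$. This yields
\[
\varliminf_{N\to\infty}\frac{1}{N^2}\log\mathbb{P}(d(\tau_N,\tau_c)<\varepsilon)\ge -I(\tau_c)
\]
for every $\varepsilon$, hence also in the limit $\varepsilon\searrow0$.

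The main point to verify is the identity $I(\tau_c)=\frac12\int_0^\infty\sum_i\|E_t^{\widetilde\tau_c}(\Pi^t\circ\mathfrak{D}_{t,i}c)\|^2\,dt$. I would obtain it from the final clause of Theorem \ref{T6.4}, applied with $\xi_i(t)=E_t^{\widetilde{\tau}_c}(\Pi^t\circ\mathfrak{D}_{t,i}c)$. Here $\xi$ is self-adjoint, square integrable (it is compactly supported by Corollary \ref{C3.3}), and satisfies item (ii) by the displayed computation via \eqref{Eq6.2} and Lemma \ref{L6.2}.

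The obstacle is that Theorem \ref{T6.4} requires $\xi\in\mathcal{H}^{\mathrm{lib}}(\tau_c)$. The plan is to show that $\xi$ equals $\sqrt{-1}$ times one of the functions in \eqref{Eq6.5}; the factor $\sqrt{-1}$ here just undoes the $-\sqrt{-1}$ that \eqref{Eq6.5} builds in. By Lemma \ref{L6.1}, the identities \eqref{Eq6.1}, and the fact that the $[\,\cdot\,,x_{ij}(t)]$ form is preserved by $E_t$ (as in Lemma \ref{L6.3}), that function lies in $\mathcal{H}^{\mathrm{lib}}(\tau_c)^{\mathrm{sa}}$. If the factor $-\sqrt{-1}$ in \eqref{Eq6.5} is an issue, the convention in (ii) should be adjusted consistently, and the same argument goes through.

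Combining the upper and lower bounds gives the displayed equality.
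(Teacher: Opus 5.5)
Your proposal follows the paper's proof: the upper bound is quoted from \cite[Theorem 5.8]{Ueda:JOTP19}, and the lower bound comes from continuity of $\varphi\mapsto{_u\varphi}$, Proposition \ref{P5.5} applied to ${^uc}$, and the identity ${^uI}(\varphi_c)=I(\tau_c)$ obtained just before the corollary via the last clause of Theorem \ref{T6.4}. One correction to your side remark: for $c=c^*$ the function $E_t^{\widetilde{\tau}_c}(\Pi^t\circ\mathfrak{D}_{t,i}c)$ is skew-adjoint rather than self-adjoint (as you note, it is $\sqrt{-1}$ times the self-adjoint function \eqref{Eq6.5} with $a=c$), so the self-adjoint drift to feed into Theorem \ref{T6.4} is $-\sqrt{-1}\,E_t^{\widetilde{\tau}_c}(\Pi^t\circ\mathfrak{D}_{t,i}c)$, paired in item (ii) against the functions \eqref{Eq6.5}; this phase change leaves every norm, and hence $I(\tau_c)={^uI}(\varphi_c)$, unchanged.
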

\begin{proof} 
The upper bound was established for all $\tau \in TS^c(C^*_R\langle x\rangle)$ in \cite[Theorem 5.8]{Ueda:JOTP19}. 
 
Since $\varphi \mapsto {_u\varphi}$ is continuous and ${_u(\varphi_N)} = \tau_N$, ${_u(\varphi_c)} = \tau_c$, we observe that
\[
\varliminf_{N\to\infty}\frac{1}{N^2}\log\mathbb{P}(d(\tau_N,\tau_c)<\varepsilon) 
\geq \lim_{\delta\searrow0}\varliminf_{N\to\infty}\log\mathbb{P}(d(\varphi_N,\varphi_c)<\delta) \geq -{^u I}(\varphi_c) = -I(\tau_c)
\]
holds for every $\varepsilon>0$ thanks to Theorem \ref{P5.5}.  
\end{proof} 

\begin{remark}\label{R6.6} {\rm 
The preceding discussion leads to the following observation: Fix $\varphi \in TS^c(C^*_R\langle x,u\rangle)$ such that $I(\varphi) < +\infty$, and let $\xi \in \mathcal{H}(\varphi)$ be as in Theorem \ref{T3.12}. The mapping $\eta  \mapsto {^u\eta}$ from $L^2([0,+\infty); L^2({_u\varphi}))$ to $L^2([0,+\infty);L^2(\varphi))$, defined by $({^u \eta})(t) := {^u (\eta(t))}$, $t \geq 0$, is an isometry. It is not difficult to verify that the image of $\mathcal{H}^\mathrm{lib}({_u\varphi})$ under this map is contained in $\mathcal{H}(\varphi)$. Consequently, there exists a unique $\xi^\mathrm{lib} \in \mathcal{H}^\mathrm{lib}({_u\varphi})$ such that ${^u(\xi^\mathrm{lib})}$ is the orthogonal projection of $\xi$ to the image of $\mathcal{H}^\mathrm{lib}({_u\varphi})$. For any $a \in \mathbb{C}\langle x \rangle$, we have 
\begin{align*} 
({_u\varphi})^T(a) 
= 
\varphi^T({^u a}) 
&= 
\sigma_0^\mathrm{frBM}({^u a}) + (t\mapsto(\mathbf{1}_{[0,T)}E_t^{\varphi^t}({^u\mathfrak{D}}_{t,i}{^u a}))_{i=1}^n\,|\, \mathbf{1}_{[0,T)}\xi)_{L^2} \\
&= 
\sigma_0^\mathrm{frBM}({^u a}) + (t\mapsto(\mathbf{1}_{[0,T)}{^u(E_t^{{_u\widetilde{\varphi}}}(\Pi^t\circ\mathfrak{D}_{t,i}a))})_{i=1}^n\,|\, \mathbf{1}_{[0,T)}\xi)_{L^2} 
\qquad \text{(by Lemma \ref{L6.2})} \\
&= 
\sigma_0^\mathrm{frBM}({^u a}) + (t\mapsto(\mathbf{1}_{[0,T)}{^u(E_t^{{_u\widetilde{\varphi}}}(\Pi^t\circ\mathfrak{D}_{t,i}a))})_{i=1}^n\,|\, \mathbf{1}_{[0,T)}{^u(\xi^\mathrm{lib})})_{L^2} \\
&= 
\sigma_0^\mathrm{lib}(a) + (t\mapsto(\mathbf{1}_{[0,T)} E_t^{{_u\widetilde{\varphi}}}(\Pi^t\circ\mathfrak{D}_{t,i}a))_{i=1}^n\,|\, \mathbf{1}_{[0,T)}\xi^\mathrm{lib})_{L^2} \\
&= 
\sigma_0^\mathrm{lib}(a) 
+ \int_0^T \sum_{i=1}^n {_u\varphi}(E_t^{{_u\widetilde{\varphi}}}(\Pi^t\circ\mathfrak{D}_{t,i}a)\xi^\mathrm{lib}(t))\,dt 
\end{align*}   
Hence, by Theorem \ref{T6.4} we conclude that 
\[ 
I({_u\varphi}) = \frac{1}{2}\Vert\xi^\mathrm{lib}\Vert_{L^2}^2 \leq \frac{1}{2}\Vert\xi\Vert_{L^2}^2 = {^u  I}(\varphi),
\]
where the equality holds if and only if $\xi$ belongs to the image of $\mathcal{H}^\mathrm{lib}({_u\varphi})$ in $\mathcal{H}(\varphi)$.}
\end{remark}  

\appendix 

\section{Non-commutative $L^2$-spaces and affiliated operators} \label{noncomm_L^2}

Let $A$ be a unital $C^*$-algebra and $\tau$ be a tracial state on it. We have the $L^2$-space $L^2(\tau)$, which is the separation/completion of $A$ with respect to the pre-inner product $(a,b) \mapsto \tau(ab^*)$ ($= \tau(b^* a)$).  Following standard analytical practice, we identify $a \in A$ in $L^2(\tau)$ with its canonical image in $L^2(\tau)$. 

Owing to the trace property, the adjoint operation $a \mapsto a^*$ on $A$ extends to $L^2(\tau)$ as an anti-unitary operator. We denote the extension by $\xi \mapsto \xi^*$ for $\xi \in L^2(\tau)$. It is easily verified that $(\xi|\eta)_{L^2(\tau)} = (\eta^*|\xi^*)_{L^2(\tau)}$ holds for every pair $\xi,\eta \in L^2(\tau)$.  

Furthermore, the trace property ensures that left and right multiplications of $A$ on itself extends continuously to $L^2(\tau)$. This endows $L^2(\tau)$ with an $A$-$A$ bimodule structure, $(a,\xi,b) \in A \times L^2(\tau) \times A \mapsto a\xi b \in L^2(\tau)$. One can confirm the identity $(a\xi b)^* = b^*\xi^* a^*$. The images of $A$ under these left and the right actions are denoted by $\mathcal{L}(A)$ and $\mathcal{R}(A)$, respectively. 

We define the $L^\infty$-norm of $\xi \in L^2(\tau)$:  
\[
\Vert \xi\Vert_{L^\infty(\tau)} := \sup\{\Vert\xi a\Vert_{L^2(\tau)}; a \in A,\ \Vert a\Vert_{L^2(\tau)} = \tau(aa^*)^{1/2} \leq 1\} \in [0,+\infty], 
\]  
and the $L^\infty$-space as the subspace of elements with finite $L^\infty$-norm. Moreover, $L^\infty(\tau)$ is closed under the adjoint operation as follows. For any $\xi \in L^\infty(\tau)$ and $a \in A$, 
\begin{align*}
\Vert \xi^* a \Vert_{L^2(\tau)} 
&= 
\sup\{ |(\xi^* a|b)_{L^2(\tau)}|; b \in A,\ \Vert b\Vert_{L^2(\tau)} \leq 1\} \\
&= 
\sup\{ |(a|\xi b)_{L^2(\tau)}|; b \in A,\ \Vert b\Vert_{L^2(\tau)} \leq 1\} \\
&\leq 
\Vert a\Vert_{L^2(\tau)} \Vert \xi \Vert_{L^\infty(\tau)},
\end{align*}   
where the last inequality follows from the Cauchy--Schwarz inequality. This implies $\xi^* \in L^\infty(\tau)$ and $\Vert \xi^*\Vert_{L^\infty(\tau)} \leq \Vert\xi\Vert_{L^\infty(\tau)}$. By symmetry, $\Vert\xi^*\Vert_{L^\infty(\tau)} = \Vert\xi\Vert_{L^\infty(\tau)}$, confirming that the adjoint is an isometry on $L^\infty(\tau)$ as well. 

By construction, each $\xi \in L^\infty(\tau)$ defines a unique bounded operator $L_\xi$ on $L^2(\tau)$ such that $L_\xi a = \xi a$ for all $a \in A$. The operator norm $\Vert L_\xi\Vert$ coincides with $\Vert\xi\Vert_{L^\infty(\tau)}$. 

The next lemma is standard, and its detailed proof is left to the reader. 

\begin{lemma}\label{L-A.1}
The following hold ture: 
\begin{itemize}
\item[(1)] $(L_\xi)^* = L_{\xi^*}$ and $(L_\xi \eta)^* = (L_\eta)^*\xi^*$ for any $\xi,\eta \in L^\infty(\tau)$.  
\item[(2)] For any pair $\xi,\eta \in L^\infty(\tau)$, the $\xi\eta := L_\xi\eta$ falls in $L^\infty(\tau)$ and $\Vert\xi\eta\Vert_{L^\infty(\tau)} \leq \Vert\xi\Vert_{L^\infty(\tau)}\Vert\eta\Vert_{L^\infty(\tau)}$ holds. 
\item[(3)] $(\xi\eta)\zeta = \xi(\eta\zeta)$ and $(\xi\eta)^*=\eta^*\xi^*$ hold for any $\xi,\eta,\zeta\in L^\infty(\tau)$. In particular, the operation $(\xi,\eta)\in L^\infty(\tau)^2\mapsto \xi\eta\in L^\infty(\tau)$ makes $L^\infty(\tau)$ a $*$-algebra. 
\item[(4)] $L^\infty(\tau)$ is a unital $C^*$-algebra with the $L^\infty$-norm.
\item[(5)] $\xi \in L^\infty(\tau) \mapsto L_\xi \in B(L^2(\tau))$ is an injective unital $*$-representation, and $L_a \xi = a\xi$ holds for any $a \in A$ and $\xi \in L^2(\tau)$, where the $a$ of $L_a$ is regarded as an element of $A$ in $L^2(\tau)$. 
\end{itemize}
\end{lemma}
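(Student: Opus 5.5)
We prove the five items of Lemma \ref{L-A.1} in the order listed, relying only on the trace property, the definition of the $L^\infty$-norm, and the density of $A$ in $L^2(\tau)$.

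For (1), compute on the dense subspace $A$. For $a,b \in A$ we have $(L_\xi a|b) = \tau(\xi a b^*)$, which requires care when $\xi$ is merely in $L^2(\tau)$. So we write $(\xi a|b)_{L^2(\tau)} = (\xi|ba^*)_{L^2(\tau)}$. This identity holds by continuity in $\xi$ from the case $\xi \in A$, since $\eta\mapsto \eta a$ is bounded on $L^2(\tau)$ and has adjoint $\eta\mapsto\eta a^*$ by the trace property. Using $(\xi|\eta) = (\eta^*|\xi^*)$, we get $(\xi|ba^*) = (ab^*|\xi^*) = (a|\xi^*b)$, where $(ab^*|\xi^*) = (a|\xi^* b)$ follows from the same continuity argument. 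This gives $L_\xi^* = L_{\xi^*}$ on $A$, and hence everywhere by boundedness. The identity $(L_\xi\eta)^* = (L_\eta)^*\xi^*$ (read as $L_{\eta^*}\xi^*$, i.e.\ $(\xi\eta)^* = \eta^*\xi^*$) is proved the same way. First approximate $\eta$ by $b_k\in A$, so that $(\xi b_k)^* = b_k^*\xi^*$ holds by the bimodule identity $(a\xi b)^* = b^*\xi^*a^*$. Then pass to the limit, using that $L_\xi$ is bounded, that the adjoint map is isometric, and that $b_k^*\xi^* = L_{b_k^*}\xi^* \to L_{\eta^*}\xi^*$. This last convergence is the one delicate point: it needs $\xi^* \in L^\infty(\tau)$, which was shown above, so that right multiplication by $\xi^*$ is bounded on $L^2(\tau)$.

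For (2) and (3), the key intermediate fact is $L_{\xi\eta} = L_\xi L_\eta$. Evaluating on $a\in A$, we need $(\xi\eta)a = \xi(\eta a)$, where the left side is $L_{\xi\eta}a$ and, initially, $\xi\eta\in L^2(\tau)$ only. Since right multiplication $R_a$ by $a \in A$ is bounded on $L^2(\tau)$ and commutes with $L_\xi$ (check on $A$ via associativity and extend by continuity), we have $(L_\xi\eta)a = R_aL_\xi\eta = L_\xi R_a\eta = \xi(\eta a)$. Hence $\Vert(\xi\eta)a\Vert_2 \le \Vert\xi\Vert_\infty\Vert\eta\Vert_\infty\Vert a\Vert_2$, which yields (2) and $L_{\xi\eta}=L_\xi L_\eta$. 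Associativity then follows from injectivity of $\xi\mapsto L_\xi$, which holds because $L_\xi 1 = \xi$. The identity $(\xi\eta)^*=\eta^*\xi^*$ was obtained in (1).

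For (4) and (5), the map $\xi\mapsto L_\xi$ is an injective, isometric, unital $*$-homomorphism by (1)--(3). Its range is closed in operator norm: if $L_{\xi_k}$ is Cauchy, then $\xi_k = L_{\xi_k}1$ converges in $L^2(\tau)$ to some $\xi$, and $\xi a = \lim \xi_k a = \lim L_{\xi_k}a$ shows that $\xi\in L^\infty(\tau)$ with $L_{\xi_k}\to L_\xi$. Therefore $L^\infty(\tau)$ is a $C^*$-algebra, being isometrically $*$-isomorphic to a norm-closed $*$-subalgebra of $B(L^2(\tau))$. Finally, $L_a\xi = a\xi$ holds on $A$ by definition and extends by continuity of both sides in $\xi$. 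The main obstacle throughout is the bookkeeping of the two continuity extensions, namely the boundedness of right multiplication by elements of $L^\infty(\tau)$ and not merely by elements of $A$; this is settled by the adjoint-closedness argument given just before the lemma.
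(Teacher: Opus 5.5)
The paper gives no proof of this lemma (it calls it standard and leaves it to the reader), so there is no route to compare against. Most of your argument is fine as written:
\begin{itemize}
\item the first identity in (1);
\item the multiplicativity $L_{\xi\eta}=L_\xi L_\eta$ with the norm bound in (2);
\item associativity via injectivity of $\xi\mapsto L_\xi$;
\item the closed-range argument in (4) and (5).
\end{itemize}

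\textbf{The gap.} It is in the second identity of (1), i.e.\ $(\xi\eta)^*=\eta^*\xi^*$, which you later reuse in (3). Boundedness of right multiplication by $\xi^*$ gives you this: the map $c\mapsto c\xi^*$ on $A$ satisfies $\Vert c\xi^*\Vert_{L^2(\tau)}=\Vert \xi c^*\Vert_{L^2(\tau)}\le\Vert\xi\Vert_{L^\infty(\tau)}\Vert c\Vert_{L^2(\tau)}$. It therefore extends to a bounded operator $R_{\xi^*}$, and $b_k^*\xi^*\to R_{\xi^*}\eta^*$.

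It does not tell you that $R_{\xi^*}\eta^*=L_{\eta^*}\xi^*$. That equality is the commutation $L_{\eta^*}R_{\xi^*}=R_{\xi^*}L_{\eta^*}$ evaluated at $1$. This is a statement about two elements of $L^\infty(\tau)$, neither of which lies in $A$, and you never prove it. Given the rest of your limit argument (which shows $R_{\xi^*}\eta^*=(\xi\eta)^*$), it is in fact equivalent to the identity you want.

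If instead you read ``right multiplication by $\xi^*$'' as $\zeta\mapsto L_\zeta\xi^*$ on $L^\infty(\tau)$, the argument becomes circular. Its $L^2$-boundedness is exactly what one would derive from $(\zeta\xi^*)^*=\xi\zeta^*$. Note that the paper itself only introduces $R_\xi$ on $L^\infty(\tau)$ after this lemma, and by means of it.

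\textbf{The repair.} It uses only what you already have. Prove (2) first. Since $L_\zeta^*=L_{\zeta^*}$ for every $\zeta\in L^\infty(\tau)$, in particular for $\zeta=\xi\eta$, you get $L_{(\xi\eta)^*}=(L_\xi L_\eta)^*=L_{\eta^*}L_{\xi^*}=L_{\eta^*\xi^*}$. Applying both sides to $1$ gives $(\xi\eta)^*=\eta^*\xi^*$.

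Alternatively, test against $c\in A$:
\[
((\xi\eta)^*|c)=(c^*|L_\xi\eta)=(L_{\xi^*}c^*|\eta)=(\xi^*c^*|\eta)=(\xi^*|L_\eta c)=(L_{\eta^*}\xi^*|c).
\]
The fourth equality uses only that right multiplication by $c^*\in A$ on $L^2(\tau)$ has adjoint right multiplication by $c$. Since $A$ is dense, the identity follows.

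With this substitution, the rest of your proof goes through, including the $*$-algebra claim in (3) and the $C^*$-structure in (4).
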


By the preceding lemma, each $\xi \in L^\infty(\tau)$ also defines a unique bounded operator $R_\xi$ on $L^2(\tau)$ such that $R_\xi \eta = \eta\xi$ for all $\eta \in L^\infty(\tau)$. Consistent with our previous notation, the map $\xi \mapsto R_\xi$ represents the extension of the right action of $A$ on $L^2(\tau)$. 

We denote the families of these operators by $\mathcal{L}(L^\infty(\tau)) := \{ L_\xi; \xi \in L^\infty(\tau)\}$ and $\mathcal{R}(L^\infty(\tau)) := \{ R_\xi; \xi \in L^\infty(\tau)\}$, which naturally contain $\mathcal{L}(A)$ and $\mathcal{R}(A)$, respectively. 

\begin{proposition}\label{P-A.2} 
The following identities hold: 
\[
\mathcal{L}(A)'' =  \mathcal{L}(L^\infty(\tau)) = \mathcal{R}(L^\infty(\tau))', \qquad \mathcal{R}(A)'' = \mathcal{R}(L^\infty(\tau)) = \mathcal{L}(L^\infty(\tau))',
\] 
where the prime ($\prime$) denotes the commutant in $B(L^2(\tau))$. In particular, both $\mathcal{L}(L^\infty(\tau))$ and $\mathcal{R}(L^\infty(\tau))$ are von Neumann algebras. Furthermore, $\tau$ extends uniquely to these algebras as a faithful normal tracial state, which we continue to denote by the same symbol $\tau$.
\end{proposition}
\begin{proof} 
For any $\xi,\eta,\zeta \in L^\infty(\tau)$ we have 
$L_\xi R_\eta = R_\eta L_\xi$. It follows that $\mathcal{L}(L^\infty(\tau)) \subseteq \mathcal{R}(L^\infty(\tau))'$ and $\mathcal{R}(L^\infty(\tau)) \subseteq \mathcal{L}(L^\infty(\tau))'$.   

To see the reverse inclusion, choose an $X' \in \mathcal{L}(A)'$ and set $\xi := X'1 \in L^2(\tau)$. Then, for all $a \in A$ we have $a\xi = a\xi  = L_a X'1 = X' L_a 1 = X' a$ in $L^2(\tau)$. This implies that $\xi \in L^\infty(\tau)$ and $R_\xi a = X'a$ for all $a \in A$. Hence $X' = R_\xi \in \mathcal{R}(L^\infty(\tau))$; hence $\mathcal{L}(A)' \subseteq \mathcal{R}(L^\infty(\tau))$. In the same way, we obtain $\mathcal{R}(A)' \subset \mathcal{L}(L^\infty(\tau))$. 

Consequently, $\mathcal{L}(L^\infty(\tau))' \subseteq \mathcal{L}(A)' \subseteq \mathcal{R}(L^\infty(\tau)) \subseteq \mathcal{L}(L^\infty(\tau))'$, and hence $\mathcal{L}(L^\infty(\tau))' = \mathcal{L}(A)' = \mathcal{R}(L^\infty(\tau))$. Similarly, $\mathcal{R}(L^\infty(\tau))' = \mathcal{R}(A)' = \mathcal{L}(L^\infty(\tau))$.    

Furthermore, for any $\xi, \eta \in L^\infty(\tau)$ the tracial property holds:  
\[
(L_\xi L_\eta 1|1)_{L^2(\tau)} = (\eta|\xi^*)_{L^2(\tau)} = (\xi|\eta^*)_{L^2(\tau)} = (L_\eta L_\xi 1|1)_{L^2(\tau)}. 
\]    
Hence, the vector state $(\,\cdot\,1|1)_{L^2(\tau)}$ gives a tracial state on $\mathcal{L}(L^\infty(\tau))$, and similarly on $\mathcal{R}(L^\infty(\tau))$. Since $1 \in A$ is clearly cyclic for both algebras, this tracial vector state is faithful. Its normality is trivial. Finally, since $(L_a 1|1)_{L^2(\tau)} = (R_a 1|1)_{L^2(\tau)} = \tau(a)$ for every $a \in A$, this tracial vector state is a normal extension of $\tau$. 
\end{proof} 

Since the mappings $\xi \mapsto L_\xi$ and $\eta \mapsto  R_\eta$ commute, they induce  an $L^\infty(\tau)$-bimodule structure on $L^2(\tau)$ given by $\xi\zeta\eta := L_\xi R_\eta \zeta = R_\eta L_\xi \zeta$ for all $\xi,\eta \in L^\infty(\tau)$ and $\zeta \in L^2(\tau)$. Furthermore, $\xi \mapsto L_\xi$ is an injective $*$-homomorphism from $L^\infty(\tau)$ onto $\mathcal{L}(L^\infty(\tau))$; thus, $L^\infty(\tau)$ is a $W^*$-algebra. On this algebra, the linear functional $\xi \mapsto (\xi|1)_{L^2(\tau)}$ defines a tracial state that extends the original $\tau$. Hereafter, we continue to denote this tracial state on $L^\infty(\tau)$ by the same symbol $\tau$.    

\medskip
For a general element $\xi \in L^2(\tau)$, we consider the densely defined operator $L^0_\xi : A \subset L^2(\tau) \to L^2(\tau)$ defined by $L^0_\xi a := \xi a$ for every $a \in A$. Using the trace property, we observe that for any $a,b \in A$:   
\[
(L^0_\xi a|b)_{L^2(\tau)} = (\xi a|b)_{L^2(\tau)} = (b^*|a^*\xi^*)_{L^2(\tau)} = (ab^*|\xi^*)_{L^2(\tau)} = (a|\xi^* b)_{L^2(\tau)} = (a|L^0_{\xi^*}b)_{L^2(\tau)}.
\]
This calculation shows that $L^0_\xi$ is closable, and we denote its closure by $L_\xi$. Here is an important fact, which was used in the proof of Lemma \ref{L4.10}. 

\begin{proposition}\label{P-A.3} 
(1) For every $\xi \in L^2(\tau)$ the operator $L_\xi$ is affiliated with the von Neumann algebra $\mathcal{L}(L^\infty(\tau))$. 
 
(2) For $\xi,\eta \in L^2(\tau)$ and $\alpha,\beta \in \mathbb{C}$, we have:  
\[
\overline{\alpha L_\xi + \beta L_\eta} = L_{\alpha\xi + \beta\eta}, \qquad (L_\xi)^* = L_{\xi^*}. 
\]
In particular, if $\xi = \xi^*$, then $L_\xi$ is self-adjoint and admits a spectral decomposition.   
\end{proposition}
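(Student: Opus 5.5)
We plan to prove (1) first via the commutant characterization of affiliation, and then obtain (2) from the closability identity already displayed just before the statement.

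\textbf{Proof of (1).} Recall that a closed densely defined operator is affiliated with $\mathcal{L}(L^\infty(\tau))$ if and only if it commutes with every unitary in $\mathcal{L}(L^\infty(\tau))' = \mathcal{R}(L^\infty(\tau))$ (Proposition \ref{P-A.2}). So it suffices to show that
\[
R_\eta L_\xi \subseteq L_\xi R_\eta
\]
for every $\eta \in L^\infty(\tau)$, or at least for every unitary $\eta$.

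We begin on the core $A$. For $a,b \in A$ we have $R_b L^0_\xi a = \xi ab = L^0_\xi R_b a$, since $R_b a = ab \in A$. Passing to closures gives $R_b L_\xi \subseteq L_\xi R_b$ for $b \in A$.

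To reach general $\eta \in L^\infty(\tau)$, we approximate. By Kaplansky density, applied via Proposition \ref{P-A.2} to $\mathcal{R}(A)'' = \mathcal{R}(L^\infty(\tau))$, there is a bounded net $b_\lambda \in A$ with $R_{b_\lambda} \to R_\eta$ strongly. Now take $\zeta \in \mathrm{dom}(L_\xi)$. Then $R_{b_\lambda}\zeta \to R_\eta\zeta$, and
\[
L_\xi R_{b_\lambda}\zeta = R_{b_\lambda}L_\xi\zeta \to R_\eta L_\xi \zeta.
\]
Since $L_\xi$ is closed, this yields $R_\eta\zeta \in \mathrm{dom}(L_\xi)$ and $L_\xi R_\eta \zeta = R_\eta L_\xi\zeta$. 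This completes (1).

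\textbf{Proof of (2).} The displayed identity before the statement shows $L^0_{\xi^*} \subseteq (L^0_\xi)^* = (L_\xi)^*$, hence $L_{\xi^*} \subseteq (L_\xi)^*$. The reverse inclusion is the main obstacle.

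To get it, let $\zeta \in \mathrm{dom}((L_\xi)^*)$ and set $\omega = (L_\xi)^*\zeta$. The goal is to show $\zeta \in \mathrm{dom}(L_{\xi^*})$, which needs an approximation of $\zeta$ from $A$ along which $\xi^*$ times the approximants converges. We will use the standard finite-trace trick.
\begin{itemize}
\item Use the spectral projections $e_n$ of $|\zeta|$ (or of the closed operator $L_\zeta$, which is affiliated by (1)). These lie in $\mathcal{L}(L^\infty(\tau))$, tend to $1$ strongly, and satisfy $\zeta e_n \in L^\infty(\tau)$.
\item Take right cut-offs $R_{e_n}$, which commute with both $L_\xi$ and $(L_\xi)^*$ by (1).
\item The vectors $\zeta e_n$ are bounded, and $\xi^*(\zeta e_n)$ is computable using the identity $(L_\xi)^*\zeta e_n = \omega e_n$.
\item Approximate each bounded vector $\zeta e_n$ from $A$ by Kaplansky density.
\end{itemize}
Along this approximation $\xi^*(\text{approximants}) \to \omega e_n$. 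Letting $n\to\infty$ then gives $\zeta \in \mathrm{dom}(L_{\xi^*})$, which establishes $(L_\xi)^* = L_{\xi^*}$.

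For the sum formula, we use the same tool: the Murray--von Neumann fact that affiliated closed operators in a finite von Neumann algebra form a $*$-algebra under strong sum. Then $\alpha L_\xi + \beta L_\eta$ is closable, its closure is affiliated, and it agrees with $L_{\alpha\xi+\beta\eta}$ on the common core $A$. Two affiliated closed operators that agree on a core of one of them must coincide, since in the finite setting any closed affiliated extension is maximal.

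Finally, if $\xi = \xi^*$, then $(L_\xi)^* = L_{\xi^*} = L_\xi$, so $L_\xi$ is self-adjoint and the spectral theorem applies.
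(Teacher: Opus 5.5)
Your proposal is correct and follows the paper's route. For (1), the paper also approximates an element of the commutant $\mathcal{R}(L^\infty(\tau))$ strongly by right multiplications by elements of $A$ and concludes by closedness of $L_\xi$; you additionally make explicit the passage from the core $A$ to the full domain, which the paper leaves as a ``simple consideration''. For (2), the paper only cites the standard Murray--von Neumann theory of closed operators affiliated with a finite von Neumann algebra, which is exactly what you invoke for the sum formula.

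Your spectral cut-off argument for $(L_\xi)^* \subseteq L_{\xi^*}$ is valid but redundant. Since $(L_\xi)^*$ is closed, densely defined and affiliated, and it extends $L_{\xi^*}$, the maximality principle you already use for the sum gives $(L_\xi)^* = L_{\xi^*}$ directly. Even inside your argument the Kaplansky step can be skipped: (1) applied at the vector $1$ already puts every element of $L^\infty(\tau)$, such as $\zeta e_n$, in the domain of $L_{\xi^*}$.
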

\begin{proof}
(1) One has to confirm that $u' L_\xi u'{}^* = L_\xi$ for every $\xi \in L^2(\tau)$ and every unitary operator $u' \in \mathcal{L}(L^\infty(\tau))'$. See \cite[section 9.7]{StratilaZsido:Book}. By a simple consideration, it suffices to prove that $Y' L^0_\xi \subseteq L_\xi Y'$ holds for every $\xi \in L^2(\tau)$ and every $Y' \in \mathcal{L}(L^\infty(\tau))'$. 

By Proposition \ref{P-A.2} with the von Neumann Double Commutatnt Theorem, there exists a sequence $y_n \in A$ such that $R_{y_n} \to Y'$ in the strong operator topology on $L^2(\tau)$. For each $a \in A$ we observe that $ay_n \to R_{y_n}a = Y' a$ and $L_\xi ay_n = \xi ay_n = R_{y_n} \xi a \to Y' \xi a = Y' L^0_\xi a$ as $n\to\infty$. Since $L_\xi$ is closed, it follows that $Y' a$ belongs to the domain of $L_\xi$ and $L_\xi Y' a = Y' L^0_\xi a$. 

(2) This follows directly from standard results on von Neumann algebras \cite[section 9.8 and E.9.26]{StratilaZsido:Book}, as $\mathcal{L}(L^\infty(\tau))$ is finite by the virtue of the tracial state.
\end{proof}

For any $\xi=\xi^* \in L^2(\tau)$ and $z \in \mathbb{C}\setminus\mathbb{R}$, the operator $L_{z1-\xi}=z1-L_\xi$ has a (unique) bounded inverse belonging to $\mathcal{L}(L^\infty(\tau))$. Consequently, there exists a unique $\xi_z \in L^\infty(\tau)$ such that $L_{z1-\xi}^{-1}=L_{\xi_z}$. 

For another $\eta = \eta^* \in L^2(\tau)$, we obtain the resolvent identity: 
\begin{align*}
\xi_z - \eta_z
= 
L_{\xi_z - \eta_z}1 
&=
((z1 - L_\xi)^{-1}-(z1-L_\eta)^{-1})1 \\
&=
((z1 - L_\xi)^{-1}(L_\xi - L_\eta)(z1-L_\eta)^{-1})1 \\
&=
L_{\xi_z}(L_\xi-L_\eta)L_{\eta_z}1 \\
&= 
L_{\xi_z}L_{\xi-\eta}R_{\eta_z}1 \\
&=
L_{\xi_z}R_{\eta_z}L_{\xi-\eta}1 \qquad \text{(by Proposition \ref{P-A.3}(1))} \\
&=
\xi_z(\xi-\eta)\eta_z. 
\end{align*}
Similarly, for another $z' \in \mathbb{C}\setminus\mathbb{R}$: 
\begin{align*}
\xi_z - \xi_{z'} 
&= 
L_{\xi_z-\xi_{z'}}1 \\
&=
((z1-L_\xi)^{-1}-(z'1-L_\xi)^{-1})1 \\
&=
(z'-z)(z1-L_\xi)^{-1}(z'1-L_\xi)^{-1}1 \\
&=
(z'-z)L_{\xi_z}L_{\xi_{z'}}1 
=
L_{(z'-z)\xi_z\xi_{z'}}1 
=
(z'-z)\xi_z\xi_{z'}. 
\end{align*}
By identifying $\xi_z$ with the formal expression $(z1-\xi)^{-1}$, all the calculations in the proof of Lemma \ref{L4.10} are rigorously justified. 

\medskip
Let $B$ be a unital $*$-subalgebra of $A$, and denote by $\tau_B$ the restriction of $\tau$ to $B$. From $\tau_B$, we obtain the $L^2$- and $L^\infty$-spaces, $L^2(\tau_B) \supseteq L^\infty(\tau_B)$, as well as the von Neumann algebra $\mathcal{L}(L^\infty(\tau_B))$ acting on $L^2(\tau_B)$. Let $E_B$ be the orthogonal projection from $L^2(\tau)$ onto $L^2(\tau_B)$, where $L^2(\tau_B)$ is naturally identified with closure of $B$ in $L^2(\tau)$. 

The inclusion $L^\infty(\tau_B) \subseteq L^\infty(\tau)$ is somewhat non-trivial, and can be shown as follows. It is readily observed that $E_B$ belongs to the commutant $\mathcal{L}(B)'$ of the collection $\mathcal{L}(B)$ of all $L_b \in B(L^2(\tau))$ for $b \in B$. Fix $\xi \in L^\infty(\tau_B)$ and assume that the left-multiplication operator $L^B_\xi$ (acting on $L^2(\tau_B)$) is a unitary operator for the time being. By naturally regarding $L^B_\xi$ as an operator on $L^2(\tau)$, we obtain the following identity for any $S_i \in \mathcal{L}(B)'$ and $\eta_i \in L^2(\tau_B) \subseteq L^2(\tau)$: 
\[
\left\Vert \sum_i S_i L_\xi^B \eta_i\right\Vert_{L^2(\tau)}^2 
= 
\sum_{i,j} (E_B S_j^* S_i E_B L^B_\xi \eta_i| L^B_\xi \eta_j)_{L^2(\tau)} 
=   
\left\Vert \sum_i S_i \eta_i\right\Vert_{L^2(\tau)}^2, 
\]
where we use the fact that $E_B S_j^* S_i E_B$ belongs to $E_B \mathcal{L}(B)' E_B = \mathcal{L}(L^\infty(\tau_B))'$. Since $\mathcal{L}(B)' \supseteq \mathcal{L}(A)'$, there exists an isometric operator $V \in B(L^2(\tau))$ such that $V S\eta = S L^B_\xi \eta$ for any $S \in \mathcal{L}(B)'$ and $\eta \in L^2(\tau_B)$. It follows that $V \in \mathcal{L}(B)'' \subseteq \mathcal{L}(A)'' = \mathcal{L}(L^\infty(\tau))$. In particular, there exists an element $\tilde{\xi} \in L^\infty(\tau)$ such that $V = L_{\tilde{\xi}}$. Since $1 \in L^2(\tau_B)$, we conclude that $\tilde{\xi} = V1 = L_\xi^B 1 = \xi$; thus, $\xi$ belongs to $L^\infty(\tau)$. Since any element of a von Neumann algebra is is written as a linear combination of four unitary elements within the same algebra, this conclusion remains valid for a general $\xi \in L^\infty(\tau_B)$. Consequently, we establish the inclusion $L^\infty(\tau_B) \subseteq L^\infty(\tau)$. 

For any $\xi \in L^\infty(\tau)$ and $b \in B$, we observe that 
\[
\Vert (E_B \xi)b\Vert_{L^2(\tau_B)} = \Vert R_b E_B \xi\Vert_{L^2(\tau)} = \Vert E_B R_b  \xi\Vert_{L^2(\tau)} \leq \Vert   \xi b\Vert_{L^2(\tau)} \leq \Vert\xi\Vert_{L^\infty(\tau)} \Vert b \Vert_{L^2(\tau)} = \Vert\xi\Vert_{L^\infty(\tau)} \Vert b \Vert_{L^2(\tau_B)}.  
\]  
This inequality implies that $E_B \xi$ belongs to $L^\infty(\tau_B)$ with the norm estimate $\Vert E_B\xi \Vert_{L^\infty(\tau_B)} \leq \Vert\xi\Vert_{L^\infty(\tau)}$. Consequently, we conclude that $L^\infty(\tau_B) = E_B L^\infty(\tau) = L^2(\tau_B) \cap L^\infty(\tau)$ holds, and that any element of the von Neumann algebra $\mathcal{L}(B)''$ can be represented as $L_\xi$ for some $\xi \in L^\infty(\tau_B)$. Moreover, for any $\xi \in L^\infty(\tau)$ and $\eta \in L^\infty(\tau_B)$ we have $E_B\xi^* = (E_B\xi)^*$, 
$E_B(\xi\eta) = E_B R_\eta \xi = R_\eta E_B\xi = (E_B\xi)\eta$, and 
\[
\tau((E_B\xi)\eta) = ((E_B\xi)\eta|1)_{L^2(\tau)} = (E_B(\xi\eta)|1)_{L^2(\tau)} = (\xi\eta|1)_{L^2(\tau)} = \tau(\xi\eta).
\] 
These identities show that the map $\xi \mapsto E_B\xi$ defines a $\tau$-preserving conditional expectation from $L^\infty(\tau)$ onto $L^\infty(\tau_B)$. The fundamental properties we have provided provide a rigorous justification for all the preceding discussions concerning conditional expectations throughout the main body of this paper. 

\begin{remark}\label{R-A.4}{\rm 
While we have restricted our use of $L^p$-spaces to $p=2,4$ and $\infty$ for technical simplicity, it is well known that the for $p \in [1,2)$ $L^p$-spaces are larger than the $L^2$-space. Thus, it is might be more natural to start with the $L^1$-space $L^1(\tau)$, defined as the separation/completion of $A$ with respect to the $L^1$-seminorm $a \mapsto \tau(|a|)$, and subsequently realize all $L^p(\tau)$ as its subspaces. This construction is straightforward and is left to the interested reader.
}
\end{remark}

}

\end{document}